\newcommand{\N}{\ensuremath{\mathbb N}}
\newcommand{\Z}{\ensuremath{\mathbb Z}}
\newcommand{\R}{\ensuremath{\mathbb R}}
\newcommand{\C}{\ensuremath{\mathbb C}}
\newcommand{\bbh}{\ensuremath{\mathbb H}}
\newcommand{\cald}{\ensuremath{\mathcal D}}
\newcommand{\calo}{\ensuremath{\mathcal O}}
\newcommand{\calp}{\ensuremath{\mathcal P}}
\newcommand{\scrc}{\ensuremath{\mathscr C}}
\newcommand{\scrl}{\ensuremath{\mathscr L}}
\newcommand{\scrr}{\ensuremath{\mathscr R}}
\newcommand{\ghat}{\ensuremath{\widehat{\mathfrak{g}}}}
\newcommand{\qhat}{\ensuremath{\widehat{\mathfrak{q}}}}
\newcommand{\pihat}{\ensuremath{\widehat{\pi}}}
\newcommand{\bfl}{\ensuremath{\boldsymbol{\lambda}}}
\newcommand{\bfk}{\ensuremath{{\bf K}}}
\newcommand{\h}{\ensuremath{{\bf h}}}
\newcommand{\n}{\ensuremath{{\bf n}}}
\newcommand{\I}{\ensuremath{{\bf I}}}
\newcommand{\vg}{\ensuremath{V_{\overline{\mathfrak{g}}}}}
\newcommand{\lie}[1]{\ensuremath{\mathfrak{#1}}}
\newcommand{\one}[1]{\ensuremath{{#1}_{(1)}}}
\newcommand{\two}[1]{\ensuremath{{#1}_{(2)}}}
\newcommand{\scrlug}[1]{\ensuremath{\mathscr{L}^{#1}(\mathfrak{Ug})}}
\newcommand{\comment}[1]{}
\DeclareMathOperator{\wt}{\ensuremath{{wt}\xspace}}
\DeclareMathOperator{\conv}{\ensuremath{{conv}\xspace}}
\DeclareMathOperator{\Lie}{\ensuremath{{Lie}\xspace}}
\DeclareMathOperator{\chr}{\ensuremath{{char}\xspace}}
\DeclareMathOperator{\Sym}{\ensuremath{{Sym}\xspace}}
\DeclareMathOperator{\Hom}{\ensuremath{{Hom}\xspace}}
\DeclareMathOperator{\End}{\ensuremath{{End}\xspace}}
\DeclareMathOperator{\Ann}{\ensuremath{{Ann}\xspace}}
\DeclareMathOperator{\sn}{\ensuremath{{sn}\xspace}}
\DeclareMathOperator{\ad}{\ensuremath{{ad}\xspace}}
\DeclareMathOperator{\Sh}{\ensuremath{{Sh}\xspace}}
\newtheorem{theorem}[equation]{Theorem}
\newtheorem{prop}[equation]{Proposition}
\newtheorem{cor}[equation]{Corollary}
\theoremstyle{definition}
\newtheorem{remark}[equation]{Remark}
\newtheorem{defn}[equation]{Definition}
\newtheorem{ex}[equation]{Example}
\numberwithin{equation}{section}
\begin{document}
\title{Representations of complex semi-simple Lie groups and Lie
algebras}
\author[Apoorva Khare]{Apoorva Khare$^\dagger$}\thanks{$\dagger$Departments of Mathematics and Statistics, Stanford
University, Stanford, CA - 94305, USA}
\thanks{Email: {\tt khare@stanford.edu}}
\thanks{This work was supported in part by DARPA Grant  \# YFA
N66001-11-1-4131.}
\subjclass[2010]{Primary: 22E46; Secondary: 17B20}
\date{\today}

\begin{abstract}
This article is an exposition of the 1967 paper by Parthasarathy, Ranga
Rao, and Varadarajan, on irreducible admissible Harish-Chandra modules
over complex semisimple Lie groups and Lie algebras. It
was written in Winter 2012 to be part of a special collection organized
to mark 10 years and 25 volumes of the series {\em Texts and Readings
in Mathematics} (TRIM). Each article in this collection
is intended to give nonspecialists in its field, an appreciation for the
impact and contributions of the paper being surveyed. Thus, the author
has kept the prerequisites for this article down to a basic course on
complex semisimple Lie algebras.

While it arose out of the grand program of Harish-Chandra on admissible
representations of semisimple Lie groups, the work by Parthasarathy et al
also provided several new insights on highest weight modules and related
areas, and these results have been the subject of extensive research over
the last four decades. Thus, we also discuss its results and follow-up
works on the classification of irreducible Harish-Chandra modules; on the
PRV conjecture and tensor product multiplicities; and on PRV determinants
for (quantized) affine and semisimple Lie algebras.
\end{abstract}
\maketitle

\settocdepth{section}
{\tt \tableofcontents}

Lie groups and Lie algebras occupy a prominent and central place in
mathematics, connecting differential geometry, representation theory,
algebraic geometry, number theory, and theoretical physics. In some
sense, the heart of (classical) representation theory is in the study of
the semisimple Lie groups. Their study is simultaneously simple in its
beauty, as well as complex in its richness. From Killing, Cartan, and
Weyl, to Dynkin, Harish-Chandra, Bruhat, Kostant, and Serre, many
mathematicians in the twentieth century have worked on building up the
theory of semisimple Lie algebras and their universal enveloping
algebras. Books by Borel, Bourbaki, Bump, Chevalley, Humphreys, Jacobson,
Varadarajan, Vogan, and others form the texts for (introductory) graduate
courses on the subject.

The purpose of this article is to provide an exposition of the famous
1967 paper \cite{PRV2} by Parthasarathy, Ranga Rao, and Varadarajan on a
class of irreducible Banach space representations of a complex semisimple
Lie group. This paper was written in a period containing some of the
other classic works in the subject: Harish-Chandra's pioneering work on
the principal series representations, and his results on the annihilators
of simple modules and central characters; Kostant's work on harmonic
polynomials and on his character formula; and papers about Steinberg's
formula and Verma modules, to name a few.

In this article, we attempt to explain some of the key ideas and main
results of \cite{PRV2}. Given the wide variety of new concepts proposed,
as well as its impact on subsequent research in the field, the paper
ranks alongside these other works mentioned above.

\subsection{}

The basic motivation for the paper \cite{PRV2} arose out of the important
works \cite{Har2,Har3} of Harish-Chandra, in which he constructed a large
family of infinite-dimensional irreducible representations of a real
semisimple Lie group $G$. Harish-Chandra generalized the constructions by
Gelfand and Naimark in the case when $G$ is complex semisimple, and his
work is regarded today as a cornerstone in the field. For instance, he
showed how irreducible representations are subquotients of the principal
series representations.

In their work, which followed a few years after \cite{Har2,Har3},
Parthasarathy et al returned to the simpler setting of complex semisimple
Lie groups, where they were able to use Harish-Chandra's results to
obtain a deeper understanding of the structure of Harish-Chandra's Banach
space representations of $G$. Their paper develops a beautiful theory of
such representations, each of which decomposes into finite-dimensional
modules when restricted to the maximal compact subgroup of $G$. The
authors go on to develop the theory of minimal types, and refine
Harish-Chandra's methods (in the complex case) for classifying such
irreducible Banach space representations.

\subsection{}

Although this is the primary motivation for \cite{PRV2}, the paper
develops and proves many other results that have since influenced and
inspired a large body of research in the field. We mention a few of these
here (and elaborate upon them in future sections). First, the authors
provided a multiplicity formula for the classical ``tensor product
decomposition" problem: given two simple finite-dimensional modules over
a complex semisimple Lie algebra, can one write down the decomposition of
their tensor product? Combinatorial results due to Kostant, Sternberg,
and Brauer were known at the time; however, they required double
summations over the Weyl group, computing the Kostant partition function,
and cancelling terms in the summation, which made them increasingly
harder to implement.

In \cite{PRV2}, the authors proposed a formula which was somewhat
simpler, directly involving the tensor factors in question. This formula
has since been widely used in the literature (as we point out in this
article), including in the setting of quantum affine algebras and
symmetrizable Kac-Moody algebras, as well as current algebras and other
semidirect product Lie algebras.

\subsection{}

Next, a byproduct of this ``PRV Theorem" (or formula) was that every such
tensor product contains a unique ``largest" summand (the ``Cartan
component"), and a unique ``smallest" summand (the ``PRV component", or
``minimal type"). The former was well-known to be the sum of the two
highest weights in question, but the latter was new. Subsequently, the
authors and Kostant conjectured the existence of other components, the
so-called ``generalized PRV components". These are simple modules that
occur as direct summands of the tensor product, and their (dominant
integral) highest weights are Weyl group-linear combinations of the
highest weights of the two tensor factors.

This ``PRV Conjecture" has since been proved using multiple techniques in
the semisimple as well as Kac-Moody settings. Moreover, it has inspired
subsequent research that has led to many contributions in understanding
the original problem of computing tensor product multiplicities. Once
again, we will discuss these facts in detail below. Throughout this
article, we will discuss the results in \cite{PRV2} in the special case
of $\lie{sl}_2(\C)$, in order to provide a working example - one which we
hope will give the reader a greater feel for the results being stated.

\subsection{}

We end this introduction with one last application. In \cite{PRV2}, using
Kostant's ``separation of variables" theorem, the authors defined a set
of matrices indexed by pairs of dominant integral weights, whose entries
are polynomials on the Cartan subalgebra. The determinants of these
matrices yield information about the annihilators of Verma modules, and
of their simple quotients. (This is related to the Shapovalov form.)

These ``PRV determinants" have since been widely studied, not just in the
semisimple case, but in the quantum (affine) and the super-reductive
settings as well. In these settings, PRV determinants can be used to
determine whether or not the annihilators of Verma modules are generated
by their intersection with the center.

\section{Notation and preliminaries}

We assume for the purposes of this article that the reader is familiar
with basic results concerning the structure of complex semisimple Lie
algebras; see \cite{H}, for instance. We now set some basic notation,
which also serves as a quick summary of the theory. Given a complex
semisimple Lie algebra $\lie{g}$, fix a Cartan subalgebra $\lie{h}
\subset \lie{g}$ (which is abelian and self-normalizing in $\lie{g}$).
Then $\lie{g}$ has a direct sum decomposition: $\lie{g} = \lie{h} \oplus
\bigoplus_{\alpha \in R} \lie{g}_\alpha$, where $R$ is the set of roots
and $\lie{g}_\alpha$ is the one-dimensional root space for each $\alpha
\in R \subset \lie{h}^*$.
Here, an element $\lambda \in \lie{h}^*$ is called a weight (in
\cite{PRV2} it is called a ``rank"), and if $M$ is an $\lie{h}$-module,
then its $\lambda$-weight space is defined to be:
\[ M_\lambda := \{ m \in M : h \cdot m = \lambda(h) m\ \forall h \in
\lie{h} \}. \]

\noindent The weights of $M$, denoted $\wt(M)$, are those $\lambda \in
\lie{h}^*$ for which $M_\lambda \neq 0$. $M$ is a weight module if $M =
\bigoplus_{\lambda \in \lie{h}^*} M_\lambda$.
For instance, $\lie{g}$ is a $\lie{g}$-module under the adjoint action,
and a weight $\lie{h}$-module. The nonzero weights of $\lie{g}$ are
precisely the roots: $\wt(\lie{g}) = R \coprod \{ 0 \}$.

A simple example to keep in mind is $\lie{g} = \lie{sl}_2(\C)$. This has
a basis $\{ e, f, h \}$ with defining relations:
\[ [h,e] = 2e, \qquad [h,f] = -2f, \qquad [e,f] = h. \]

\noindent Here, $\lie{h} = \C \cdot h$ and $R = \{ \pm \alpha \}$, where
$\alpha(h) = 2$. Thus, $\lie{g}_\alpha = \C \cdot e, \lie{g}_{-\alpha} =
\C \cdot f$.

\subsection{Weights and lattices}

Now let $W$ be the associated Weyl group and $(-,-)$ the Killing form for
$\lie{g}$. Then $(-,-)$ induces an isomorphism $: \lie{h} \to \lie{h}^*$.
Fix a positive system $R^+ \subset R$ of roots - or equivalently, the
subset $\Pi = \{ \alpha_i : i \in I \}$ of simple roots indexed by a set
$I$. Then $\Pi$ is a basis of $\lie{h}^*$, and $R = R^+ \coprod R^-$,
where $R^+ = -R^- = R \cap \Z_{\geq 0} \Pi \subset \lie{h}^*$. For each
$i \in I$, suppose $h'_i \longleftrightarrow \alpha_i$ via the Killing
form; now define the co-roots to be $h_i := (2 / \alpha_i(h'_i)) \cdot
h'_i \in \lie{h}$.

Next, choose Chevalley generators $e_i \in \lie{g}_{\alpha_i}$ and $f_i
\in \lie{g}_{-\alpha_i}$ that generate (as above) a copy of $\lie{sl}_2$
together with $h_i$. Then the $e_i$ and $f_i$ generate nilpotent
subalgebras $\lie{n}^\pm$ of $\lie{g}$, and the corresponding Borel
(maximal solvable) subalgebras are: $\lie{b}^\pm := \lie{h} \oplus
\lie{n}^\pm$.

We now come to distinguished lattices inside the set of weights. A weight
$\lambda \in \lie{h}^*$ is said to be dominant if $\lambda(h_i) \geq 0$
for all $i \in I$, and integral if $\lambda(h_i) \in \Z$.
The set of integral weights is a lattice $\Lambda \subset \lie{h}^*$,
whose $\Z$-basis is the set of fundamental weights $\{ \varpi_i : i \in I
\}$. They are defined by: $\varpi_i(h_j) := \delta_{ij}$. Let $\Lambda^+$
denote the set of dominant integral weights - which are simply $\Z_{\geq
0}$-linear combinations of the $\varpi_i$. Then $\Lambda^+$ is also
(in bijection with) the set of dominant characters of a maximal torus $T$
of $G$ (where $G$ is a complex connected Lie group such that $\lie{g} =
\Lie(G)$).

The weight lattice $\Lambda$ also contains the root lattice $\Z \Pi$ with
$\Z$-basis $\Pi$. The group $W$ acts on $\lie{h}^*$ and preserves either
lattice. It is generated by the simple reflections $\{ s_i : i \in I \}$
which act via: $s_i(\lambda) := \lambda - 2 \lambda(h_i) \alpha_i$. The
reflections $s_i$ satisfy the Coxeter relations according to the Dynkin
diagram of $\lie{g}$, and $W$ is a finite group with a well-defined
length-function $\ell : W \to \Z_{\geq 0}$, the associated Bruhat order,
and a unique longest element $w_\circ = w_\circ^{-1}$.

For example, for $\lie{g} = \lie{sl}_2(\C)$, $\Pi = R^+ = \{ \alpha \}$,
where $\alpha(h) = 1$. The associated fundamental weight is $\varpi =
\frac{1}{2} \alpha$, so that $\Z \Pi = 2 \Lambda$ and $\Lambda^+ =
\Z_{\geq 0} \varpi$. Moreover, $W = S_2 = \{ 1, s = w_\circ \}$, where
$w_\circ \lambda = - \lambda$ for all weights $\lambda \in \lie{h}^* = \C
\varpi = \C \alpha$.

\subsection{Finite-dimensional representations}

A representation or module of a group $G$ is simply a group homomorphism
$\varpi : G \to GL(V)$ for some (real or complex) vector space $V$.
Similarly, a $\lie{g}$-module $V$ is a Lie algebra homomorphism $\varpi :
\lie{g} \to \End(V) = \lie{gl}(V)$. We say that $\varpi$ is irreducible
if $V$ has no nonzero proper submodule; completely reducible or
semisimple if $V$ is a direct sum of irreducible submodules; and
finite-dimensional if $\dim V < \infty$.

When $\lie{g}$ is semisimple, the irreducible finite-dimensional
representations are all weight modules for $\lie{h}$, and parametrized by
$\Lambda^+$. Here is a quick construction: suppose $\lie{Ug}$ is the
universal enveloping algebra of $\lie{g}$, and for any $\lambda \in
\lie{h}^*$, let $I_\lambda$ be the left $\lie{Ug}$-ideal generated by
$\ker \lambda \subset \lie{h}$ and $\{ e_i : i \in I \}$.
The Verma module $M(\lambda)$ is defined to be the quotient $\lie{Ug} /
I_\lambda$. $M(\lambda)$ is a weight module and $\wt(M) = \lambda -
\Z_{\geq 0} \Pi$. Moreover, a cyclic generator of $M(\lambda)$ lies in
$M(\lambda)_\lambda = \C \cdot \overline{1_{\lie{Ug}}}$, which is called
the ``highest weight space".

The modules $M(\lambda)$ were studied by Verma in his thesis and in
\cite{Ver}; they are of tremendous importance in representation theory -
not only for semisimple Lie algebras, but also Kac-Moody and Virasoro
algebras, quantum groups, and other algebras with triangular
decomposition. Every Verma module $M(\lambda)$ has a largest maximal
submodule and hence a unique simple quotient; denote this by
$V(\lambda)$. Then $V(\lambda)$ also has the same properties as
$M(\lambda)$ (mentioned in the previous paragraph); moreover, the modules
$V(\lambda)$ are pairwise non-isomorphic for $\lambda \in \lie{h}^*$.

Note that $V(\lambda)$ is finite-dimensional if and only if $\lambda \in
\Lambda^+$, and these exhaust all finite-dimensional simple
$\lie{g}$-modules up to isomorphism. The dual space to a $\lie{g}$-module
is also a $\lie{g}$-module; for instance, $V(\lambda)^* \cong V(-w_\circ
\lambda)$ if $\lambda \in \Lambda^+$. Moreover, every finite-dimensional
$\lie{g}$-module is semisimple; in other words, every indecomposable
finite-dimensional $\lie{g}$-module is irreducible.

For example, if $\lie{g} = \lie{sl}_2$, then for every $0 \leq n \in \Z$,
there exists a unique irreducible $\lie{sl}_2$-module $V(n) \cong V(n)^*$
of dimension $n+1$. (Note that we are abusing notation by using $V(n)$ to
refer to $V(n \varpi)$.) $V(n)$ contains a vector $v_n$ of weight $n$ (a
``highest weight vector"), and a basis $v_{n-2i} := (f^i / i!) v_n$ of
weight vectors, for $0 \leq i < \dim V(n)$. One checks that for all $i$,
\begin{equation}\label{Esl2}
h \cdot v_{n-2i} := (n-2i) v_{n-2i}, \quad e \cdot v_{n-2i} = (n-i+1)
v_{n-2i+2}, \quad f \cdot v_{n-2i} = (i+1) v_{n-2i-2},
\end{equation}

\noindent where we set $v_{n+2} = v_{-n-2} = 0$. A concrete example of
$V(n)$ is provided by the space of homogeneous polynomials in $X,Y$ of
total degree $n$. Define
\[ P_n := \ker \left(-n + X \frac{\partial}{\partial X} + Y
\frac{\partial}{\partial Y} \right) \subset \C[X,Y]. \]

\noindent Now define $\rho_n : \lie{sl}_2 \to \End_\C(P_n)$ via:
\[ \rho_n(e) := X \frac{\partial}{\partial Y}, \qquad \rho_n(f) := Y
\frac{\partial}{\partial X}, \qquad \rho_n(h) := X
\frac{\partial}{\partial X} - Y \frac{\partial}{\partial Y}. \]

\noindent Then $P_n \cong V(n)$ as $\lie{sl}_2$-modules.

\subsection{Central characters}

Given an associative algebra $A$, denote its center by $Z(A)$.
An important tool in studying Verma modules and finite-dimensional
modules over a (complex) semisimple Lie algebra $\lie{g}$ is the center
$Z(\lie{Ug})$. Classical results of Chevalley and Harish-Chandra imply
that this is a polynomial algebra in $|I|$ (algebraically independent)
generators. Moreover, for all $\lambda \in \lie{h}^*$, there exists a
central character (i.e., an algebra homomorphism) $\chi(\lambda) :
Z(\lie{Ug}) \to \C$, such that $\ker \chi(\lambda)$ kills $M(\lambda)$
and hence $V(\lambda)$ for all $\lambda \in \lie{h}^*$. In particular,
every $z \in Z(\lie{Ug})$ acts on $V(\lambda)$ by a scalar (for each
$\lambda$).

The following important results due to Harish-Chandra completely classify
and explain better, the set of central characters. (These are also known
as {\it infinitesimal characters} in the literature.) To state the
results, we need some notation: define $\rho := \frac{1}{2} \sum_{\alpha
\in R^+} \alpha \in \lie{h}^*$; then $\rho = \sum_{i \in I} \varpi_i \in
\Lambda^+$ and $w_\circ \rho = - \rho$. Now define the twisted action of
$W$ on $\lie{h}^*$ via:
\[ w * \lambda := w(\lambda + \rho) - \rho, \qquad \forall w \in W,
\lambda \in \lie{h}^*. \]

\noindent Then $w * -$ induces an algebra automorphism of $\Sym \lie{h} =
P(\lie{h}^*)$ (the space of complex polynomials on $\lie{h}^*$) for all
$w \in W$. Moreover, given any $w \in W$, define $\lie{n}^\pm_w :=
\bigoplus_{\alpha \in R \cap \Z_{\geq 0} (w \Pi)} \lie{g}_{\pm \alpha}$.
Then $\lie{g} = \lie{n}^-_w \oplus \lie{h} \oplus \lie{n}^+_w$ for all $w
\in W$; for example, when $w=1$, this decomposition is precisely $\lie{g}
= \lie{n}^- \oplus \lie{h} \oplus \lie{n}^+$. Hence $(\lie{Ug})_0 \subset
\lie{Uh} \oplus \lie{n}^-_w (\lie{Ug}) \lie{n}^+_w$ by the
Poincar\'e-Birkhoff-Witt theorem. Define the Harish-Chandra map $\beta^{w
\Pi}$ to be the projection $: (\lie{Ug})_0 \twoheadrightarrow \lie{Uh} =
\Sym \lie{h}$.

\begin{theorem}[\cite{Har1}]\label{Thc}
For all $w \in W$, $\beta^{w\Pi}$ is a ring homomorphism $: (\lie{Ug})_0
\twoheadrightarrow \Sym \lie{h}$, which restricts to a ring isomorphism
$: Z(\lie{Ug}) \to (\Sym \lie{h})^{(W,*)}$. Moreover, for all $\lambda
\in \lie{h}^*$, $\chi(\lambda) = \lambda \circ \beta^\Pi$. (Here,
$\lambda$ extends to an algebra map on $\Sym \lie{h}$.)
Every character of $Z(\lie{Ug})$ equals $\chi(\lambda)$ for some $\lambda
\in \lie{h}^*$. Moreover, $\chi(\lambda) = \chi(\mu) \Leftrightarrow
\lambda = w * \mu$ for some $w \in W$.
\end{theorem}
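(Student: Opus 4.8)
I would prove the statement in four moves, the crux being the Harish-Chandra isomorphism on $Z(\lie{Ug})$. First, each $\beta^{w\Pi}$ is a ring homomorphism on all of $(\lie{Ug})_0$ (note any $z \in Z(\lie{Ug})$ commutes with $\lie{h}$, hence lies in $(\lie{Ug})_0$). Indeed, for the triangular decomposition $\lie{g} = \lie{n}^-_w \oplus \lie{h} \oplus \lie{n}^+_w$ a weight-$0$ PBW monomial lies in $\lie{Uh}$ unless it has both a nontrivial $\lie{n}^-_w$- and a nontrivial $\lie{n}^+_w$-factor (as $w\Pi$ is a basis of $\lie{h}^*$); so $(\lie{Ug})_0 = \lie{Uh} \oplus K_w$ with $K_w := \lie{n}^-_w(\lie{Ug})\lie{n}^+_w \cap (\lie{Ug})_0$ and $\beta^{w\Pi}$ the projection with kernel $K_w$. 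Since $\lie{Ug}\lie{n}^+_w$ is a left ideal and $\lie{n}^-_w\lie{Ug}$ a right ideal of $\lie{Ug}$, and since $\lie{Ug}\lie{n}^+_w \cap (\lie{Ug})_0 = \lie{n}^-_w\lie{Ug} \cap (\lie{Ug})_0 = K_w$ (same weight argument), $K_w$ is a two-sided ideal of $(\lie{Ug})_0$; hence $\beta^{w\Pi}$ is a ring homomorphism onto $(\lie{Ug})_0 / K_w \cong \lie{Uh} = \Sym\lie{h}$.

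Second, I would identify $\chi(\lambda) = \lambda \circ \beta^\Pi$ and prove $(W,*)$-invariance of $\beta^\Pi(Z(\lie{Ug}))$. If $z \in Z(\lie{Ug})$ and $v_\lambda$ generates $M(\lambda)_\lambda$, write $z = \beta^\Pi(z) + u$ with $u \in K_1$; each term of $u$ has a rightmost factor in $\lie{n}^+$, which kills $v_\lambda$, so $z v_\lambda = \lambda(\beta^\Pi(z))\, v_\lambda$, and since $M(\lambda) = \lie{Ug}\, v_\lambda$ with $z$ central, $z$ acts on $M(\lambda)$ by the scalar $\lambda(\beta^\Pi(z))$ --- i.e.\ $\chi(\lambda) = \lambda \circ \beta^\Pi$ (the same computation gives the scalar $\mu(\beta^\Pi(z))$ on \emph{any} highest weight vector of weight $\mu$ in \emph{any} $\lie{g}$-module). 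Now fix a simple root $\alpha_i$ and $\lambda$ with $m := \lambda(h_i) + 1 \in \Z_{>0}$: a direct $\lie{sl}_2$-computation (cf.\ \eqref{Esl2}) together with $[e_j, f_i] = 0$ for $j \neq i$ shows $f_i^{\,m} v_\lambda$ is a nonzero highest weight vector of $M(\lambda)$ of weight $\lambda - m\alpha_i = s_i * \lambda$, whence $\chi(\lambda)(z) = \chi(s_i * \lambda)(z)$, i.e.\ $p(\lambda) = p(s_i * \lambda)$ for $p := \beta^\Pi(z) \in \Sym\lie{h} = P(\lie{h}^*)$. This polynomial identity holds on the hyperplanes $\{\lambda : \lambda(h_i) = n\}$ for all $n \in \Z_{\geq 0}$, and a polynomial vanishing on infinitely many parallel hyperplanes is zero, so $p = p \circ (s_i * -)$; since $W = \langle s_i : i \in I\rangle$, $p \in (\Sym\lie{h})^{(W,*)}$.

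Third --- the crux --- to upgrade this to an \emph{iso} $\beta^\Pi \colon Z(\lie{Ug}) \xrightarrow{\,\sim\,} (\Sym\lie{h})^{(W,*)}$, I would pass to associated graded algebras for the standard filtration of $\lie{Ug}$, where $\operatorname{gr}\lie{Ug} = \Sym\lie{g}$. Since $\lie{g}$ is semisimple, $\operatorname{gr} Z(\lie{Ug}) = (\Sym\lie{g})^{\lie{g}}$ (the filtration splits $\ad\lie{g}$-equivariantly, e.g.\ via symmetrization); the associated graded of $\beta^\Pi|_{Z(\lie{Ug})}$ is Chevalley's restriction map $(\Sym\lie{g})^{\lie{g}} \to (\Sym\lie{h})^W$ (the $\rho$-shift distinguishing $*$ from the linear $W$-action being of lower order), which is an isomorphism by Chevalley's restriction theorem; and $\operatorname{gr} (\Sym\lie{h})^{(W,*)} = (\Sym\lie{h})^W$. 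A filtered map whose associated graded is an isomorphism is an isomorphism, so $\beta^\Pi|_{Z(\lie{Ug})}$ is injective, surjective onto $(\Sym\lie{h})^{(W,*)}$, and a ring isomorphism. This step is where the real work lies: surjectivity onto the \emph{full} invariant ring is what genuinely needs Chevalley's restriction theorem and the size of $\operatorname{gr} Z(\lie{Ug})$, whereas everything else is bookkeeping with PBW, Verma modules, and elementary commutative algebra.

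Finally, via $\beta^\Pi$ the characters of $Z(\lie{Ug})$ are exactly those of $(\Sym\lie{h})^{(W,*)}$. Since $\Sym\lie{h}$ is module-finite (so integral) over $(\Sym\lie{h})^{(W,*)}$ --- each $x \in \Sym\lie{h}$ is a root of $\prod_{w \in W}(T - w*x)$, whose coefficients are $(W,*)$-invariant --- every character of $(\Sym\lie{h})^{(W,*)}$ is the restriction of evaluation at some $\lambda \in \lie{h}^*$ (lying-over for integral extensions), i.e.\ equals $\chi(\lambda) = \lambda \circ \beta^\Pi$. And $\chi(\lambda) = \chi(\mu)$ iff $p(\lambda) = p(\mu)$ for all $p \in (\Sym\lie{h})^{(W,*)}$; if $\mu = w*\lambda$ this is clear, while if $\mu \notin W*\lambda$ one picks $q \in \Sym\lie{h}$ vanishing on the finite set $W*\lambda$ but at no point of the disjoint finite set $W*\mu$ and forms $\prod_{w \in W} q \circ (w*-) \in (\Sym\lie{h})^{(W,*)}$, which separates $\lambda$ from $\mu$. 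This yields the remaining assertions.
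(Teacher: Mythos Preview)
The paper does not give its own proof of this theorem: it is stated with a citation to Harish-Chandra's original paper \cite{Har1} and immediately illustrated in the $\lie{sl}_2(\C)$ example, with no argument supplied. So there is nothing in the paper to compare your proof against.

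That said, your proof is correct and follows the standard textbook route (essentially the argument in Humphreys \cite{H}): PBW to see $\beta^{w\Pi}$ is a ring map with kernel a two-sided ideal of $(\lie{Ug})_0$; the Verma-module computation to identify $\chi(\lambda)$ and, via the singular vectors $f_i^{\lambda(h_i)+1} v_\lambda$, to obtain $(W,*)$-invariance on a Zariski-dense set; passage to the associated graded and Chevalley's restriction theorem for the isomorphism; and finally integrality of $\Sym\lie{h}$ over its $(W,*)$-invariants together with lying-over to classify characters. Each step is sound. One small remark on the third move: your claim $\operatorname{gr}\,(\Sym\lie{h})^{(W,*)} = (\Sym\lie{h})^W$ is most cleanly justified by observing that the $\rho$-shift automorphism $p \mapsto p(\,\cdot + \rho)$ of $\Sym\lie{h}$ is filtration-preserving with associated graded the identity and intertwines the twisted and untwisted $W$-actions, so the two invariant subalgebras have the same associated graded; you implicitly use this, and it is worth making explicit.
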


\noindent For instance, when $\lie{g} = \lie{sl}_2(\C)$, $|I| = 1$ and
\[ Z(\lie{U}(\lie{sl}_2(\C))) = \C[\Delta], \qquad \Delta = 4 fe + h^2 +
2h, \qquad \beta^\Pi(\Delta) = h^2 + 2h. \]

\noindent Then for all $z \in \C$, the Casimir element $\Delta$ acts on
$V(z \varpi)$ as the scalar $\chi(z \varpi)(\Delta) = z^2 + 2z$. Note
that $\rho = \varpi$ and $\chi(z \varpi) \equiv \chi(z' \varpi)$ on
$Z(\lie{Ug}) = \C[\Delta]$, if and only if $z+z'=-2$ - i.e., $z' \varpi =
s * (z \varpi)$. Similarly, $s * h = -h-2$, so:
\[ \beta^\Pi(\Delta) = (h+1)^2 - 1 = ((s*h)+1)^2 - 1 = s *
\beta^\Pi(\Delta). \]

\section{Harish-Chandra modules}

We start our discussion of \cite{PRV2} with the main motivation: the
works of Harish-Chandra. The representations studied by Parthasarathy,
Ranga Rao, and Varadarajan are known today as (irreducible) {\it
admissible Harish-Chandra modules}. They were first studied in the
setting of real semisimple Lie groups by Harish-Chandra in
\cite{Har2,Har3}.

\subsection{}

For the better part of a century, and since the advent of quantum
mechanics, mathematicians have been interested in unitary representations
and harmonic analysis of locally compact (abelian) topological groups.
One of the basic results in this direction is the Peter-Weyl Theorem,
which says that every unitary (Hilbert space) representation of a compact
group decomposes as a direct sum of finite-dimensional irreducible
submodules. Given the correspondence between complex semisimple groups
and compact groups (discovered by Weyl), the class of unitary
representations of complex semisimple Lie groups $G$ and their maximal
compact subgroups $K$ has been a subject of wide interest and research in
the literature.

To explain the motivation for \cite{PRV2}, some notation is now needed.
Given $G \supset K$ as above, $\lie{k} = \Lie(K)$ is the compact form of
$\lie{g} = \Lie(G)$. Let $\lie{g}^\C := \Lie(G) \otimes_\R \C$ be the
complexification of its Lie algebra; this is a complex semisimple Lie
algebra that contains the reductive subalgebra $\lie{k}^\C := \Lie(K)
\otimes_\R \C$.
In his works cited above, Harish-Chandra initiated the study of a class
of irreducible infinite-dimensional $G$-modules that was larger than the
class of unitary $G$-modules (yet these modules were direct sums of
finite-dimensional $K$-modules). Harish-Chandra constructed and studied
these modules algebraically, via their correspondence to
$\lie{g}^\C$-modules (when $G$ has finite center). This correspondence
was known for finite-dimensional modules, but he showed how to extend it
to a deep and powerful theory of Banach-space representations of $G$.

More precisely, given a continuous Banach space $G$-representation $\pi$,
whose restriction to $K$ contains every irreducible $K$-module with at
most finite multiplicity (this is called ``admissibility"),
Harish-Chandra considered its subspace of {\it K-finite vectors} (i.e.,
the vectors that lie in finite-dimensional $K$-stable subspaces) - or
more precisely, the $\lie{k}^\C$-finite vectors. This subspace is called
the {\it infinitesimal representation associated to $\pi$}. Two such
Banach space representations are said to be {\it infinitesimally
equivalent} if their infinitesimal representations are equivalent. (For
instance, Harish-Chandra showed in \cite{Har2} that two irreducible
unitary $G$-modules are equivalent if and only if they are
infinitesimally equivalent.) One of the crown jewels of his work is the
{\it subquotient theorem} \cite{Har3}, which says that every such
admissible $V$ is infinitesimally equivalent to a subquotient of a
Hilbert space representation of $G$ (the ``principal series
representations").

\subsection{}

We now return to \cite{PRV2}, where the authors are interested in using
Harish-Chandra's work to gain a deeper understanding of a special case of
this situation: namely, when $G$ is already a complex group. (This
setting was also studied earlier - by Weyl in relating complex and
compact groups, but also by Gelfand and Naimark \cite{GN}, and
Harish-Chandra as well.)
By \cite{Har3}, it turns out that every $\lie{k}^\C$-finite irreducible
$G$-representation $V$ (with at most finite multiplicities) has an
infinitesimal character. In other words, the center
$Z(\lie{U}(\lie{g}^\C))$ acts by scalars on it.
As noted in \cite{PRV2,Var}, the problem of describing the infinitesimal
equivalence classes of irreducible Banach space $G$-representations
(which are ``admissible", hence equipped with an infinitesimal character)
can now be reduced by Harish-Chandra's work, to describing the
irreducible $\lie{k}^\C$-finite representations - i.e., the so-called
simple ``$(\lie{g}^\C, \lie{k}^\C)$-modules" (or ``$(\lie{g}^\C,
K)$-modules"). Later in this section, we will mention certain prominent
features from Harish-Chandra's approach in the real semisimple case, as
we specialize them to the complex case in \cite{PRV2}.

Thus, the primary motivation in \cite{PRV2} was to study the irreducible
$G$-representations when $G$ is a complex semisimple group, by applying
the methods and deep results from \cite{Har2,Har3}.
For instance, the authors are able to simplify Harish-Chandra's
description of the closed subspaces of the principal series
representations, which yield Banach (actually, Hilbert) space
$G$-representations. Furthermore, the theory of minimal types developed
in \cite{PRV2} helps obtain a deeper understanding of these simple
$(\lie{g}^\C, \lie{k}^\C)$-modules.

We now introduce the setting of \cite{PRV2}. If $G$ is a complex Lie
group, then $\lie{g} := \Lie(G)$ is a complex Lie algebra, and $\Lie(K)$
is its compact (real) form. Thus as real Lie algebras, $\lie{g} = \Lie(K)
\oplus \sqrt{-1} \cdot \Lie(K)$ in the complex structure of $\lie{g}$.
The complexified pair $(\lie{g}^\C, \lie{k}^\C)$ is isomorphic to
$(\lie{g} \times \lie{g}, \overline{\lie{g}})$, where
$\overline{\lie{g}}$ is the diagonal copy of $\lie{g}$ embedded in
$\lie{g} \times \lie{g}$.\footnote{This is achieved using a conjugation
$X \mapsto X^c$ of $\lie{g}$ that fixes $\lie{k}$. Thus, $\lie{g}$ embeds
inside $\lie{g}^\C$ via: $X \mapsto (X^c, X)$, and when restricted to
$\lie{k}$, one obtains: $X \mapsto (X,X)$ - whence we get that
$\lie{k}^\C = \overline{\lie{g}}$.}
Now the $\lie{g} \times \lie{g}$-modules of interest (studied by
Harish-Chandra in general) are the ones that decompose into direct sums
of finite-dimensional $\overline{\lie{g}}$-modules with at most finite
multiplicities.

Here is the precise framework studied in \cite{PRV2} (and henceforth in
this article), stated here in a slightly more general setting.

\begin{defn}
Suppose $\lie{g}$ is a complex reductive finite-dimensional Lie algebra
contained in a complex Lie algebra $\ghat$. Define the category
$\scrc(\ghat, \lie{g})$ to be the full subcategory of $\ghat$-modules,
such that every object is isomorphic to a direct sum of
finite-dimensional irreducible $\lie{g}$-modules $\cald$, each of which
occurs with finite multiplicity. (This last condition is termed {\it
$\lie{g}$-admissibility}.) Define $[V : \cald]$ to be this multiplicity
(which may be zero if no summand is isomorphic to $\cald$); this integer
does not depend on the direct sum decomposition of $V$. (Note that we
assume $\cald \neq 0$.)

If $V$ is in $\scrc(\ghat, \lie{g})$ and $\cald$ is a (nonzero) simple
$\lie{g}$-module, the {\it isotypical subspace} $V_\cald$ of $V$ is
defined as the (finite-dimensional) span of all the $\lie{g}$-submodules
of $V$ that are isomorphic to $\cald$. Clearly, the center $Z(\lie{U}
\ghat)$ preserves $V_\cald$ for each $\cald$, and hence acts locally
finitely on $V$. Moreover, $[V : \cald]$ then equals $[V_\cald : \cald] =
\dim V_\cald / \dim \cald$.
\end{defn}

\subsection{Examples of Harish-Chandra modules in the literature}

The goal of \cite{PRV2} was to study the simple objects in the category
$\scrc(\lie{g} \times \lie{g}, \overline{\lie{g}})$. Before elaborating
on their results, we remark that various families of Harish-Chandra
modules have been widely studied in the literature.
For example, Harish-Chandra modules are examples of integrable
$\overline{\lie{g}}$-modules - i.e., $\overline{\lie{g}}$-modules where
every vector is contained in a finite-dimensional
$\overline{\lie{g}}$-module.
Here are some other examples; in them, we always assume that $\lie{g}$ is
semisimple (and complex).

\begin{ex}
Suppose $\lie{g}$ is semisimple and $\lie{g}_0$ is its compact real form.
Let $G_0$ be a compact Lie group with Haar measure $\mu$, such that
$\lie{g}_0 = \Lie(G_0)$. Then by the Peter-Weyl Theorem, a dense subspace
$V$ of $L^2(G_0,\C,\mu)$ is an object of $\scrc(\lie{g}, \lie{g})$.
Moreover, $[V : V(\lambda)] = \dim V(\lambda)$ for all $\lambda \in
\Lambda^+$.
\end{ex}

\begin{ex}
Recall that $\lie{Ug}$ is a direct sum of finite-dimensional
$\lie{g}$-modules, since every term in its standard filtration is. Is it
also an object of $\scrc(\lie{g}, \lie{g})$? The answer is no - in fact,
{\it no} finite-dimensional module occurs with finite nonzero
multiplicity. To see this, note by Kostant's ``separation of variables
theorem" \cite{Kos2} that $\lie{Ug}$ is free as a module (under
multiplication) over its center:
\begin{equation}\label{Ekostant}
\lie{Ug} \cong \bbh(\lie{g}) \otimes Z(\lie{Ug}),
\end{equation}

\noindent where $\bbh(\lie{g})$ is (isomorphic as a $\lie{g}$-module to)
the space of ``harmonic polynomials on $\lie{g}$", and is stable under
the adjoint action of $\lie{g}$ on $\lie{Ug}$. Thus, the multiplicity in
$\lie{Ug}$ of every finite-dimensional module is either $0$ or $\dim
Z(\lie{Ug})$, which is infinite. In particular, $\lie{Ug}$ is not
admissible.

However, $\bbh(\lie{g})$ is indeed an object in $\scrc(\lie{g},
\lie{g})$; in fact, Kostant proved in \cite{Kos2} that $[\bbh(\lie{g}) :
V(\lambda)] = \dim V(\lambda)_0$ for all $\lambda \in \Lambda^+$. This is
the starting point for another important contribution of \cite{PRV2} to
the theory of semisimple and affine (quantized) Lie algebras - the
so-called ``PRV determinants". We will discuss these in a later section.
\end{ex}

\begin{ex}
Simple finite-dimensional $\lie{g} \times \lie{g}$-modules are clearly in
$\scrc(\lie{g} \times \lie{g}, \overline{\lie{g}})$, by Weyl's Theorem of
complete reducibility. This example is also the starting point for a
result and a conjecture from \cite{PRV2} (the ``PRV Theorem" and ``PRV
conjecture"), that have since been extensively used and generalized in
the literature. We address these in detail in the next section.
\end{ex}

\begin{ex}
The above example of $\rho_n$ for $\lie{g} = \lie{sl}_2(\C)$ can be used
to produce an object in $\scrc(\lie{sl}_2(\C), \lie{sl}_2(\C))$ as
follows: the $\lie{sl}_2(\C)$-module 
\[ \C[X,Y] = \bigoplus_{n \geq 0} P_n = \bigoplus_{n \geq 0} V(n) \]

\noindent is clearly such an object.

Note that $P_n = V(n) = \Sym^{n-1} (V(1))$ for all $n \in \N$. Thus,
$\C[X,Y] = \Sym (V(1))$. With this in mind, we can generalize the above
example to $\lie{g} = \lie{sl}_n(\C)$, as it acts on its simple module
$\C^n$ (for $n>1$). Consider the modules $\Sym^k(\C^n) \subset
(\C^n)^{\otimes k}$ for $k \geq 0$. Identifying a basis of $\C^n$ with
commuting variables $X_1, \dots, X_n$, it is not hard to show that as
$\lie{g}$-modules, $\Sym^k(\C^n)$ is precisely the space $P_{n,k}$ of
homogeneous polynomials in $X_1, \dots, X_n$ of total degree $k$, where
$e_{ij}$ acts on $P_{n,k}$ as $X_i \partial_j$ for all $1 \leq i, j \leq
n$ and all $k$.

One can now check that $P_{n,k}$ is a simple module over
$\lie{sl}_n(\C)$\footnote{See {\tt
http://math.stackexchange.com/questions/120338} for the sketch of a
proof.}. Since $n>1$, hence
$\displaystyle \dim P_{n,k} = \binom{k+n-1}{n-1}$ is increasing in $k$.
Thus the $P_{n,k}$ are non-isomorphic for fixed $n$, and so
\[ \C[X_1, \dots, X_n] = \bigoplus_{k \geq 0} P_{n,k} = \Sym(\C^n) \]

\noindent is indeed an object in $\scrc(\lie{sl}_n(\C), \lie{sl}_n(\C))$.
\end{ex}

\begin{ex}
If $\lie{g}$ is semisimple and $\lie{h}$ is the Cartan subalgebra of
$\lie{g}$, then $\scrc(\lie{g}, \lie{h})$ is the category of (admissible)
weight modules. There has been extensive research on the study and
classification of irreducible (admissible) weight modules; see
\cite{Mat3} for more on this. We remark that Mathieu also studied
Harish-Chandra modules in other settings in \cite{Mat2}: the Virasoro
algebra, the Cartan algebra, and the affine Kac-Moody algebras (as
mentioned in the conclusion to {\em loc.~cit.}).

Moreover, a very special family of admissible weight modules constitutes
the objects of the Bernstein-Gelfand-Gelfand Category $\calo$, which was
introduced in \cite{BGG1}.
A lot of research has been undertaken on the Category $\calo$ in various
settings in modern representation theory - including semisimple and
Kac-Moody Lie algebras, the quantum groups associated with them, the
Virasoro algebra, and more modern constructions such as rational
Cherednik algebras, infinitesimal Hecke algebras, and $W$-algebras. In
particular, for semisimple $\lie{g}$, the classification of irreducible
admissible weight modules (by work of Mathieu \cite{Mat3} and others) as
well as of primitive ideals (by work of Duflo \cite{Du3} and others)
reduces to the study of simple objects in $\calo$. See
\cite{H2,Jo2,Kh3,MP} for additional references and results.
\end{ex}

\subsection{A key class of homomorphisms}

We now outline Harish-Chandra's strategy for studying admissible
irreducible $G$-representations, as it is used by Parthasarathy et al in
the complex setting.
Given $\lie{g} \subset \ghat$ as above, let $\Omega$ denote the
centralizer of $\lie{g}$ in $\lie{U} \ghat$. (This is denoted by
$\lie{O}$ in \cite{Var}.) Then $Z(\lie{Ug}) + Z(\lie{U} \ghat) \subset
\Omega \subset \lie{U} \ghat$ is a chain of algebras\footnote{In
Varadarajan's reminiscences \cite{Var}, he points out on Page (xii) that
$\Omega$ is highly nonabelian, so that the first inclusion is not an
equality in general.}.
Now suppose $V$ is an object of $\scrc(\ghat, \lie{g})$, and $\cald$ is a
simple finite-dimensional $\lie{g}$-module such that $[V : \cald] = r
>0$. Then $\cald \cong V(\lambda)$ as finite-dimensional (and hence,
highest-weight) $\lie{g}$-modules, and $V_\cald \cong V(\lambda) \otimes
\C^r$ as a $\lie{g}$-module (i.e., $\C^r$ is the multiplicity space).
Multiplication by every $z \in \Omega$ preserves the highest weight space
$(V_\cald)_\lambda = V(\lambda)_\lambda \otimes \C^r$; this yields a
representation $\eta_{V,\cald}$ of $\Omega$ into $\C^r$. (This is called
$\eta((\nu^0), \pi)$ in \cite{PRV2}, where $\pi = V$ and $(\nu^0) =
\cald$.) Moreover $V_\cald$ now decomposes as $\cald \otimes \C^r$, under
the joint action of $\lie{g}$ and $\Omega$.

As a special case, suppose $r=1$. Then $\eta_{V,\cald}$ is a homomorphism
$: \Omega \to \C$. These homomorphisms are the key tools used in
\cite{PRV2} to study simple admissible Harish-Chandra modules, as we now
explain.\medskip

Suppose $\ghat = \lie{g} \times \lie{g} \supset \overline{\lie{g}}$. In
order to study simple objects in $\scrc(\lie{g} \times \lie{g},
\overline{\lie{g}})$, the authors of \cite{PRV2} follow the approach
suggested by Harish-Chandra in \cite{Har3}: if $r = [V : \cald] > 0$,
then as above, $V_\cald \cong \cald \otimes \C^r$ under the joint action
of $\lie{U}\overline{\lie{g}}$ and $\Omega$ - and moreover, the
$\Omega$-representation $\eta_{V, \cald}$ is simple.
Now the following remarkable fact holds: {\it the equivalence class of
the representation $\eta_{V,\cald}$ of $\Omega$ determines that of $V$,
for every component $\cald$ with $r>0$}.
More precisely, if $V, V'$ are simple objects of $\scrc(\lie{g} \times
\lie{g}, \overline{\lie{g}})$ and $\cald$ is a simple finite-dimensional
$\overline{\lie{g}}$-module such that $[V : \cald] + [V' : \cald] > 0$,
then
\begin{equation}\label{Ehar}
[V : \cald] = [V', \cald] > 0, \ \eta_{V, \cald} \cong_\Omega \eta_{V',
\cald} \Longleftrightarrow V \cong V'.
\end{equation}

\noindent (See \cite{LMC} for a generalization of this fact.)
Thus, a ``first approach" would be to fix various $\cald$ and study the
$\Omega$-modules $\eta_{V,\cald}$ for all $V$ with $[V : \cald] > 0$. The
authors remark in \cite{PRV2} that such an approach was not very fruitful
and so a different method had to be adopted. Their contribution was to
introduce and study the following notion.

\begin{defn}
Suppose $V$ is a simple object in $\scrc(\lie{g} \times \lie{g},
\overline{\lie{g}})$ and $\lambda \in \Lambda^+$. We say that $\lambda$
(or $\vg(\lambda)$) is a {\it minimal type} of $V$ if $[V : \vg(\lambda)]
> 0$, and
\[ [V : \vg(\mu)] > 0 \implies \lambda \in \wt \vg(\mu). \]
\end{defn}

\noindent It is clear that there is at most one minimal type for each
$V$.

Now the strategy is as follows: first study a class of modules $V$ for
which the minimal type $\cald$ can be shown to exist. These are the
irreducible finite-dimensional representations of $\lie{g} \times
\lie{g}$, and it turns out that there is an explicit recipe to compute
the homomorphism $\eta_{V,\cald}$ in this case. This recipe involves
$\eta_{V,\cald}$ equalling a polynomial-valued homomorphism $\h^{\Pi'} :
\Omega \to P(\lie{h}^* \times \lie{h}^*)$, evaluated at some $\lambda \in
\Lambda^+, \nu \in \Lambda$ - in other words, $\h^{\Pi'}(-;\lambda,\nu) :
\Omega \to \C$. (This is explained in a later section.)

The authors then go on to explicitly construct a family $\displaystyle \{
\pihat_{\lambda, \nu} : \nu \in \Lambda, \lambda \in \lie{h}^* \}$ of
simple objects in $\scrc(\lie{g} \times \lie{g}, \overline{\lie{g}})$,
each of which has a minimal type $\overline{\nu}$ occurring with
multiplicity $r=1$. (This family necessarily includes the
finite-dimensional simple $\lie{g} \times \lie{g}$-modules, as we will
see below.) The authors show that for each such $\lambda$ and $\nu$, the
related ``key homomorphism" $\displaystyle \eta_{\pihat_{\lambda,\nu},
\overline{\nu}} : \Omega \to \C$ turns out to be the same recipe
$\h^{\Pi'}$ as above, now evaluated at (more general points) $\lambda,
\nu$. Thus, Equation \eqref{Ehar} can be applied to discuss the
classification of these modules $\pihat_{\lambda,\nu}$. (Here and
henceforth, we abuse notation and use $\eta_{\pihat_{\lambda,\nu},
\overline{\nu}}$ to refer to $\eta_{\pihat_{\lambda,\nu},
\vg(\overline{\nu})}$.)\medskip

Thus, the starting point for \cite{PRV2} - and even earlier, for
Varadarajan and Varadhan in 1963 for the special case of $\lie{g} =
\lie{sl}_n(\C)$ - was to prove the assertion that finite-dimensional
irreducible $\lie{g} \times \lie{g}$-modules have minimal types. Note
that such a simple module has highest weight in $\Lambda^+ \times
\Lambda^+ \subset (\lie{h} \times \lie{h})^*$, so we can write it as
$V(\lambda, \mu)$. It is clear that for all $X \in \lie{g}$, its image in
\[ \overline{\lie{g}} \subset \lie{g} \times \lie{g} \subset
\lie{U}(\lie{g} \times \lie{g}) = \lie{Ug} \otimes \lie{Ug} \]

\noindent is precisely $X \otimes 1 + 1 \otimes X$. Thus, restricting
$V(\lambda, \mu)$ to $\overline{\lie{g}}$ amounts to considering the
module $\vg(\lambda) \otimes \vg(\mu)$. In other words, the study of the
minimal type in this setting involves computing the direct summands of
the tensor product - i.e., computing Clebsch-Gordan coefficients. This
classical problem is the focus of the next section.

\subsection{Digression on minimal type due to Vogan}

We end this section with a few remarks on the notion of minimal type. The
more widely accepted notion of {\it minimal $K$-type} (or {\it lowest
$K$-type}) in the literature is due to Vogan \cite{Vo}, and differs from
the above notion (in \cite{PRV2}). More precisely, Vogan defines a weight
$\lambda \in \Lambda^+$ to be a {\it lowest $K$-type} for an admissible
Harish-Chandra $(G,K)$-module $M$, if:
\begin{itemize}
\item $V(\lambda)$ is a $K$-submodule of $M$; and
\item Among all $\mu$ such that $V(\mu)$ is a $K$-submodule of $M$, the
quantity $(\mu + 2 \rho, \mu + 2 \rho)$ is minimized at $\mu = \lambda$.
\end{itemize}

One can now ask what is the relation between these two notions. Note that
the definition due to Vogan guarantees existence of the minimal type (for
irreducible admissible representations), but not uniqueness. In fact,
uniqueness does not hold when $G$ is a (general) real group, such as
$SL(2,\R)$. However, uniqueness of the minimal $K$-type is guaranteed if
$G$ is a complex group; see \cite{Zh2} for more details.

On the other hand, the definition in \cite{PRV2} guarantees uniqueness
but not existence. If this version of the minimal type does exist, then
it is necessarily a minimal type due to Vogan. This can be shown using
the following generalization of \cite[Lemma 13.4.C]{H} (whose proof is
the same as that of {\it loc.~cit.}), with $A = \wt V(\mu)$ for any
finite-dimensional $K$-submodule $V(\mu)$ of $M$:

\begin{prop}
Suppose $A \subset \Lambda$ is $W$-stable, with highest weight $\mu$. (In
other words, $A \subset \mu - \Z_{\geq 0} \Pi$.) Fix $\lambda \in A$ and
$0 < c \in \R$. Then $\mu \in \Lambda^+$ and $A$ is finite; moreover,
\[  (\lambda + c \rho, \lambda + c \rho) \leq (\mu + c \rho, \mu + c
\rho), \]

\noindent with equality if and only if $\lambda = \mu$.
\end{prop}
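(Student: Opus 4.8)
The plan is to mimic the proof of \cite[Lemma 13.4.C]{H}, which is the $c=1$ case, and check that the argument only uses positivity of the scalar in front of $\rho$. First I would observe that since $A$ is $W$-stable and has a highest weight $\mu$ (meaning $A \subset \mu - \Z_{\geq 0}\Pi$), every element of $A$ is $W$-conjugate to a dominant element, and the dominant element of $A$ that is maximal in the usual partial order is $\mu$ itself; in particular $\mu \in \Lambda^+$, and $W\mu \subset A$ forces $A$ to meet every chamber, while $A \subset \mu - \Z_{\geq 0}\Pi$ and $W$-stability pin $A$ inside the convex hull $\conv(W\mu)$, which contains only finitely many weight-lattice points, so $A$ is finite. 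This disposes of the first assertions.

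For the inequality, write $\lambda = \mu - \sum_{i \in I} n_i \alpha_i$ with $n_i \in \Z_{\geq 0}$ (possible since $\lambda \in A$). Then expand
\[
(\mu + c\rho, \mu + c\rho) - (\lambda + c\rho, \lambda + c\rho) = (\mu - \lambda,\ \mu + \lambda + 2c\rho) = \Big(\sum_i n_i \alpha_i,\ \mu + \lambda + 2c\rho\Big).
\]
Now I would argue each pairing $(\alpha_i, \mu + \lambda + 2c\rho) \geq 0$: since $c > 0$ and $\rho$ is strictly dominant, $(\alpha_i, 2c\rho) > 0$, so it suffices that $(\alpha_i, \mu + \lambda) \geq 0$. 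For $\mu \in \Lambda^+$ this is clear. For $\lambda$ the key point is that, although $\lambda$ itself need not be dominant, $\lambda$ lies in the convex hull of $W\mu$ (by $W$-stability plus the highest-weight condition), and $(\alpha_i, w\mu)$ can be negative; so instead I would pass to the dominant conjugate. A cleaner route: replace $\lambda$ by its unique dominant $W$-conjugate $\lambda^+ \in \Lambda^+$, note $\lambda^+ \in A$ as well (by $W$-stability), and note $(\lambda + c\rho, \lambda + c\rho) \leq (\lambda^+ + c\rho, \lambda^+ + c\rho)$ is false in general — rather the opposite — so one must instead use that among all weights in a fixed $W$-orbit the quantity $(\nu + c\rho, \nu + c\rho)$ is \emph{maximized} at the dominant representative; combined with $\lambda^+ \leq \mu$ in the dominance order and both dominant, the string argument above (now applied to the pair $\lambda^+ \leq \mu$, both in $\Lambda^+$) gives $(\lambda^+ + c\rho, \lambda^+ + c\rho) \leq (\mu + c\rho, \mu + c\rho)$, and chaining the two inequalities yields the claim.

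For the equality case: if $(\lambda + c\rho, \lambda + c\rho) = (\mu + c\rho, \mu + c\rho)$ then both intermediate inequalities are equalities. Equality in the orbit-maximum step forces $\lambda = \lambda^+$ to be dominant; equality in $\sum_i n_i (\alpha_i, \mu + \lambda + 2c\rho) = 0$ with every summand $\geq 0$ and the $2c\rho$-contribution strictly positive whenever $n_i > 0$ forces every $n_i = 0$, i.e. $\lambda = \mu$. Conversely $\lambda = \mu$ gives equality trivially.

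The main obstacle is the sign bookkeeping for non-dominant $\lambda$: one has to be careful that the naive ``string'' estimate $(\alpha_i, \mu + \lambda + 2c\rho) \geq 0$ can fail for non-dominant $\lambda$, and the honest fix is the two-step comparison through the dominant conjugate $\lambda^+$, using that $(\,\cdot + c\rho,\,\cdot + c\rho)$ is Weyl-invariant as a function of the shifted weight $\lambda + c\rho$ under the $*$-twisted action but is maximized over an orbit at the dominant point (this is exactly where $c > 0$, equivalently strict dominance of $\rho$, is used, and it is the only place the hypothesis $c > 0$ rather than $c \geq 0$ matters). Everything else is the verbatim argument of \cite[Lemma 13.4.C]{H}.
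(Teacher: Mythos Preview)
Your approach is exactly what the paper indicates (it gives no separate proof, deferring to \cite[Lemma 13.4.C]{H} verbatim with $c\rho$ in place of $\rho$), and the substance is correct: reduce to the dominant $W$-conjugate $\lambda^+$ via the orbit inequality, then compare the two dominant weights $\lambda^+\leq\mu$ using the identity $(\mu+c\rho,\mu+c\rho)-(\lambda^++c\rho,\lambda^++c\rho)=\sum_i n_i(\alpha_i,\mu+\lambda^++2c\rho)$, where each summand is nonnegative and the $2c\rho$-contribution is strictly positive. The equality analysis is also correct.

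There is, however, one sentence you must fix because it contradicts itself. You write that ``$(\lambda + c\rho, \lambda + c\rho) \leq (\lambda^+ + c\rho, \lambda^+ + c\rho)$ is false in general --- rather the opposite,'' and in the very next clause you (correctly) assert that $(\nu+c\rho,\nu+c\rho)$ is \emph{maximized} over the $W$-orbit at the dominant representative --- which says precisely that the inequality you just called false is \emph{true}. And it is true: on a $W$-orbit $(\nu,\nu)$ is constant, so one is comparing $(\nu,\rho)$; since $\lambda^+-\lambda\in\Z_{\geq 0}\Pi$ and $(\alpha_i,\rho)=\tfrac{1}{2}(\alpha_i,\alpha_i)>0$, one gets $(\lambda,\rho)\leq(\lambda^+,\rho)$, hence the displayed inequality. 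Delete the ``is false --- rather the opposite'' clause and the proof reads cleanly. (The aside about $W$-invariance under a twisted action is a red herring and should also be dropped; nothing in the argument uses it.)
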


\section{Tensor products, minimal types, and the (K)PRV conjecture}

In this section, we discuss \cite[Section 2.2]{PRV2}, which contains
several results, as well as a related conjecture, that have been
extremely influential on subsequent research in the field. These results
and the conjecture have to do with the classical question of computing
tensor product multiplicities.
Although the primary motivation of Parthasarathy et al was to study
tensor products in order to prove the existence and uniqueness of minimal
types, some of these statements have been subsequently generalized and
have contributed to several aspects of the multiplicity problem. We will
list some of the relevant papers and results presently.

\subsection{Tensor product multiplicities and the PRV Theorem}

We start by recalling the notion of {\it Littlewood-Richardson
coefficients}. By Weyl's theorem of complete reducibility, given
$\lambda, \mu \in \Lambda^+$, we can decompose
\[ V(\lambda) \otimes V(\mu) = \bigoplus_{\nu \in \Lambda^+}
m^\nu_{\lambda,\mu} V(\nu), \]

\noindent where the multiplicities $m^\nu_{\lambda,\mu} =
m^\nu_{\mu,\lambda}$ are the coefficients in question, also known as
tensor product multiplicities. (In the rest of this article, we will
abuse notation and denote $\vg(\lambda)$ by $V(\lambda)$.) If
$m^\nu_{\lambda, \mu} > 0$, we say that $V(\nu)$ is a {\it component} of
$V(\lambda) \otimes V(\mu)$. For example, $V(\lambda + \mu)$ is always a
component, generated by $V(\lambda)_\lambda \otimes V(\mu)_\mu$, and
$m^{\lambda+\mu}_{\lambda,\mu} = 1$ for all $\lambda,\mu \in \Lambda^+$.

The determination of the multiplicities $m^\nu_{\lambda,\mu}$ is a
longstanding open problem in the literature - as is the simpler problem
of computing whether or not $m^\nu_{\lambda, \mu}$ is positive. Efforts
to answer these questions have been ongoing since even before
\cite{PRV2}. For instance, in his famous paper \cite{Kos1}, Kostant
proved his multiplicity formula, and also showed a necessary condition
for $V(\nu)$ to be a component: it must be of the form $\nu = \lambda +
\mu_1 \in \Lambda^+$ for some $\mu_1 \in \wt(V(\mu))$.
Moreover, $m^\nu_{\lambda,\mu} \leq \dim V(\mu)_{\nu - \lambda}$. By work
\cite{St} of Steinberg (using Kostant's multiplicity formula), the
following was also known:
\[ m^\nu_{\lambda,\mu} =  \sum_{w \in W} \sn(w) \dim V(\mu)_{w * \nu -
\lambda} = \sum_{w,w' \in W} \sn(w) \sn(w') \calp(w'(\mu + \rho) - w(\nu
+ \rho) + \lambda). \]

\noindent Here, $\calp : \Lambda^+ \to \N$ is the Kostant partition
function (which is also defined to be zero on $\Lambda \setminus
\Lambda^+$), and $\sn : W \to \{ \pm 1 \}$ is the sign homomorphism,
which is $-1$ on all simple reflections $s_i$.
Steinberg's results imply \cite{Kum5} that if $(\lambda + \mu')(h_i) \geq
-1$ for all $\mu' \in \wt(V(\mu))$ and $i \in I$, then
$m^\nu_{\lambda,\mu} = \dim V(\mu)_{\nu - \lambda}$. Kostant had shown a
special case of this result in \cite{Kos1}, where he assumed that
$(\lambda + \mu')(h_i) \geq 0$.\medskip

In \cite{PRV2}, the following multiplicity formula is proved. Given $\mu,
\nu \in \Lambda^+$ and $\gamma \in \lie{h}^*$, define:
\begin{eqnarray*}
V^+(\mu; \gamma, \nu) & := & \{ v \in V(\mu)_\gamma : e_i^{\nu(h_i) + 1}
v = 0\ \forall i \in I \},\\
V^-(\mu; \gamma, \nu) & := & \{ v \in V(\mu)_\gamma : f_i^{\nu(h_i) + 1}
v = 0\ \forall i \in I \}.
\end{eqnarray*}

\begin{theorem}[\cite{PRV2}]\label{Tprv1}
For all $\lambda, \mu, \nu \in \Lambda^+$,
\[ m^\nu_{\lambda, \mu} = \dim V^+(\mu; \nu - \lambda, \lambda) = \dim
V^+(\nu; \lambda + w_\circ \mu, - w_\circ \mu). \]

\noindent Now given $\gamma \in \lie{h}^*$, $\dim V^+(\mu; \gamma, \nu) =
\dim V^-(\mu; w_\circ \gamma, -w_\circ \nu)$.
\end{theorem}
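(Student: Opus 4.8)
The plan is to prove the multiplicity formula $m^\nu_{\lambda,\mu} = \dim V^+(\mu;\nu-\lambda,\lambda)$ first, and then deduce the remaining two equalities from symmetry considerations (duality and the identification of highest-weight vectors in a tensor product with highest-weight vectors of the factors). I would begin by recalling the elementary but crucial fact that for finite-dimensional $\lie{g}$-modules, the multiplicity $m^\nu_{\lambda,\mu}$ equals the dimension of the space of $\lie{g}$-highest-weight vectors of weight $\nu$ inside $V(\lambda)\otimes V(\mu)$, i.e.
\[
m^\nu_{\lambda,\mu} = \dim\{\, x \in (V(\lambda)\otimes V(\mu))_\nu : e_i\cdot x = 0\ \forall i \in I \,\}.
\]
Since the highest weight $\lambda$ of $V(\lambda)$ is extremal, the weight space $(V(\lambda)\otimes V(\mu))_\nu$ is spanned by tensors $v_\lambda \otimes w$ together with tensors $u\otimes w'$ where $u$ has weight strictly below $\lambda$; the key first step is to show that any highest-weight vector $x$ of weight $\nu$, after projecting appropriately, can be normalized to have the form $x = v_\lambda\otimes w + (\text{lower terms in the first factor})$ with $w\in V(\mu)_{\nu-\lambda}$, and that $x$ is uniquely determined by its ``leading term'' $w$. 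This is the PRV insight: the map $x \mapsto w$ is injective on highest-weight vectors of weight $\nu$.

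The second step is to characterize exactly which $w \in V(\mu)_{\nu-\lambda}$ arise as leading terms. Applying $e_i$ to $x = v_\lambda\otimes w + \cdots$ and using that $e_i v_\lambda = 0$, the leading (in the first factor) contribution of $e_i\cdot x$ is $v_\lambda \otimes (e_i w)$ plus terms supported on strictly lower weights of $V(\lambda)$. Iterating $e_i$, one finds that the obstruction to completing $w$ to a genuine highest-weight vector $x$ is governed by whether $e_i^{\lambda(h_i)+1} w = 0$ for each $i$: the point is that in $V(\lambda)$ one has $f_i^{\lambda(h_i)+1}v_\lambda = 0$ (the defining Serre-type relation for the highest-weight vector in the $\lie{sl}_2$-string through $v_\lambda$), so the ``correction terms'' needed to kill $e_i\cdot x$ live in the span of $f_i^k v_\lambda \otimes (\cdots)$ for $1\le k\le \lambda(h_i)$, and this span is large enough precisely when $e_i^{\lambda(h_i)+1}w = 0$. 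Thus the leading term $w$ ranges exactly over $V^+(\mu;\nu-\lambda,\lambda)$, giving $m^\nu_{\lambda,\mu} = \dim V^+(\mu;\nu-\lambda,\lambda)$. I expect this inductive ``lifting'' argument — constructing the full highest-weight vector $x$ from its leading term $w$, with the correction terms, and verifying the vanishing condition is both necessary and sufficient — to be the main obstacle; it requires a careful filtration/induction on the weights of $V(\lambda)$ appearing in the first tensor factor.

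For the remaining equalities, the last identity $\dim V^+(\mu;\gamma,\nu) = \dim V^-(\mu;w_\circ\gamma,-w_\circ\nu)$ follows by transporting along the Chevalley involution (or the duality $V(\mu)^*\cong V(-w_\circ\mu)$): under the isomorphism $V(\mu)\cong V(-w_\circ\mu)^*$, the $\gamma$-weight space goes to the $(-\gamma)$-weight space of the dual, the roles of $e_i$ and $f_i$ swap up to the diagram automorphism $-w_\circ$, and the condition $e_i^{\nu(h_i)+1}v = 0$ is exchanged with $f_j^{(-w_\circ\nu)(h_j)+1}(\cdot) = 0$; chasing weights yields the stated indices. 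Finally, the symmetric formula $m^\nu_{\lambda,\mu} = \dim V^+(\nu;\lambda+w_\circ\mu,-w_\circ\mu)$ comes from the standard reciprocity $m^\nu_{\lambda,\mu} = m^{-w_\circ\nu}_{-w_\circ\lambda,\mu} = [\,V(\lambda)\otimes V(\mu)\otimes V(-w_\circ\nu) : \text{trivial}\,]$, which is symmetric in the three arguments $\lambda$, $\mu$, $-w_\circ\nu$; applying the already-proved formula with the pair $(\mu,\nu)$ playing the role of $(\lambda,\mu)$ — more precisely, to $m^{-w_\circ\lambda}_{\mu,-w_\circ\nu}$ rearranged — and simplifying the weight $\nu - \lambda$ analogue gives $\dim V^+(\nu;\lambda+w_\circ\mu,-w_\circ\mu)$ after a short computation with $w_\circ$ and the co-root pairing. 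I would present the first formula in full detail and treat these last two as corollaries via duality and reciprocity.
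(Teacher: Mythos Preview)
The paper under review is an expository survey of \cite{PRV2} and does not itself supply a proof of Theorem~\ref{Tprv1}; the result is stated and attributed to \cite{PRV2} (with the historical remark that Kostant had obtained it earlier, unpublished), and the surrounding text only illustrates it via Example~\ref{Eprv} and discusses its advantages over the Brauer--Steinberg--Klimyk formulae. Consequently there is no in-paper argument against which to compare your proposal.

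That said, your strategy is a well-known and essentially correct route to the first identity. The injectivity of the ``leading term'' map $x\mapsto w$ is straightforward, and the necessity of $e_i^{\lambda(h_i)+1}w=0$ follows as you indicate. The point you flag as the main obstacle --- surjectivity, i.e.\ lifting an admissible $w$ to a genuine highest-weight vector $x$ --- is indeed where the content lies; one clean way to organize it (and to avoid worrying about interactions between corrections for different $i$) is to work not directly in $V(\lambda)\otimes V(\mu)$ but to use the surjection $M(\lambda)\twoheadrightarrow V(\lambda)$, so that highest-weight vectors of weight $\nu$ in $V(\lambda)\otimes V(\mu)$ are identified with $\Hom_{\lie g}(M(\nu),V(\lambda)\otimes V(\mu))$, and then exploit the standard Verma-flag filtration of $M(\lambda)\otimes V(\mu)$ together with the kernel relations $f_i^{\lambda(h_i)+1}m_\lambda=0$ in $V(\lambda)$. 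This packages your ``filtration/induction on the weights of $V(\lambda)$'' into a single homological step. Your derivations of the last two equalities via the Chevalley involution and the trivial-component reciprocity $m^\nu_{\lambda,\mu}=\dim(V(\lambda)\otimes V(\mu)\otimes V(\nu)^*)^{\lie g}$ are the standard ones and are fine.
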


Here is a typical application of this result, which shows how to compute
multiplicities.

\begin{ex}\label{Eprv}
Suppose $\lambda,\nu \in \Lambda^+$. If $[V(\lambda) \otimes V(\lambda)^*
: V(\nu)] > 0$, then $\lambda - w_\circ \lambda - \nu \in \Z_{\geq 0}
\Pi$ by Kostant's results, which implies that $\nu \in \Lambda^+ \cap \Z
\Pi$. For every such $\nu$, Theorem \ref{Tprv1} now says:
\[ m^\nu_{\lambda, - w_\circ \lambda} = \dim V^+(\nu; 0, \lambda) := \dim
\{ v \in V(\nu)_0 : e_i^{\lambda(h_i) + 1} v = 0 \ \forall i \in I \}. \]

\noindent Thus if $\lambda(h_i)$ is large enough for all $i$ (e.g.,
$\lambda = n \rho$ for $n \gg 0$), then
\[ [V(\lambda) \otimes V(\lambda)^* : V(\nu)] = \dim V(\nu)_0 > 0, \qquad
\forall \nu \in \Lambda^+ \cap \Z \Pi. \]

\noindent where the last inequality follows by a result from \cite{Ha},
used below to prove Proposition \ref{Pmax}. \qed
\end{ex}

The advantage of the ``PRV Theorem" \ref{Tprv1} over some of the earlier
formulae in the literature is that it calculates the multiplicities
directly and without cancellation. For instance, note that the above
result of Steinberg involves a double summation over the Weyl group - and
cancellations of terms - and hence is not suitable for practical
computations. Several years prior to \cite{PRV2}, Brauer had proposed
another such formula in \cite{Br}; it is similar to a result of Klimyk in
\cite{Kl}, which appeared in the same year as \cite{PRV2}. The result is
stated as Exercise $24.9$ in \cite{H}\footnote{See also: {\tt
http://mathoverflow.net/questions/85593/}}. It computes the
multiplicities $m^\nu_{\lambda,\mu}$ as sums of dimensions $\dim
V(\mu)_{\mu'}$, but with coefficients of $\pm 1$ and $0$, which again
implies the need to perform cancellation (of formal characters).

It is mentioned in \cite{PRV2} that Kostant had obtained Theorem
\ref{Tprv1} previously but had not published it; for a historical account
of this result, see \cite{Kos4}.

The PRV Theorem \ref{Tprv1} has been widely used and generalized in the
literature. In \cite{CP}, Chari and Pressley extend a special case of
this result to show that simple integrable modules over affine Lie
algebras are quotients of tensor products. In \cite{Jo2}, Joseph studies
a similar result for a general symmetrizable (quantum) Kac-Moody Lie
algebra - as does Mathieu in \cite{Mat1}.
Among other applications, Young and Zegers start from Theorem \ref{Tprv1}
in \cite{YZ} and relate Dorey's rule to $q$-characters of fundamental
representations of quantum affine algebras of type ADE. Panyushev and
Yakimova study variants and consequences of the result in \cite{PY}.

From a personal viewpoint, the author has used the PRV Theorem in his
paper \cite{CKR} with Chari and Ridenour, to provide examples of families
of finite and infinite-dimensional Koszul algebras which naturally arise
out of module categories over semidirect products $\lie{g} \ltimes
V(\lambda)$. The result was also used by Chari and Greenstein
\cite{CG1,CG3} in the study of representations of the truncated current
algebra $\lie{g}[t] / (t^2)$, as well as in other works of Chari and her
collaborators, and of Greenstein. These papers have applications in the
study of Kirillov-Reshetikhin modules over quantum affine algebras.

\subsection{Minimal type}

We again start by considering the decomposition of the tensor product
into its simple module components. Consider the example where $\lie{g} =
\lie{sl}_2(\C)$ and $0 \leq \mu \leq \lambda \in \Z_{\geq 0}$. (Once
again, we abuse notation and use $\lambda \in \Z$ to refer to $\lambda
\cdot \varpi \in \Lambda$.) Recall the well-known {\it Clebsch-Gordan
formula} for $\lie{sl}_2(\C)$:
\begin{equation}\label{Eclebsch}
V(\lambda) \otimes V(\mu) = V(\lambda + \mu) \oplus V(\lambda + \mu - 2)
\oplus \dots \oplus V(\lambda - \mu).
\end{equation}

\noindent We see that there are two distinguished components in this
direct sum:
\begin{itemize}
\item The ``largest" component is $V(\lambda + \mu)$, in that every
highest weight occurring on the right, belongs to $\wt V(\lambda + \mu)$.
This is called the {\it Cartan component} or the {\it maximal type}, and
occurs with multiplicity $1$. It is generated by the one-dimensional
vector space $V(\lambda)_\lambda \otimes V(\mu)_\mu$, which is the tensor
product of the two highest weight spaces.

\item The ``smallest" component is $V(\lambda - \mu)$, in that $\lambda -
\mu$ is a weight of every summand occurring on the right. This is called
the {\it PRV component} (after the authors of \cite{PRV2}) or the {\it
minimal type}, and it also occurs with multiplicity $1$.
\end{itemize}

It is reasonable to ask if these results extend to all semisimple
$\lie{g}$. Remarkably, the authors of \cite{PRV2} found the answer of
this question to be positive! To understand it, one must first make sense
of what the minimal type is for general $\lie{g}$. Note above that we
could have interchanged $\lambda$ and $\mu$, since the tensor product is
``commutative" (i.e., the Hopf algebra $\lie{Ug}$ is cocommutative).
Thus, to choose the minimal type, one chooses the dominant integral
weight from among $\{ \lambda - \mu, \mu - \lambda \} = W(\lambda +
w_\circ \mu)$. Supporting evidence is obtained from Theorem \ref{Tprv1},
where if $\lambda + w_\circ \mu \in \Lambda^+$, then substituting it for
$\nu$ yields:
\begin{equation}\label{E43}
m^{\lambda + w_\circ \mu}_{\lambda,\mu} = \dim V^+(\lambda + w_\circ \mu;
\lambda + w_\circ \mu, -w_\circ \mu) = 1.
\end{equation}

\noindent This led Varadarajan and Varadhan to generalize the existence
of the minimal type to $\lie{g} = \lie{sl}_n(\C)$ for all $n$, while they
were at the Indian Statistical Institute, Kolkata. (See \cite{Var} for a
very nice historical account of the development of \cite{PRV2}.)
Subsequently in \cite{PRV2}, the authors extended the result to arbitrary
semisimple $\lie{g}$, and obtained the previously sought-for existence
and unique multiplicity of the minimal type. Here is their result.

\begin{theorem}[\cite{PRV2}]\label{Tprv2}
Suppose $\lie{g}$ is semisimple, and $\lambda, \mu \in \Lambda^+$. Given
$\nu \in \Lambda^+$, define $\overline{\nu}$ to be the unique
$W$-translate of $\nu$ that lies in $\Lambda^+$. Then
$m^{\overline{\lambda + w_\circ \mu}}_{\lambda,\mu} = 1$. Moreover,
\[ m^\nu_{\lambda, \mu} > 0 \implies \overline{\lambda + w_\circ \mu} \in
\wt V(\nu). \]
\end{theorem}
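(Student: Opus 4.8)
The plan is to prove Theorem~\ref{Tprv2} by combining the PRV multiplicity formula (Theorem~\ref{Tprv1}) with a weight-lattice convexity argument of the type already used in Proposition~\ref{Pmax}.

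\textbf{Step 1: The multiplicity-one statement.}
First I would establish $m^{\overline{\lambda + w_\circ \mu}}_{\lambda,\mu} = 1$. Write $\nu_0 := \overline{\lambda + w_\circ \mu} \in \Lambda^+$, so $\nu_0 = w(\lambda + w_\circ \mu)$ for some $w \in W$. Applying the first identity of Theorem~\ref{Tprv1} with $\nu = \nu_0$ gives $m^{\nu_0}_{\lambda,\mu} = \dim V^+(\mu; \nu_0 - \lambda, \lambda)$. It is cleaner, however, to use the second expression there, $m^{\nu_0}_{\lambda,\mu} = \dim V^+(\nu_0; \lambda + w_\circ \mu, -w_\circ \mu)$: the weight $\lambda + w_\circ \mu$ of $V(\nu_0)$ is an extremal weight (it is the $W$-translate $w^{-1}\nu_0$ of the highest weight), hence its weight space is one-dimensional and is spanned by $w^{-1}$ applied to a highest weight vector. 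One then checks that this extremal weight vector is annihilated by the relevant powers of the $e_i$: since $\lambda + w_\circ \mu = w^{-1}\nu_0$, the conditions $e_i^{(-w_\circ \mu)(h_i)+1} v = 0$ are exactly the $\lie{sl}_2$-type vanishing conditions forced by the string through the extremal weight, using $(-w_\circ\mu)(h_i) \ge 0$. So $\dim V^+(\nu_0; \lambda + w_\circ \mu, -w_\circ \mu) = 1$, giving $m^{\nu_0}_{\lambda,\mu} = 1$. (Equation~\eqref{E43} is precisely the $w=1$ case of this, and the general case is obtained by transporting by $w$; alternatively one reduces to~\eqref{E43} by replacing $\nu$ by $\overline\nu$ and noting $\dim V^+(\mu;\gamma,\nu) = \dim V^+(\mu; \gamma, \overline\nu)$-type invariances, but the extremal-vector argument is the most transparent.)

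\textbf{Step 2: The ``minimality'' statement.}
Next, suppose $m^\nu_{\lambda,\mu} > 0$ for some $\nu \in \Lambda^+$; I must show $\overline{\lambda + w_\circ \mu} \in \wt V(\nu)$. By Kostant's necessary condition (recalled just before Theorem~\ref{Tprv1}), $\nu = \lambda + \mu_1$ for some $\mu_1 \in \wt V(\mu)$. Since $\wt V(\mu)$ is $W$-stable, $w_\circ \mu \in \wt V(\mu)$ as well, and in fact $w_\circ\mu$ is the lowest weight, so every $\mu_1 \in \wt V(\mu)$ satisfies $\mu_1 - w_\circ\mu \in \Z_{\ge 0}\Pi$. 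Hence $\nu - (\lambda + w_\circ \mu) = \mu_1 - w_\circ\mu \in \Z_{\ge 0}\Pi$, i.e.\ $\lambda + w_\circ\mu \preceq \nu$ in the root order. Now I want to upgrade this to $\overline{\lambda + w_\circ\mu} \in \wt V(\nu)$. The key fact is: if $\xi \in \Lambda$ satisfies $\xi \preceq \nu$ with $\nu \in \Lambda^+$, and additionally $\overline\xi$ (the dominant $W$-conjugate of $\xi$) satisfies $\overline\xi \preceq \nu$, then $\overline\xi \in \wt V(\nu)$ --- this is the standard characterization $\wt V(\nu) = \{\eta \in \Lambda : \overline\eta \preceq \nu\}$ (e.g.\ \cite[\S13.4, 21.3]{H}). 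So it suffices to show $\overline{\lambda + w_\circ\mu} \preceq \nu$.

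\textbf{Step 3: The convexity input (main obstacle).}
The remaining point --- and I expect this to be the crux --- is to pass from $\lambda + w_\circ\mu \preceq \nu$ to $\overline{\lambda + w_\circ\mu} \preceq \nu$. In general, replacing a weight by its dominant conjugate need \emph{not} decrease it in the root order, so something about the specific situation must be used. Here is where the Cartan component enters: $V(\lambda+\mu)$ is always a component, so $\lambda + \mu \preceq \nu$ is \emph{false} in general --- rather, one uses that all components $\nu$ of $V(\lambda)\otimes V(\mu)$ satisfy $\nu \preceq \lambda+\mu$, i.e.\ $\nu$ lies in the ``interval'' between $\overline{\lambda+w_\circ\mu}$ and $\lambda+\mu$. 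Concretely: $\lambda + \mu_1 = \nu$ with $\mu_1 \in \wt V(\mu) \subset \conv(W\mu)$, so $\nu - \lambda = \mu_1 \in \conv(W\mu)$. Then $\nu \in \lambda + \conv(W\mu) = \conv(W\text{-orbit stuff})$, and since $\nu$ is dominant, a standard convexity lemma (the ``saturation''/Proposition~\ref{Pmax}-style argument: a dominant weight in $\conv(W\eta)$ is $\preceq \eta$, applied after shifting) yields $\overline{\lambda + w_\circ\mu} \preceq \nu$. More precisely I would argue: $\lambda + w_\circ\mu$ is the \emph{unique} vertex of $\lambda + \conv(W\mu)$ in the open Weyl chamber opposite to where $\lambda+\mu$ sits, hence among all $W$-conjugates of points of this polytope it is the one farthest in the $\preceq$-order from $\lambda+\mu$; since $\nu \preceq \lambda + \mu$ and $\nu$ is dominant, comparing $\nu$ with the dominant conjugate $\overline{\lambda+w_\circ\mu}$ via the chamber geometry gives $\overline{\lambda + w_\circ\mu} \preceq \nu$. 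Assembling Steps 1--3 proves the theorem; the delicate part is phrasing the convexity/chamber argument of Step~3 cleanly, and I would model it directly on the proof of Proposition~\ref{Pmax} (equivalently \cite[Lemma 13.4.C]{H}), since that is exactly the tool the paper has set up for comparing $W$-translates in the root order.
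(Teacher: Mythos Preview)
Your Step 1 is essentially correct. The extremal weight space $V(\nu_0)_{\lambda+w_\circ\mu}$ is one-dimensional, and the vanishing $e_i^{(-w_\circ\mu)(h_i)+1}v=0$ follows once one knows that an extremal vector $v_\xi$ satisfies $e_i^{\max(0,-\xi(h_i))+1}v_\xi=0$ (e.g.\ via the norm inequality $|\xi+\alpha_i|^2>|\nu_0|^2$ when $\xi(h_i)\ge 0$); then $\lambda(h_i)\ge 0$ gives $(-w_\circ\mu)(h_i)\ge -\xi(h_i)$ in the remaining case.

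Steps 2--3, however, contain a genuine gap. From $m^\nu_{\lambda,\mu}>0$ you retain only Kostant's necessary condition $\nu-\lambda\in\wt V(\mu)$, and then attempt to deduce $\overline{\lambda+w_\circ\mu}\preceq\nu$ by convexity. But Kostant's condition is strictly weaker than positivity of the multiplicity, and under this weaker hypothesis your desired conclusion is simply false: in $\lie{sl}_2$ take $\lambda=0$, $\mu=2\varpi$, $\nu=0$; then $\nu-\lambda=0\in\wt V(2\varpi)$ and $\nu\in\Lambda^+$, yet $\overline{\lambda+w_\circ\mu}=2\varpi\not\preceq 0$. So no argument modelled on Proposition~\ref{Pmax} can work from the data you have kept. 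The fix is to reuse the very formula you invoked in Step~1: the second identity of Theorem~\ref{Tprv1} reads $m^\nu_{\lambda,\mu}=\dim V^+(\nu;\lambda+w_\circ\mu,-w_\circ\mu)$, and since $V^+(\nu;\lambda+w_\circ\mu,-w_\circ\mu)\subset V(\nu)_{\lambda+w_\circ\mu}$, positivity of the multiplicity immediately gives $\lambda+w_\circ\mu\in\wt V(\nu)$, hence $\overline{\lambda+w_\circ\mu}\in\wt V(\nu)$ by $W$-invariance of the weight set. No convexity is needed.
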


\noindent (More generally - say by the result from \cite{KLV} stated in
the next proof below - $\wt V(\overline{\lambda + w_\circ \mu}) \subset
\wt V(\nu)$ for all such $\nu$.) Thus, the sought-for minimal type exists
and possesses the desired properties. We will see later, how this leads
to the construction of interesting polynomial maps and
infinite-dimensional Banach space representations of $G$.

For completeness, we remark that the ``maximal type" also exists in
general:

\begin{prop}\label{Pmax}
If $\lie{g}$ is semisimple and $\lambda, \mu \in \Lambda^+$, then
$m^{\lambda + \mu}_{\lambda, \mu} = 1$. Moreover,
\[ m^\nu_{\lambda,\mu} > 0 \implies \nu \in \wt V(\lambda + \mu). \]

\noindent More generally, $\wt V(\lambda) \otimes V(\mu) = \wt V(\lambda
+ \mu)$.
\end{prop}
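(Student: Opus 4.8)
The plan is to dispatch the three assertions in turn: the first by an elementary highest‑weight‑vector observation, the second by combining Kostant's necessary condition for components (recalled above from \cite{Kos1}) with the standard description of the weights of a simple module, and the third by a ``saturation'' argument for weight sets.

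First I would show $m^{\lambda+\mu}_{\lambda,\mu}=1$. Every weight of $V(\lambda)$ lies in $\lambda-\Z_{\geq 0}\Pi$ and every weight of $V(\mu)$ in $\mu-\Z_{\geq 0}\Pi$, so $\lambda+\mu$ is the unique maximal weight of $V(\lambda)\otimes V(\mu)$ and its weight space is one‑dimensional, spanned by $v_\lambda\otimes v_\mu$ for highest weight vectors $v_\lambda\in V(\lambda)_\lambda$, $v_\mu\in V(\mu)_\mu$. Since $e_i\cdot(v_\lambda\otimes v_\mu)=(e_iv_\lambda)\otimes v_\mu+v_\lambda\otimes(e_iv_\mu)=0$ for all $i\in I$, this vector is a highest weight vector of weight $\lambda+\mu$; it generates a submodule isomorphic to $V(\lambda+\mu)$, and because the $(\lambda+\mu)$‑weight space of the tensor product is one‑dimensional there is exactly one such summand. (This is also the case $\nu=\lambda+\mu$ of Theorem \ref{Tprv1}, which gives $m^{\lambda+\mu}_{\lambda,\mu}=\dim V^+(\mu;\mu,\lambda)$, and $V(\mu)_\mu$ is one‑dimensional and annihilated by every $e_i$.)

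Next, suppose $m^\nu_{\lambda,\mu}>0$. By Kostant's result, $\nu=\lambda+\mu_1$ for some $\mu_1\in\wt V(\mu)$, and since $\mu$ is dominant one has $\mu-\mu_1\in\Z_{\geq 0}\Pi$, hence $(\lambda+\mu)-\nu=\mu-\mu_1\in\Z_{\geq 0}\Pi$. As $\nu$ is itself dominant integral, the standard criterion for the weights of a simple highest weight module (see \cite[\S13.4, \S21.3]{H}) --- a dominant $\nu\in\Lambda^+$ lies in $\wt V(\Lambda)$, $\Lambda\in\Lambda^+$, exactly when $\Lambda-\nu\in\Z_{\geq 0}\Pi$ --- yields $\nu\in\wt V(\lambda+\mu)$.

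Finally, $\wt V(\lambda+\mu)\subseteq\wt\bigl(V(\lambda)\otimes V(\mu)\bigr)$ is immediate since $V(\lambda+\mu)$ is a component. For the reverse inclusion I would write $\wt\bigl(V(\lambda)\otimes V(\mu)\bigr)=\bigcup_{\nu\,:\,m^\nu_{\lambda,\mu}>0}\wt V(\nu)$; by the previous step each such $\nu$ is a dominant weight of $V(\lambda+\mu)$, and then the saturation property --- if $\nu\in\Lambda^+$ lies in $\wt V(\Lambda)$ then $\wt V(\nu)\subseteq\wt V(\Lambda)$, which reduces to the $\Z_{\geq 0}\Pi$‑criterion above applied to dominant conjugates together with $W$‑invariance of weight sets --- gives $\wt V(\nu)\subseteq\wt V(\lambda+\mu)$ for every component. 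One may instead simply cite here the result of \cite{KLV} referenced after Theorem \ref{Tprv2}. The only step that genuinely invokes structure theory is this last saturation statement, but it is entirely standard and recorded in \cite{H}; the rest is bookkeeping with the dominance order and Kostant's condition, so I do not expect a real obstacle.
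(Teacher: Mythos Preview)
Your argument is correct and structurally matches the paper's: both write $\wt\bigl(V(\lambda)\otimes V(\mu)\bigr)=\bigcup_{\nu}\wt V(\nu)$ over components, invoke Kostant's necessary condition to get $(\lambda+\mu)-\nu\in\Z_{\geq 0}\Pi$, and then appeal to a ``saturation'' fact to conclude $\wt V(\nu)\subset\wt V(\lambda+\mu)$. The only real difference is in that last step: you use the Humphreys criterion that a dominant $\nu$ lies in $\wt V(\Lambda)$ iff $\Lambda-\nu\in\Z_{\geq 0}\Pi$ (plus $W$-invariance), whereas the paper goes through the convex-hull description $\wt V(\Lambda)=(\Lambda-\Z\Pi)\cap\conv(W\Lambda)$ from \cite{Ha} together with the \cite{KLV} equivalence $\Lambda-\nu\in\Z_{\geq 0}\Pi\Leftrightarrow\conv(W\nu)\subset\conv(W\Lambda)$. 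Your route is a touch more direct; the paper's has the mild advantage of making the geometric picture explicit. You also supply the easy verifications of $m^{\lambda+\mu}_{\lambda,\mu}=1$ and of the second assertion, which the paper simply omits.
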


\noindent Inductively, $\wt \otimes_{i=1}^n V(\lambda_i) = \wt V(\sum_i
\lambda_i)$ if all $\lambda_i \in \Lambda^+$.

\begin{proof}
We only show that $\wt V(\lambda) \otimes V(\mu) \subset \wt V(\lambda +
\mu)$. Note that $\wt V(\lambda) \otimes V(\mu) = \bigcup_\nu \wt
V(\nu)$, where we run over all $\nu \in \Lambda^+$ such that
$m^\nu_{\lambda, \mu} > 0$.
Now from Kostant's results mentioned above, every such $\nu$ is of the
form $\lambda + \mu'$, where $\mu' \in \wt V(\mu)$. Hence it suffices to
prove that
\[ \mu' \in \wt V(\mu),\ \lambda + \mu' \in \Lambda^+ \implies \wt
V(\lambda + \mu') \subset \wt V(\lambda + \mu). \]

We now quote a result from \cite{KLV}, which says that given $\lambda,
\mu \in \Lambda^+$, $\lambda - \mu \in \Z_{\geq 0} \Pi$ if and only if
$\conv(W \mu) \subset \conv(W \lambda)$, where $\conv$ denotes the convex
hull.
Applying this with $\mu \leadsto \lambda + \mu', \lambda \leadsto \lambda
+ \mu$, $\conv(W(\lambda + \mu')) \subset \conv(W(\lambda + \mu))$.
Now given $\nu' \in \wt V(\lambda + \mu')$, it is clear that $(\lambda +
\mu) - \nu' \in \Z_{\geq 0} \Pi$.
Recall \cite[Theorem 7.41]{Ha}, which says that for all $\lambda \in
\Lambda^+$, $\displaystyle \wt V(\lambda) = (\lambda - \Z \Pi) \cap
\conv(W \lambda)$. Applying this first with $\lambda \leadsto \lambda +
\mu'$ and then with $\lambda \leadsto \lambda + \mu$, we get that $\nu'
\in \conv(W(\lambda + \mu')) \subset \conv(W(\lambda + \mu))$, so $\nu'
\in \wt V(\lambda + \mu)$ as desired.
\end{proof}

\subsection{The (K)PRV conjecture and generalized PRV components}

We now discuss a vast generalization of Theorem \ref{Tprv2}, which was
conjectured by Parthasarathy et al, extended by Kostant and refined by
Verma, and proved by Kumar in \cite{Kum1,Kum3}. The ``PRV Conjecture" has
been the subject of much study and numerous papers in the literature, and
continues to attract interest, as we point out below.

To state the conjecture, recall the following facts from above: given
$\lambda, \mu \in \Lambda^+$,
\begin{itemize}
\item $m^{\lambda + 1 \cdot \mu}_{\lambda,\mu} = 1$.

\item Equation \eqref{E43} says:
$\lambda + w_\circ \mu \in \Lambda^+ \implies m^{\lambda + w_\circ
\mu}_{\lambda,\mu} = 1$.
\end{itemize}

\noindent There is a common generalization of these assertions to
arbitrary $w \in W$, which is mentioned in \cite{Kum5,PY}. Namely, given
$\lambda, \mu \in \Lambda^+$ and $w \in W$,
\[ \lambda + w \mu \in \Lambda^+ \implies m^{\lambda + w
\mu}_{\lambda,\mu} = 1. \]

\noindent It is natural to ask what happens when $\lambda + w \mu \notin
\Lambda^+$. In light of Theorem \ref{Tprv2}, a natural guess would be to
ask if $m^{\overline{\lambda + w \mu}}_{\lambda,\mu} = 1$, or at least,
if this multiplicity is positive. This is known as the {\it PRV
Conjecture} in the literature.

Kostant significantly strengthened the PRV conjecture in the following
way. Recall that the formal character of each finite-dimensional module
$V(\lambda)$ is $W$-invariant, which implies that for all $w \in W$,
$\dim V(\lambda)_{w \lambda} = 1$. Suppose $v_{w \lambda}$ and $v'_{w
\mu}$ are nonzero vectors that span the ``extremal weight
spaces" $V(\lambda)_{w \lambda}$ and $V(\mu)_{w \mu}$ respectively, for
all $\lambda, \mu \in \Lambda^+$ and $w \in W$. It is then clear that
$v_\lambda \otimes v'_\mu$ generates the copy of the ``maximal type"
$V(\lambda + \mu)$ inside $V(\lambda) \otimes V(\mu)$. Now consider the
minimal type: is it generated by $v_\lambda \otimes v'_{w_\circ \mu}$?
The answer is no: in fact, this vector generates the entire module!
In other words, $\lie{Ug}(v_\lambda \otimes v'_{w_\circ \mu}) =
V(\lambda) \otimes V(\mu)$. Moreover, Theorem \ref{Tprv2} says that
exactly one copy of $V(\overline{\lambda + w_\circ \mu})$ sits in it.

It is now possible to generalize both of these statements. Given any $w
\in W$, consider the $\lie{g}$-submodule generated by $v_\lambda \otimes
v'_{w \mu}$. Does it contain a (unique) copy of $V(\overline{\lambda + w
\mu})$? This is the subject of the {\it KPRV conjecture}, which was
formulated by Kostant and proved by Kumar in \cite{Kum1} in the
semisimple case.

\begin{theorem}[\cite{Kum1,Kum2,Mat1}]\label{Tkprv}
Suppose $\lie{g}$ is semisimple, $\lambda, \mu \in \Lambda^+$, and $w \in
W$. Then the module $V(\overline{\lambda + w \mu})$ appears with
multiplicity $1$ in the submodule $\lie{Ug}(v_\lambda \otimes v'_{w
\mu})$ of $V(\lambda) \otimes V(\mu)$.
\end{theorem}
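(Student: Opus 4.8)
The plan is to follow Kumar's geometric argument \cite{Kum1}, transporting the question into the cohomology of line bundles on the flag variety $X := G/B$ and its Schubert subvarieties. Since $V(\lambda)\otimes V(\mu)$ is semisimple, the assertion is equivalent to $\dim\Hom_{\lie{g}}(V(\overline{\lambda+w\mu}),\,M_w)=1$, where $M_w := \lie{Ug}\cdot(v_\lambda\otimes v'_{w\mu})$. Two extreme cases already orient the argument: for $w=e$ the vector $v_\lambda\otimes v'_\mu$ is a highest weight vector of weight $\lambda+\mu$, so $M_e=V(\lambda+\mu)$ is the Cartan component and the multiplicity is $1$ by Proposition \ref{Pmax}; for $w=w_\circ$ the claim refines the minimal-type statement of Theorem \ref{Tprv2}. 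This suggests handling one simple reflection at a time, which on $X$ amounts to passing through minimal parabolics.

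First I would translate the problem geometrically. By the Borel--Weil theorem, $H^0(X,\mathcal{L}_\nu)\cong V(\nu)^{*}$ for $\nu\in\Lambda^+$ and a suitable normalization of $\mathcal{L}_\nu$ (with higher cohomology vanishing), so the K\"unneth formula gives $V(\lambda)^{*}\otimes V(\mu)^{*}\cong H^0(X\times X,\ \mathcal{L}_\lambda\boxtimes\mathcal{L}_\mu)$. Under this identification, the functional ``pair against $v_\lambda\otimes v'_{w\mu}$'' is, up to scalars, evaluation at a $T$-fixed point $z_w\in X\times X$ whose diagonal $G$-orbit closure is the $G$-stable subvariety $Z_w\cong G\times_B X_w$; here $X_w=\overline{BwB/B}$ is a Schubert variety, $Z_w\to X$ (first projection) is a fibre bundle with fibre $X_w$, with $Z_e$ the diagonal and $Z_{w_\circ}=X\times X$. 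A $G$-translation argument shows that the largest $\lie{g}$-submodule of $V(\lambda)^{*}\otimes V(\mu)^{*}$ killed by $v_\lambda\otimes v'_{w\mu}$ is exactly the space of sections vanishing on $Z_w$; since the restriction $H^0(X\times X,\mathcal{L}_\lambda\boxtimes\mathcal{L}_\mu)\twoheadrightarrow H^0(Z_w,(\mathcal{L}_\lambda\boxtimes\mathcal{L}_\mu)|_{Z_w})$ is surjective (the obstructing $H^1$ vanishes, by the cohomology theory of Schubert varieties), I obtain a $\lie{g}$-module isomorphism $M_w^{*}\cong H^0(Z_w,(\mathcal{L}_\lambda\boxtimes\mathcal{L}_\mu)|_{Z_w})$.

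Next I would push forward along $Z_w\to X$. Using Kempf-type vanishing $H^{>0}(X_w,\mathcal{L}_\mu|_{X_w})=0$ so that the higher direct images vanish, Demazure's theorem identifies the direct image with the $G$-homogeneous vector bundle $\mathcal{V}_w(\mu)$ on $X=G/B$ whose fibre over $eB$ is the dual Demazure module $V_w(\mu)^{*}:=H^0(X_w,\mathcal{L}_\mu|_{X_w})$. Hence $M_w^{*}\cong H^0(X,\mathcal{L}_\lambda\otimes\mathcal{V}_w(\mu))$, and Frobenius reciprocity for induced homogeneous bundles turns the sought multiplicity into
\[ [M_w:V(\overline{\lambda+w\mu})]\ =\ \dim\Hom_{\lie{b}}\bigl(V(\overline{\lambda+w\mu})^{*},\ \C_{-\lambda}\otimes V_w(\mu)^{*}\bigr). \]
A $\lie{b}$-homomorphism out of $V(\overline{\lambda+w\mu})^{*}$ is determined by the image of its $\lie{b}$-generator, which must land in a prescribed weight space of $V_w(\mu)^{*}$, namely the one corresponding to the weight $\overline{\lambda+w\mu}-\lambda$ of $V_w(\mu)$. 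When $\lambda+w\mu$ is itself dominant this weight equals $w\mu$, the extremal weight defining the Demazure module, which occurs with multiplicity one, so the upper bound $[M_w:V(\overline{\lambda+w\mu})]\le 1$ is immediate; in general one must also impose that the assignment respects the $\lie{n}^+$-relations of $V(\overline{\lambda+w\mu})^{*}$, and combining this constraint with the vanishing of the higher cohomology of $\mathcal{L}_\lambda\otimes\mathcal{V}_w(\mu)$ forces the space to be one-dimensional. That this multiplicity is at least $1$ follows from the already-established cases together with Theorem \ref{Tprv2}.

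The hard part is exactly this last cohomological input: the Kempf-type vanishing for line bundles on Schubert varieties, and --- more delicately --- the vanishing of $H^{>0}(X,\mathcal{L}_\lambda\otimes\mathcal{V}_w(\mu))$ and the resulting fine control that pins the generalized PRV multiplicity to $1$ rather than merely to a positive number. This was precisely Kumar's contribution in \cite{Kum1}, resting on the (then recent) vanishing theorems for Schubert varieties. Alternative routes give more self-contained or more general proofs: Mathieu \cite{Mat1} and Kashiwara obtained the statement --- including the symmetrizable Kac--Moody case --- using Frobenius splitting of $G/B$ (cf.\ also \cite{Kum2}), and Littelmann's path model yields a purely combinatorial proof, exhibiting a single Lakshmibai--Seshadri path of shape $\mu$ whose concatenation with the path to $\lambda$ remains dominant and ends at $\overline{\lambda+w\mu}$, the uniqueness of that path being the multiplicity-one assertion.
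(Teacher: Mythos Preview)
The paper is an expository survey and does not give its own proof of Theorem~\ref{Tkprv}; it states the result with attribution to \cite{Kum1,Kum2,Mat1} and discusses its history and generalizations. Your outline does follow Kumar's geometric approach from \cite{Kum1} (as the paper itself notes in passing: ``Kumar's proofs in \cite{Kum1} involved a study of similar objects,'' namely line bundles on $G/B\times G/B$), so in that sense your proposal is aligned with the cited source rather than with anything in the paper proper.

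That said, there is a genuine gap in your sketch. You write that ``this multiplicity is at least $1$ follows from the already-established cases together with Theorem~\ref{Tprv2}.'' It does not. Theorem~\ref{Tprv2} handles only $w=w_\circ$, and the case $w=e$ is trivial; the positivity $[M_w:V(\overline{\lambda+w\mu})]\ge 1$ for arbitrary $w$ is precisely the PRV conjecture and is not a formal consequence of the two extreme cases. In Kumar's argument the existence and the multiplicity-one statement come out of the same cohomological computation: one shows that the relevant $\Hom$-space is \emph{exactly} one-dimensional by identifying it with a specific weight space in a Demazure module (or, more carefully, with the space of $\lie{b}$-maps subject to the $\lie{n}^+$-relations, which the vanishing theorems force to be one-dimensional). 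Your sketch separates the upper bound from the lower bound and then appeals to an illegitimate source for the lower bound; you should instead extract both from the geometry simultaneously. Note also that the multiplicity of $V(\overline{\lambda+w\mu})$ in the \emph{full} tensor product can exceed $1$ (the paper records Verma's counterexamples just after the theorem), so the multiplicity-one assertion is genuinely about $M_w$ and not about $V(\lambda)\otimes V(\mu)$; your Littelmann-path remark at the end blurs this distinction.
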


\noindent Note that a part of Theorem \ref{Tprv2} is just the special
case $w = w_\circ$ of this result. Moreover, the components
$V(\overline{\lambda + w \mu})$ are known as {\it generalized PRV
components}.

The KPRV conjecture was also extended to symmetrizable Kac-Moody Lie
algebras by Mathieu. Given a symmetrizable generalized Cartan matrix $A$,
one can again define the associated Kac-Moody Lie algebra $\lie{g}(A)$
over a field $k$. When $\chr k = 0$, one defines the above notions of
dominant integral weights $\Lambda^+$ and $\overline{\lambda}$, as well
as simple highest weight $\lie{g}(A)$-modules $L(\lambda)$ corresponding
to any weight $\lambda$.
In \cite{Mat1}, Mathieu defined an associated Kac-Moody group over an
arbitrary field $k$ (after earlier work by Kac, Moody, Peterson, and
Tits) using the formalism of ind-schemes. His work led him to prove
Theorem \ref{Tkprv} for $\lie{g}(A)$. (Kumar also proved this case under
the assumption that $\lambda$ is regular, in \cite{Kum2}.)

Other proofs of the (K)PRV conjecture have since appeared in the
literature (this is from \cite{Kum5}). For example, Polo had proved the
PRV conjecture in type $A$ in \cite{Po}. Rajeswari \cite{Ra} gave a proof
for classical $\lie{g}$ using Standard Monomial Theory; Littelmann did so
using his LS-path model (which generalizes the Littlewood-Richardson rule
using tableaux for $\lie{gl}(n)$, to symmetrizable Kac-Moody algebras -
see \cite{Li}); and Lusztig's work on the intersection homology of
generalized Schubert varieties associated to affine Kac-Moody groups also
provides a proof.

Here is another related result. Note that if $w' \leq w$ in the Bruhat
order, then $\lie{Ug}(v_\lambda \otimes v'_{w' \mu}) \subset
\lie{Ug}(v_\lambda \otimes v'_{w \mu})$. This follows inductively from
the case when $w = s_i w' > w'$, which is proved inside the module
$V_i((w'\mu)(h_i))$ over the ``$\alpha_i$-copy" of $\lie{sl}_2(\C)$, by
showing that $v_\lambda \otimes v'_{w' \mu} = e_i^{(w' \mu)(h_i)}
(v_\lambda \otimes v'_{w \mu})$. One can now ask if the component
$V(\overline{\lambda + w \mu})$ occurs in $\lie{Ug}(v_\lambda \otimes
v'_{w' \mu})$ for some $w' < w$, or if $\lie{Ug}(v_\lambda \otimes v'_{w
\mu})$ is the ``first time" that it occurs in $V(\lambda) \otimes
V(\mu)$.

\begin{prop}[\cite{Kum1}]
Given regular $\lambda, \mu \in \Lambda^+$ (i.e., $\lambda(h_i),
\mu(h_i)> 0 \ \forall i \in I$) and $w' < w$ in the Bruhat order on $W$,
the $\lie{g}$-module $V(\overline{\lambda + w \mu})$ does not occur in
$\lie{Ug}(v_\lambda \otimes v'_{w' \mu})$.
\end{prop}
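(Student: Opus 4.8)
The plan is to establish a ``graded refinement'' of the KPRV setup and then run a dimension/multiplicity count. The starting observation is that, since $w' < w$, Theorem~\ref{Tkprv} applies to \emph{both} $w'$ and $w$: the submodule $U := \lie{Ug}(v_\lambda \otimes v'_{w'\mu})$ contains exactly one copy of $V(\overline{\lambda + w'\mu})$, and the larger submodule $\lie{Ug}(v_\lambda \otimes v'_{w\mu})$ contains exactly one copy of $V(\overline{\lambda + w\mu})$. So the content of the proposition is that the component $V(\overline{\lambda + w\mu})$ already present in the larger module does \emph{not} survive into the smaller one $U$. First I would reduce, as the paper's own discussion suggests, to the elementary step $w = s_i w' > w'$: by the inductive inclusion $\lie{Ug}(v_\lambda \otimes v'_{w'\mu}) \subset \lie{Ug}(v_\lambda \otimes v'_{w\mu})$ coming from $v_\lambda \otimes v'_{w'\mu} = e_i^{(w'\mu)(h_i)}(v_\lambda \otimes v'_{w\mu})$, a general $w' < w$ factors through a chain of such covering relations, and if $V(\overline{\lambda + w\mu})$ fails to occur at each intermediate covering it fails to occur at $w'$.

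For the covering step itself, I would work inside the $\lie{sl}_2$-copy $\lie{g}_{(i)} = \langle e_i, f_i, h_i\rangle$ and exploit regularity. Set $n := (w'\mu)(h_i) \geq 0$; regularity of $\mu$ forces $n > 0$ unless $w'\mu$ is already $s_i$-fixed, but $w = s_iw' > w'$ together with regularity of $\mu$ rules that out, so $n \geq 1$. The vectors $v'_{w\mu}$ and $v'_{w'\mu}$ are the two extremal weight vectors of an $(n{+}1)$-dimensional $\lie{g}_{(i)}$-submodule of $V(\mu)$, with $v'_{w'\mu}$ on top and $v'_{w\mu} = (f_i^{n}/n!)\,v'_{w'\mu}$ at the bottom. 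Correspondingly, inside $V(\lambda) \otimes V(\mu)$ one restricts to $\lie{g}_{(i)}$ and studies the chain of cyclic modules $\lie{U}\lie{g}_{(i)}\,(v_\lambda \otimes v'_{w\mu})$; the key point is that $v_\lambda \otimes v'_{w\mu}$ is \emph{not} a lowest weight vector for $\lie{g}_{(i)}$ (its $\lie{g}_{(i)}$-string goes strictly upward), whereas $v_\lambda \otimes v'_{w'\mu}$, while not extremal either, sits strictly above it in $h_i$-weight by exactly $2n$. The second ingredient is a weight-string argument for the component $V(\overline{\lambda + w\mu})$: the vector generating its unique copy can be taken to be (a multiple of) $v_\lambda \otimes v'_{w\mu}$ projected to the appropriate isotypic piece, and its $h_i$-eigenvalue equals $(\lambda + w\mu)(h_i) = (\lambda + s_iw'\mu)(h_i) = \lambda(h_i) - w'\mu(h_i) = \lambda(h_i) - n$. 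I would then show that every weight $\gamma$ appearing in $U = \lie{Ug}(v_\lambda \otimes v'_{w'\mu})$ satisfies $\gamma \leq \lambda + w'\mu$ in the partial order on weights used throughout, and that $\overline{\lambda + w\mu}$ — being on the far side of the $\lie{g}_{(i)}$-string from $\lambda + w'\mu$ — cannot be dominated by $\lambda + w'\mu$; equivalently, $(\lambda + w'\mu) - \overline{\lambda + w\mu} \notin \Z_{\geq 0}\Pi$.

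The cleanest way to package the last point uses the convex-hull criterion invoked in the proof of Proposition~\ref{Pmax}: the result from \cite{KLV} says $\conv(W\nu) \subset \conv(W(\lambda + w'\mu))$ iff $(\lambda+w'\mu) - \nu \in \Z_{\geq 0}\Pi$. So I would show $\overline{\lambda + w\mu}$ lies outside $\conv(W(\lambda + w'\mu))$ — intuitively because passing from $w'$ to $s_iw'$ reflects $\mu$'s extremal vector across a wall and thereby shrinks the relevant convex region along the $\alpha_i$-direction — and conclude via \cite[Theorem~7.41]{Ha} that $\overline{\lambda+w\mu} \notin \wt V(\overline{\lambda + w'\mu})$. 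Since every weight of $U$ is a weight of its top component's ``saturation'' (here I would need the fact that $U$, being generated by $v_\lambda \otimes v'_{w'\mu}$, has all its weights $\leq \lambda + w'\mu$, and that $\wt V(\overline{\lambda+w'\mu}) = (\lambda+w'\mu - \Z\Pi)\cap\conv(W(\lambda+w'\mu))$ captures exactly the saturated set below $\lambda + w'\mu$), the highest weight $\overline{\lambda + w\mu}$ cannot occur in $U$, hence $V(\overline{\lambda+w\mu})$ does not occur as a summand. The main obstacle I anticipate is the second half of the previous paragraph: controlling \emph{all} weights of the cyclic submodule $U$, not just those of a single extremal string, and in particular verifying that $U$ contains no weight vector of weight $\overline{\lambda + w\mu}$ at all — a priori such a weight could appear inside some component of $U$ other than its top one. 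Regularity of $\lambda$ and $\mu$ should be exactly what is needed to force the strict separation, mirroring how regularity enters Kumar's original argument in \cite{Kum1}; making that separation quantitative (e.g.\ bounding $h_i$-weights on $U$ by $\lambda(h_i) + w'\mu(h_i)$ and on the offending component by $\lambda(h_i) - n$, and checking these are incompatible given $n \geq 1$) is the crux, and I would lean on the $\lie{sl}_2$-string structure and the Bruhat-order input $w' < w$ to get it.
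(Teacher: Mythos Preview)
First, a remark about the comparison task: the paper does \emph{not} supply its own proof of this proposition; it is stated with attribution to \cite{Kum1} and the argument is left to Kumar's original paper, which uses geometric methods (cohomology of line bundles on Schubert varieties) rather than elementary weight combinatorics.

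Your proposed approach, however, has a genuine and fatal gap. The core of your strategy is to show that the dominant weight $\overline{\lambda + w\mu}$ does not occur as a weight of $U := \lie{Ug}(v_\lambda \otimes v'_{w'\mu})$ at all, by arguing that every weight of $U$ lies below $\lambda + w'\mu$ (or inside $\conv(W(\lambda + w'\mu))$). But this is simply false. Since $1 \leq w'$ in the Bruhat order, the chain of inclusions quoted in the paper gives $V(\lambda + \mu) = \lie{Ug}(v_\lambda \otimes v'_{\mu}) \subset U$, so $U$ already contains the Cartan component. In particular $\lambda + \mu$ is a weight of $U$, and $\lambda + \mu \not\leq \lambda + w'\mu$ whenever $w' \neq 1$. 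Worse, by Proposition~\ref{Pmax} we have $\overline{\lambda + w\mu} \in \wt(V(\lambda)\otimes V(\mu)) = \wt V(\lambda+\mu)$, so $\overline{\lambda + w\mu}$ \emph{is} a weight of $U$. Thus the weight-exclusion argument cannot possibly succeed: the question is not whether the weight $\overline{\lambda + w\mu}$ appears in $U$ (it does), but whether a \emph{highest-weight} vector of that weight appears, which is a much finer question.

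Your convex-hull claim is also false in concrete cases: for $\lie{g} = \lie{sl}_3$, $\lambda = \mu = \rho$, $w' = 1$, $w = s_1$, one computes $\overline{\lambda + w\mu} = 3\varpi_2$ and $\lambda + w'\mu = 2\rho$, and $2\rho - 3\varpi_2 = \alpha_1 \in \Z_{\geq 0}\Pi$, so $\overline{\lambda + w\mu}$ lies \emph{inside} $\conv(W(\lambda + w'\mu))$. The obstacle you anticipated in your final paragraph is real and decisive; the proposition genuinely requires more than coarse weight bounds, and Kumar's proof proceeds by identifying $\lie{Ug}(v_\lambda \otimes v'_{w'\mu})$ with a Demazure-type module $H^0(X_{w'}, \mathcal{L})$ and using cohomology vanishing, for which regularity of $\lambda, \mu$ is essential.
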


We end this part by mentioning two further directions in which the
(original) PRV conjecture has been generalized very recently. Suppose $G
\subset \widehat{G}$ are complex connected reductive groups, such that
$\widehat{W}$ is the Weyl group of $\widehat{G}$ and various subgroups
are ``compatible" with the inclusion $: G \hookrightarrow \widehat{G}$
(e.g., $B \subset \widehat{B}$, $T \subset \widehat{T}$, $W \subset
\widehat{W}$).
Given a dominant integral weight $\widehat{\lambda}$ for $\widehat{G}$
and $\widehat{w} \in \widehat{W}$, does the simple highest-weight
(finite-dimensional) module $V_G(\overline{\rho(\widehat{w}
\widehat{\lambda})})$ with extremal weight $\rho(\widehat{w}
\widehat{\lambda})$ occur inside $V_{\widehat{G}}(\widehat{\lambda})$
(upon restricting this to $G$)? Here, $\rho$ is the restriction of a
weight from $\widehat{G}$ to $G$. For example, the classical PRV
conjecture uses $\widehat{G} = G \times G$ containing the diagonal copy
of $G$, and
\[ \qquad \widehat{W} = W \times W, \qquad \widehat{\lambda} =
(\lambda,\mu), \qquad \widehat{w} = (1, w), \qquad \rho(\lambda,\mu) =
\lambda + \mu. \]

\noindent The above question is addressed in great detail for more
general pairs $G \subset \widehat{G}$ in the recent papers
\cite{MPR1,MPR2}, under the assumption that $\widehat{G} / G$ is
``spherical of minimal rank".

Finally, Hayashi has proved a quantum counterpart of the PRV conjecture
in \cite{Hay} in the context of fusion rules for
$\widehat{\lie{sl}_3}(\C)$ and the moduli space of $SU(3)$-flat
connections on a pair of pants. These references are intended to
reinforce upon the reader that the PRV conjecture is an extremely
well-studied result, with connections to several other settings in
representation theory and beyond.

\subsection{Tensor product multiplicities, revisited}

We now return to the original question in this section, of computing
Littlewood-Richardson coefficients. As shown above, several results and
formulae have been proposed over the years. Additionally, various other
approaches have appeared more recently in the literature. To name but a
few: Littelmann's LS-path model, Lusztig's approach using canonical
bases, and Kashiwara's use of crystals. See \cite{BZ,Ka,Kum5,Li,Lu} for
references and results.

Recall that the basic questions involving tensor product multiplicities
are: (a) when are the $m^\nu_{\lambda, \mu}$ positive, and (b) computing
the $m^\nu_{\lambda,\mu}$. Results by Kostant, or the PRV conjecture,
address the first question, while the PRV theorem and results by Brauer
and Steinberg discuss the second one. As the above references and related
results show, the work \cite{PRV2} has had quite an influential
contribution in this regard.

We conclude this section with a few additional results in this direction
(see \cite{Kum5}) that exhibit new components, or in some cases, even
obtain a complete decomposition of the tensor product. The first is a
refinement of the above PRV conjecture. One can ask if
$m^{\overline{\lambda + w \mu}}_{\lambda, \mu} = 1$ for all $w$, since it
is so for $w = 1, w_\circ$ from above. This claim turns out to be false -
in fact, Verma produced counterexamples for every $\lie{g}$ of rank 2
(i.e., $|\Pi| = 2$), by choosing $\lambda = \mu = \rho = \sum_{i \in I}
\varpi_i$.

This led Verma to refine the PRV conjecture as follows. The refined
statement was also proved by Kumar.

\begin{theorem}[\cite{Kum3}]
Given $\lambda, \mu \in \Lambda^+$ and $w \in W$, define $W_\lambda$ to
be the stabilizer subgroup of $\lambda$ in $W$, and the map $\eta :
W_\lambda \backslash W / W_\mu \to \Lambda^+$ via: $\eta(W_\lambda w
W_\mu) = \overline{\lambda + w \mu}$.
Then $\displaystyle m^{\overline{\lambda + w \mu}}_{\lambda,\mu} \geq \#
\eta^{-1}( \eta(W_\lambda w W_\mu))$.
\end{theorem}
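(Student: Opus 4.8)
The plan is to deduce the inequality from the KPRV theorem (Theorem~\ref{Tkprv}) by exhibiting, for each double coset $W_\lambda w' W_\mu$ with $\eta(W_\lambda w' W_\mu) = \eta(W_\lambda w W_\mu) =: \overline{\lambda + w\mu}$, a copy of $V(\overline{\lambda + w\mu})$ inside $V(\lambda) \otimes V(\mu)$, and then to argue that these copies are ``independent'' enough to force the multiplicity to be at least the number of such cosets. First I would observe that the extremal weight vectors $v_{w'\lambda}$ and $v'_{w'\mu}$ depend, up to scalar, only on the cosets $W_\lambda w'$ and $w' W_\mu$ respectively (since $W_\lambda$ stabilizes $\lambda$ and hence acts on the one-dimensional extremal space $V(\lambda)_{w'\lambda}$, and similarly for $\mu$). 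Hence the weight vector $v_{w'\lambda} \otimes v'_{w'\mu} \in V(\lambda) \otimes V(\mu)$, of weight $w'\lambda + w'\mu = w'(\lambda + \mu)$, is (up to scalar) an invariant of the double coset $W_\lambda w' W_\mu$ — so it makes sense to index these vectors by the fibre $\eta^{-1}(\overline{\lambda + w\mu})$.

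Next I would bring in Theorem~\ref{Tkprv}: for each such $w'$, the submodule $\lie{Ug}(v_{w'\lambda} \otimes v'_{w'\mu})$ — after possibly replacing $w'$ within its coset, or more cleanly after composing with the twist that relates $v_\lambda \otimes v'_{w'\mu}$ to $v_{w'\lambda} \otimes v'_{w'\mu}$, noting $v_\lambda \otimes v'_{w'\mu}$ already has the same $\lie{g}$-submodule structure relevant here — contains exactly one copy of $V(\overline{\lambda + w'\mu}) = V(\overline{\lambda + w\mu})$. So each coset in the fibre contributes at least one copy of the target module; the content of the proposition is that the copies arising from distinct cosets are linearly independent inside $V(\lambda) \otimes V(\mu)$, so that the total multiplicity $m^{\overline{\lambda + w\mu}}_{\lambda,\mu}$ is at least $\#\eta^{-1}(\eta(W_\lambda w W_\mu))$.

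The main obstacle, as expected, is precisely this independence claim. The natural way to handle it is to project onto a single weight space: fix a weight, say the highest weight $\overline{\lambda + w\mu}$ itself, or better a suitable extremal weight of $V(\overline{\lambda+w\mu})$ where the ambient multiplicity is controlled, and track where the highest-weight vector of each copy lands. One expects the $\#\eta^{-1}(\cdots)$ highest-weight vectors (one per coset) to have ``leading terms'' supported on distinct extremal weight vectors $v_{w'\lambda}\otimes v'_{w'\mu}$ of $V(\lambda)\otimes V(\mu)$ — these live in distinct weight spaces $w'(\lambda+\mu)$ when the $w'(\lambda+\mu)$ are distinct, which holds exactly because the $w'$ range over distinct $W_\lambda\backslash W/W_\mu$-cosets, i.e. $w'(\lambda+\mu) = w''(\lambda+\mu)$ would force $w''^{-1}w' \in W_\lambda \cap$ (conjugate of $W_\mu$)\,--\,type condition pinning the coset. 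A clean triangularity argument with respect to a linear order refining the partial order on weights then separates the copies. I would carry this out by: (i) showing the lowest-weight component (in an appropriate sense) of the $w'$-th generator is a nonzero multiple of $v_{w'\lambda}\otimes v'_{w'\mu}$; (ii) observing these lie in pairwise distinct weight spaces across the fibre; (iii) concluding linear independence of the corresponding submodules, hence the asserted lower bound on $m^{\overline{\lambda+w\mu}}_{\lambda,\mu}$.
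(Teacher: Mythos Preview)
The paper does not actually prove this theorem: it is a survey article, and the result is simply quoted from \cite{Kum3} with no argument given. So there is nothing in the paper to compare your proposal against.

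That said, your proposal has a genuine gap at the independence step. You oscillate between the generators $v_\lambda \otimes v'_{w'\mu}$ (which is what Theorem~\ref{Tkprv} is about) and $v_{w'\lambda} \otimes v'_{w'\mu}$, and these behave very differently: the cyclic module $\lie{Ug}(v_{w'\lambda} \otimes v'_{w'\mu})$ is, after twisting by $w'$, just the Cartan component $V(\lambda+\mu)$ --- the same submodule for every $w'$ --- so nothing is gained by indexing over double cosets there. The relevant vectors are $v_\lambda \otimes v'_{w'\mu}$, of weight $\lambda + w'\mu$.

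More seriously, your ``distinct weight spaces'' argument cannot work as written. The highest weight vector of each copy of $V(\overline{\lambda + w\mu})$ lives in the \emph{single} weight space $(V(\lambda)\otimes V(\mu))_{\overline{\lambda + w\mu}}$; these do not sit at the pairwise distinct weights $w'(\lambda+\mu)$ or $\lambda + w'\mu$. The generator $v_\lambda \otimes v'_{w'\mu}$ does sit at weight $\lambda + w'\mu$, but it is not the highest weight vector of the component, nor even a weight vector of $V(\overline{\lambda+w\mu})$ in general (the weight $\lambda + w'\mu$ is typically \emph{extremal} for that component, but you would need to argue that the projection of the generator onto the component is nonzero and then run the triangularity there --- and the ``multiplicity one in the cyclic submodule'' statement of Theorem~\ref{Tkprv} does not by itself control how the component sits inside the ambient tensor product relative to the other cyclic submodules). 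Kumar's actual proof in \cite{Kum3} uses geometric input (line bundles on $G/B \times G/B$ and Frobenius splitting / cohomology vanishing) rather than a purely combinatorial separation of this kind; a clean algebraic independence argument along your lines, if it exists, would need substantially more care than what you have sketched.
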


\noindent Of course, if $\lambda, \mu$ are both regular (i.e., $(\lambda,
\alpha)$ and $(\mu, \alpha)$ are both nonzero for all roots $\alpha$),
then $W_\lambda = W_\mu = \{ 1 \}$.

Second, a related result from \cite{Kum3} is able to determine {\it all}
the multiplicities when $\wt V(\mu) = W \mu$ is a single orbit (i.e.,
$\mu$ is minuscule). In this case,
\[ V(\lambda) \otimes V(\mu) \cong \bigoplus_{\overline{w} \in W / W_\mu
: \lambda + w \mu \in \Lambda^+} V(\lambda + w \mu), \]

\noindent where each factor occurs with multiplicity $1$. There are
exactly $\# W_\lambda \backslash W / W_\mu$ components.

Kumar also shows the following result in \cite{Kum4}: suppose $\beta \in
R^+$ is such that $\lambda + \mu - \beta \in \Lambda^+$, and such that
$\beta - \alpha_i \notin R^+ \cup \{ 0 \}$ whenever $\lambda(h_i)$ or
$\mu(h_i) = 0$. Then $m^{\lambda + \mu - \beta}_{\lambda, \mu} > 0$.
A similar result can be found in the recent work \cite{MPR1}, where the
authors demonstrate new components of the form $w_1 \lambda + w_2 \mu - k
\alpha_i$ for some $w_1, w_2 \in W$ and $i \in I$.

Finally, Dimitrov and Roth have worked with restrictions of line bundles
from the square $G/B \times G/B$ of the flag variety to the diagonally
embedded copy. (Here, $G$ is a connected reductive algebraic group with
$\Lie(G) = \lie{g}$, and, $B \subset G$ is a Borel subgroup with $\Lie(B)
= \lie{h} \oplus \lie{n}^+$; Kumar's proofs in \cite{Kum1} involved a
study of similar objects.) They study special components $V(\nu)$ of the
tensor product modules $V(\lambda) \otimes V(\mu)$, that arise out of
cohomological reasons. The authors mention in \cite{DR} that these {\it
cohomological components} automatically turn out to be generalized PRV
components satisfying: $m^{k \nu}_{k \lambda, k \mu} = 1$ for all $k \in
\N$. They go on to prove the converse implication when $G$ is a classical
group, as well as in other cases.

\section{Irreducible Banach space representations}

We now continue the discussion prior to the preceding section, about
constructing irreducible $\overline{\lie{g}}$-admissible Banach
(actually, Hilbert) space representations of $G$. In \cite{PRV2}, having
proved that finite-dimensional $\lie{g} \times \lie{g}$-modules have
minimal types, the authors proceed to construct other such
representations (possibly infinite-dimensional), in a completely
different manner. Their construction depends heavily on the work of
Harish-Chandra \cite{Har1}--\cite{Har4}. These representations
$\pihat_{\lambda, \nu}$ are defined on subquotients of a Hilbert space.

\subsection{The work of Harish-Chandra}

In order to outline the construction of these $\lie{g}$-modules by
Harish-Chandra and by Parthasarathy et al, additional notation is needed.
Let $K \subset G$ be the maximal compact subgroup of a complex connected
semisimple Lie group, and let $\lie{h}_0 \subset \Lie(K)$ be a Cartan
subalgebra. Now define $M := \exp( \sqrt{-1} \cdot \lie{h}_0) \subset K$
to be the corresponding Cartan subgroup. For each $\nu \in \Lambda$,
define $\sigma_\nu$ to be the unique character of $M$ that sends
$\exp(\sqrt{-1} \cdot h)$ to $\exp(\sqrt{-1} \cdot \nu(h))$ for all $h
\in \lie{h}_0$. Then $\nu \mapsto \sigma_\nu$ is an isomorphism of
$(\Lambda, +)$ onto the character group $\widehat{M}$ of $M$.

Now let $\lie{H} := L^2(K, \C, \mu)$, where $\mu$ denotes the
(normalized) Haar measure on the compact group $K$. This is a
representation of $M$ under the right-regular action: $(m \cdot f)(k) :=
f(k m^{-1})$. Given $\nu \in \Lambda$, define the $\nu$-weight subspace
of $\lie{H}$ as follows:
\[ \lie{H}(\nu) := \{ f \in \lie{H} : m \cdot f = \sigma_{-\nu}(m)f\
\forall m \in M \}. \]

\noindent Then $\lie{H}$ decomposes as the direct sum of the
$\lie{H}(\nu)$ over all $\nu \in \Lambda$. Moreover, given $\xi \in
\lie{h}^*$, Harish-Chandra had previously defined and studied a
$G$-module structure $\pi_\xi$ on $\lie{H}$ in \cite{Har2}--\cite{Har4}.
It turns out that every $\lie{H}(\nu)$ is a submodule of $\lie{H}$ under
this structure; define $\pi_{\xi,\nu}$ to be this representation. Here
are some of the properties of these modules that are used in \cite{PRV2}.

\begin{theorem}[Harish-Chandra]\label{Thar}
Fix $\xi \in \lie{h}^*$ and $\nu \in \Lambda$.
\begin{enumerate}
\item For all $\mu \in \Lambda^+$, $[\pi_{\xi,\nu} : \vg(\mu)] = \dim
\vg(\mu)_\nu$. In particular, $[\pi_{\xi,\nu} : \vg(\overline{\nu})] =
1$.

\item The representation $\pi_{\xi,\nu}$ has an infinitesimal character
(of $Z(\lie{U}(\lie{g} \times \lie{g}))$).

\item $\pi_{\xi,\nu}$ possesses a distributional character
$\Theta_{\xi,\nu}$, which is a locally summable function that is analytic
on the dense open subset of regular points of $G$. Moreover,
$\Theta_{\xi,\nu} = \Theta_{\xi',\nu'}$ if and only if there exists $w
\in W$ such that $\xi' = w \xi, \nu' = w \nu$.

\item If $\xi$ is restricted to lie in the real subspace $\lie{R}$ of all
weights that take purely imaginary values on $\lie{h}_0$, then
$\pi_{\xi,\nu}$ is always unitary, and almost always irreducible, say
whenever $\xi \in \lie{R}_\nu \subset \lie{R}$ (for each $\nu$). In
particular, if $\xi \in \lie{R}_\nu$ and $w \in W$, then $\pi_{\xi,\nu}
\cong \pi_{w \xi, w \nu}$.
\end{enumerate}
\end{theorem}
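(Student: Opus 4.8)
The plan is to establish the four assertions one at a time, in each case realizing $\pi_\xi$ as the principal series representation $\operatorname{Ind}_{MAN}^G(\sigma_\nu\otimes e^\xi\otimes 1)$ --- with $MAN$ the minimal parabolic of the complex group $G$, so that $M$ is the compact torus with Lie algebra $\lie{h}_0$ and $A=\exp(\sqrt{-1}\,\lie{h}_0)$ --- and then quoting the appropriate piece of Harish-Chandra's theory from \cite{Har1}--\cite{Har4}. Since these are Harish-Chandra's theorems, what I would actually write is a guided sketch identifying the relevant inputs, rather than a self-contained proof.

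For \emph{(1)} I would first note that $\pi_\xi$ restricts on $K$ to the left regular representation on $L^2(K,\C,\mu)$, \emph{independently of $\xi$} (in the compact realization the Iwasawa cocycle is trivial on $K$), so that the $\overline{\lie{g}}$-isotypic decomposition of $\pi_{\xi,\nu}$ is a Peter--Weyl computation. Since $\lie{H}(\nu)$ is, as a $K$-module, induced from the character $\sigma_{-\nu}$ of $M$, Frobenius reciprocity gives $[\pi_{\xi,\nu}:\vg(\mu)]=\dim\operatorname{Hom}_M(\sigma_\nu,\vg(\mu)|_M)=\dim\vg(\mu)_\nu$; and since $\nu$ is an extremal (that is, $W$-conjugate to the highest) weight of $\vg(\overline{\nu})$, the space $\vg(\overline{\nu})_\nu$ is one-dimensional, which is the ``in particular''. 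For \emph{(2)} I would invoke that parabolic induction carries the infinitesimal character of the inducing Levi to one of $G$ via the parabolic Harish-Chandra homomorphism; in the complex case $\lie{g}^\C=\lie{g}\times\lie{g}$ this says precisely that each tensor factor $Z(\lie{Ug})$ of $Z(\lie{U}(\lie{g}\times\lie{g}))$ acts by a scalar, of the form $\chi(\tfrac12(\xi\pm\nu))$ in the notation of Theorem~\ref{Thc} (up to normalization and the $\rho$-shift).

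The remaining two parts carry the real weight. For \emph{(3)}, the existence of the distributional character $\Theta_{\xi,\nu}$, its representability by a locally summable function, and its analyticity on the regular set is Harish-Chandra's explicit character formula for induced representations together with his regularity theorem \cite{Har2}--\cite{Har4}; the equivalence criterion $\Theta_{\xi,\nu}=\Theta_{\xi',\nu'}\Leftrightarrow(\xi',\nu')\in W\cdot(\xi,\nu)$ follows because two such principal series of a complex group are infinitesimally equivalent exactly when their inducing data are $W$-conjugate --- one direction supplied by the standard (Bruhat) intertwining operators, the other by comparing the infinitesimal characters from (2) with the $K$-type data from (1). For \emph{(4)}, unitarity of $\pi_{\xi,\nu}$ for $\xi\in\lie{R}$ is immediate because then $\sigma_\nu\otimes e^\xi$ is a \emph{unitary} character of $MAN$, so $\pi_\xi$ is unitarily induced; irreducibility off a thin set $\lie{R}_\nu\subset\lie{R}$ is Bruhat's irreducibility theorem for complex groups (the inducing character has trivial $W$-stabilizer outside a lower-dimensional subset); and on $\lie{R}_\nu$ the normalized standard intertwining operator $\pi_\xi\to\pi_{w\xi}$ is an isomorphism carrying $\lie{H}(\nu)$ onto $\lie{H}(w\nu)$, which yields $\pi_{\xi,\nu}\cong\pi_{w\xi,w\nu}$.

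The main obstacle --- and the reason I would cite rather than reprove --- is \emph{(3)}: Harish-Chandra's theorem that the character is an $L^1_{\mathrm{loc}}$ function analytic on the regular set is one of the deep results of the subject, and even the explicit integral formula for the character of $\operatorname{Ind}_{MAN}^G$ and the estimates identifying the resulting distribution with a locally summable function are substantial. The ``almost always irreducible'' clause of \emph{(4)} is the other genuinely nontrivial input, resting on Bruhat's analysis of intertwining operators between principal series. I would not attempt either here, since the role of this section is only to record the properties of $\pi_{\xi,\nu}$ that \cite{PRV2} takes as given.
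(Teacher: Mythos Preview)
The paper does not prove this theorem at all: it is stated, with attribution to Harish-Chandra and references to \cite{Har1}--\cite{Har4}, purely as input to the discussion of \cite{PRV2}, and no argument is supplied. Your sketch is therefore not in tension with the paper's approach --- there is nothing to compare against --- and in fact goes well beyond what the paper offers by identifying the specific mechanisms (Frobenius reciprocity for the $K$-type multiplicities, the parabolic Harish-Chandra map for the infinitesimal character, the regularity theorem and Bruhat's intertwining operators for (3) and (4)). Your outline is accurate and appropriately candid about which inputs are deep; for an expository article that only \emph{uses} Theorem~\ref{Thar}, this is more than sufficient.
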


Although it is not specifically mentioned in \cite{PRV2}, it is actually
possible to compute the central character of $\pi_{\xi,\nu}$ - and this
has a very familiar expression. See Section \ref{Schar}.

\subsection{Constructing the representations $\pihat_{\lambda,\nu}$}

For his subquotient theorem, Harish-Chandra identified two closed
subspaces $\lie{H}''_\xi(\nu) \subset \lie{H}'_\xi(\nu) \subset
\pi_{\xi,\nu}$, such that the quotient of the larger of them by the
smaller one is an irreducible $G$-module. Obtaining a greater
understanding of these subquotients of $\pi_{\xi,\nu}$ was one of the
main motivations behind \cite{PRV2}; when $G$ is a complex semisimple
group, the authors are indeed able to describe these subspaces more
easily than Harish-Chandra in the real case. We start with this
description. The remainder of this entire section is based on
\cite[Section 2.4]{PRV2}.

Recall from Theorem \ref{Thar} that $[\pi_{\xi,\nu} :
\vg(\overline{\nu})] = 1$. Thus, define $\lie{H}'_\xi(\nu)$ to be the
smallest closed $G$-submodule of $\pi_{\xi,\nu}$ containing the unique
copy of $\vg(\overline{\nu})$. Inside this, define $\lie{H}''_\xi(\nu)$
to be the sum of all closed $G$-submodules $M \subset \lie{H}'_\xi(\nu)$
such that $M \cap \vg(\overline{\nu}) = 0$. Then $\lie{H}''_\xi(\nu)$ is
a maximal submodule of $\lie{H}'_\xi(\nu)$, and this leads to the
irreducible $G$-representations $\pihat_{\lambda,\nu} :=
\lie{H}'_\xi(\nu) / \lie{H}''_\xi(\nu)$, where $\lambda :=
\frac{1}{2}(\xi + \nu) - \rho$ runs over all of $\lie{h}^*$ as well.
Clearly, $[\pihat_{\lambda, \nu} : \vg(\mu)] \leq \dim \vg(\mu)_\nu\
\forall \mu \in \Lambda^+$.

The space $\pihat_{\lambda,\nu}$ was shown in \cite{PRV2} to have the
following properties:
\begin{itemize}
\item $\pihat_{\lambda,\nu}$ is an irreducible subquotient of
$\pi_{\xi,\nu} \subset \lie{H} = L^2(K,\C,\mu)$, hence it too is defined
on a Hilbert space. Moreover, $\pi_{\xi,\nu}$ is irreducible if and only
if $\pihat_{\lambda,\nu} \cong \pi_{\xi,\nu}$, if and only if
$[\pihat_{\lambda,\nu} : \vg(\mu)] = \dim \vg(\mu)_\nu$ for all $\mu \in
\Lambda^+$.

\item $\pihat_{\lambda,\nu}$ is an object of $\scrc(\lie{g} \times
\lie{g}, \overline{\lie{g}})$, with minimal type component
$\overline{\nu} \in \Lambda^+ \cap W \nu$. Moreover, $[\pihat_{\lambda,
\nu} : \vg(\overline{\nu})] = 1$.

\item $\pihat_{\lambda,\nu}$ has the same infinitesimal character as
$\pi_{\xi,\nu}$, where $\lambda = \frac{1}{2}(\xi + \nu) - \rho$.
\end{itemize}

\noindent Note that if $\nu' \notin W \nu$, then $\pihat_{\lambda',
\nu'}$ and $\pihat_{\lambda, \nu}$ cannot be isomorphic by Equation
\eqref{Ehar}, because their minimal types are $\overline{\nu'} \neq
\overline{\nu}$ respectively.

From above, the modules $\pihat_{\lambda,\nu}$ admit infinitesimal
characters. It is clear that the highest weight modules $V(\lambda,\mu)$
also admit such characters. Moreover, both of these are families of
simple objects in $\scrc(\lie{g} \times \lie{g}, \overline{\lie{g}})$.
Therefore it is natural to ask if $\pihat_{\lambda,\nu}$ is
finite-dimensional for some values of the parameters - and if all
finite-dimensional simple modules $V(\lambda,\mu)$ are thus covered.

To answer these questions (affirmatively!), Parthasarathy et al studied
the ``key homomorphisms" $\eta_{V,\cald} : \Omega \to \C$ in greater
detail, by relating them to certain homomorphisms ${\bf h}^\Pi : \Omega
\to P(\lie{h}^* \times \lie{h}^*)$. These homomorphisms are the subject
of the next subsection.

\subsection{Constructing the polynomial-valued maps ${\bf h}^{\Pi'}$}

Recall the ``key homomorphism" $\eta_{V,\cald} : \Omega \to \C$, that is
defined whenever a simple $\overline{\lie{g}}$-module $\cald$ arises with
multiplicity one in a simple object $V$ of $\scrc(\lie{g} \times \lie{g},
\overline{\lie{g}})$.
It turns out that there is an explicit construction of the map
$\eta_{\pihat_{\lambda,\nu}, \overline{\nu}}$ via a different
homomorphism ${\bf h}^{\Pi'}(-; \lambda, \nu)$, which we now present. We
explicitly compute both of these maps below in the example of $\lie{g} =
\lie{sl}_2(\C)$, to show that they are equal. This material discusses
\cite[Sections 2.3, 2.4]{PRV2}.

To construct the map $\h^{\Pi'}$, some more notation is needed. Given $X
\in \lie{g}$, define
\[ \one{X} := X \otimes 1, \qquad \two{X} := 1 \otimes X, \qquad
\overline{X} := \one{X} + \two{X}, \]

\noindent and similarly, $\one{\lie{g}}, \two{\lie{g}} \subset \ghat =
\lie{g} \oplus \lie{g}$, as well as $\one{\lie{h}}$ and so on. Then $X
\mapsto \overline{X}$ extends to an isomorphism of associative algebras
$: \lie{Ug} \to \lie{U} \overline{\lie{g}}$, and similar statements hold
for $\two{\lie{g}}, \one{\lie{h}}$, etc.

Now define $\qhat := \one{\lie{n}}^+ \oplus \two{\lie{n}}^-$. Note that
this is the ``positive part" of the triangular decomposition of $\lie{g}
\times \lie{g}$, if we define $\widehat{\Pi} := \one{\Pi} \coprod
-\two{\Pi}$. This choice of simple roots for $\ghat$ comes from the
consideration of the conjugation on $\lie{g}$ with respect to a compact
form; see a previous footnote.
Now $\ghat = \overline{\lie{g}} \oplus \one{\lie{h}} \oplus \qhat$, so by
the PBW Theorem,
\[ \lie{U} \ghat \cong (\lie{U} \overline{\lie{g}} \otimes \lie{U}
\one{\lie{h}}) \oplus (\lie{U} \ghat) \qhat \]

\noindent as $\C$-vector spaces. Note that every $H \in \Sym \lie{h}$ is
a polynomial on $\lie{h}^*$ as follows: write $H = p(\{ h_i : i \in I
\})$ for some polynomial $p$. Then $H(\lambda) = p(\{ \lambda(h_i) : i
\in I \})$. This also applies to $H \in \Sym \one{\lie{h}}$ or $\Sym
\overline{\lie{h}}$, for instance, via the obvious isomorphisms mentioned
above.
Similarly, define $\h^{\n} := \prod_{i \in I} h_i^{n_i}$ for $\n =
(n_i)_{i \in I} \in \Z_{\geq 0}^I$ (which we write as: $\n \geq {\bf
0}$), and also $\overline{\h}^{\n}, \one{\h}^{\n}$, and $\lambda(\h)^{\n}
= \lambda(\h^{\n}) := \h^{\n}(\lambda)$ from above.

We can now define the homomorphisms in question. Suppose $\omega \in
\Omega$, the centralizer of $\overline{\lie{g}}$ in $\lie{U \ghat}$. Then
there exists a unique $\xi_{\n} \in \lie{Ug}$ for all $\n \geq {\bf 0}$,
such that
\[ \omega \equiv \sum_{\n \geq {\bf 0}} \overline{\xi_{\n}} \otimes
\one{\h}^{\n} \mod (\lie{U} \ghat) \qhat. \]

\noindent Since $[\overline{h}, \omega] = 0$ for all $h \in \lie{h}$, one
checks that $\xi_{\n} \in (\lie{Ug})_0$ for all $\n$. Finally, given any
subset $\Pi' \subset R$ of simple roots for some Borel subalgebra
(equivalently, $\Pi' = w \Pi$ for some $w \in W$), the maps $\h^{\Pi'}$
are defined as follows:
\[ \h^{\Pi'}(\omega) := \sum_{\n \geq {\bf 0}} \beta^{\Pi'}(\xi_{\n})
\otimes \h^{\n} \in \Sym( \lie{h} \times \lie{h}), \qquad
\h^{\Pi'}(\omega; \lambda, \nu) := \sum_{\n \geq {\bf 0}}
\nu(\beta^{\Pi'}(\xi_{\n})) \lambda(\h^{\n})\ \forall \lambda, \nu \in
\lie{h}^*. \]

\noindent It turns out that these polynomials are very familiar
expressions, when $\omega$ is restricted to lie in the center
$Z(\lie{U}(\lie{g} \times \lie{g}))$. We see this in Section \ref{Schar}
below.

\subsection{Relationship between $\pihat_{\lambda,\nu}$ and $\h^{\Pi'}(-;
\lambda,\nu)$}

Recall that there are two classes of irreducible admissible
representations that are constructed in \cite{PRV2}: the
finite-dimensional modules $V(\lambda,\mu)$ for $\lambda, \mu \in
\Lambda^+$, and the Hilbert space representations $\pihat_{\lambda,\nu}$
for $\lambda \in \lie{h}^*$ and $\nu \in \Lambda$. In the former case, we
define $\nu := \lambda + w_\circ \mu \in \Lambda$ (as in Theorem
\ref{Tprv2}); then in both families, the representations all contain the
minimal type $\vg(\overline{\nu})$ with multiplicity $1$.

Now how does one show that the first of the above families is actually
contained inside the second? Similarly, how does one check if two given
representations $\pihat_{\lambda, \nu}$ and $\pihat_{\lambda', \nu'}$ are
equivalent or not? The answer in both cases is to use the homomorphisms
$\h^{\Pi'}$, together with Equation \eqref{Ehar}. More precisely, one
relates the maps $\h^{\Pi'}$ to the homomorphisms $\eta_{\pihat_{\lambda,
\nu}, \overline{\nu}}$. (Note that this does not completely answer the
second question.)

Here are some results from the heart of \cite{PRV2}, in which the authors
begin to address these questions. The proofs use Theorem \ref{Thar}.

\begin{theorem}[\cite{PRV2}]\label{Tprv3}
Suppose $\lambda \in \lie{h}^*$ and $w(\nu) \in \Lambda^+$ for some $\nu
\in \Lambda, w \in W$.
\begin{enumerate}
\item Then $\eta_{\pihat_{\lambda, \nu}, \overline{\nu}}(-) \equiv
\h^{w^{-1} \Pi}(-; \lambda, \nu)$ are homomorphisms $: \Omega \to \C$.

\item The maps $\h^{w \Pi}$ are homomorphisms $: \Omega \to P(\lie{h}^*
\times \lie{h}^*)$ for all $w \in W$. They are $W$-equivariant in the
following sense: for all $\omega \in \Omega$, $w,w' \in W$, and
$\lambda,\nu \in \lie{h}^*$,
\[ \h^{w' w\Pi}(\omega; w' * \lambda, w' \nu) = \h^{w\Pi}(\omega;
\lambda, \nu). \]
\end{enumerate}
\end{theorem}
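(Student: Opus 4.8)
The plan is to prove the two parts more or less independently, since part (1) is a computation inside $\lie{U}\ghat$ matched against the definition of the key homomorphism, while part (2) is a formal equivariance statement about the maps $\h^{\Pi'}$ that follows from the transformation properties of the Harish-Chandra projections $\beta^{\Pi'}$ under $W$.

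For part (1), I would start by unwinding what $\eta_{\pihat_{\lambda,\nu},\overline{\nu}}$ means: since $[\pihat_{\lambda,\nu}:\vg(\overline{\nu})]=1$, the element $\omega\in\Omega$ acts on the one-dimensional highest-weight space $(\pihat_{\lambda,\nu})_{\overline{\nu}}$ of the minimal type by a scalar, and that scalar is $\eta_{\pihat_{\lambda,\nu},\overline{\nu}}(\omega)$. The highest weight vector of $\vg(\overline{\nu})$ inside $\lie{H}'_\xi(\nu)\subset\pi_{\xi,\nu}$ is (by Theorem \ref{Thar}(1) and Harish-Chandra's construction) killed by $\two{\lie{n}}^- $ acting appropriately and is an $\one{\lie{h}}$-eigenvector of the correct weight; more precisely, with the choice $\widehat{\Pi}=\one{\Pi}\coprod -\two{\Pi}$ it is a $\qhat$-highest weight vector. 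So I would write $\omega\equiv\sum_{\n\geq{\bf 0}}\overline{\xi_{\n}}\otimes\one{\h}^{\n}\bmod(\lie{U}\ghat)\qhat$ as in the definition of $\h^{\Pi'}$, apply $\omega$ to that highest weight vector $v$, observe that the $(\lie{U}\ghat)\qhat$ part annihilates $v$ (this is the crux: $v$ is a highest weight vector for the triangular decomposition $\ghat=\overline{\lie{g}}\oplus\one{\lie{h}}\oplus\qhat$, so $\qhat\cdot v=0$), and note that $\one{\h}^{\n}$ acts by $\nu(\h^{\n})=\lambda(\h^{\n})$... — here I need to be careful about which parameter, $\lambda$ or $\nu$, is read off from the $\one{\lie{h}}$-weight versus the $\overline{\lie{h}}$-weight of $v$; the bookkeeping is exactly the relation $\lambda=\tfrac12(\xi+\nu)-\rho$ combined with the fact that the $\overline{\lie{g}}$-highest weight of $v$ is $\overline{\nu}$. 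Finally $\overline{\xi_{\n}}\in(\lie{U}\overline{\lie{g}})_0$ acts on the $\overline{\lie{g}}$-highest weight space $\vg(\overline{\nu})_{\overline{\nu}}$ through the Harish-Chandra projection, and since $v$ has extremal weight $w(\nu)\in\Lambda^+$ relative to the \emph{standard} Borel but is the highest weight of $\vg(\overline{\nu})$ relative to a $w$-twisted Borel, the relevant projection is $\beta^{w^{-1}\Pi}$, giving $\beta^{w^{-1}\Pi}(\xi_{\n})$ evaluated at $\nu$. Collecting these gives $\sum_\n \nu(\beta^{w^{-1}\Pi}(\xi_{\n}))\lambda(\h^{\n})=\h^{w^{-1}\Pi}(\omega;\lambda,\nu)$, as claimed. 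That the result is a ring homomorphism $\Omega\to\C$ then follows because $\omega\mapsto(\text{action on the line }(\pihat_{\lambda,\nu})_{\overline{\nu}})$ is visibly multiplicative.

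For part (2), I would argue as follows. First, $\h^{w\Pi}$ is a homomorphism $\Omega\to P(\lie{h}^*\times\lie{h}^*)$ because $\beta^{w\Pi}:(\lie{Ug})_0\to\Sym\lie{h}$ is a ring homomorphism (Theorem \ref{Thc}) and the assignment $\omega\mapsto\sum_\n\overline{\xi_{\n}}\otimes\one{\h}^{\n}$ is the $\C$-linear section of the PBW decomposition $\lie{U}\ghat\cong(\lie{U}\overline{\lie{g}}\otimes\lie{U}\one{\lie{h}})\oplus(\lie{U}\ghat)\qhat$, which is multiplicative modulo $(\lie{U}\ghat)\qhat$ since $\omega$ centralizes $\overline{\lie{g}}$ and hence the left ideal $(\lie{U}\ghat)\qhat$ behaves compatibly — the standard argument that the symbol map on a triangular decomposition restricted to the centralizer of the middle-plus-diagonal part is an algebra map. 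For the equivariance identity, the key input is the behaviour of the Harish-Chandra homomorphism under change of positive system: one has $\beta^{w'w\Pi}(\xi)=w'*\bigl(\beta^{w\Pi}(\xi)\bigr)$ for $\xi\in(\lie{Ug})_0$ (this is the standard independence-of-Borel statement packaged via the twisted $W$-action, implicit in the discussion preceding Theorem \ref{Thc}). Then evaluating at $(w'*\lambda, w'\nu)$:
\[
\h^{w'w\Pi}(\omega;w'*\lambda,w'\nu)=\sum_{\n\geq{\bf 0}}(w'\nu)\bigl(w'*\beta^{w\Pi}(\xi_{\n})\bigr)\,(w'*\lambda)(\h^{\n}).
\]
Now $(w'\nu)(w'*p)=(w'\nu)(w'(p+\rho)-\rho)$ unwinds — using that $w'\in W$ acts on $\Sym\lie{h}=P(\lie{h}^*)$ by $(w'p)(\mu)=p(w'^{-1}\mu)$ — to $\nu(p+\rho\circ\text{shift})$; after cancelling the $\rho$-shift against the same shift hidden in the $*$-action one gets exactly $\nu(\beta^{w\Pi}(\xi_{\n}))$, and similarly $(w'*\lambda)(\h^{\n})$ must be reconciled with $\lambda(\h^{\n})$ — here I expect to need the compatibility between the $*$-action on the $\lambda$-slot and the ordinary $W$-action on the $\nu$-slot that is built into the very definition of $\h^{\Pi'}(\omega;\lambda,\nu)$, namely that the $\one{\lie{h}}$-factor transforms by the ordinary action while the $\overline{\lie{h}}$-factor carries the $\rho$-shift because minimal types are indexed by $\overline{\nu}$ and central characters by $\lambda+\rho$. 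Matching the two $\rho$-shifts is the one place a sign or a shift could go wrong, so I would do this evaluation slowly.

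\textbf{Main obstacle.} The genuinely delicate point is the bookkeeping of the two parameters and the $\rho$-shift in part (1) — identifying precisely which weight of the $\qhat$-highest vector $v$ records $\lambda$ (via $\one{\lie{h}}$, equivalently via $\xi=2(\lambda+\rho)-\nu$) and which records $\nu$ (via $\overline{\lie{h}}$), and why the projection that appears is $\beta^{w^{-1}\Pi}$ rather than $\beta^{w\Pi}$ — together with the companion $\rho$-shift reconciliation in the equivariance computation of part (2). The algebraic skeleton (PBW, $\qhat v=0$, $\beta$ is a ring map, change-of-Borel for $\beta$) is routine; it is the indexing conventions that require care, and I would anchor them by checking the $\lie{sl}_2(\C)$ case explicitly, as the paper does in the next subsection.
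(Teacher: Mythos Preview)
Your approach diverges from the paper's in a way that leaves a real gap. As the article explains in the paragraph preceding Theorem~\ref{Tprv4} and in the concluding ``Remarks'' of that section, the logic in \cite{PRV2} runs in the \emph{opposite} order: one first proves the finite-dimensional statement (Theorem~\ref{Tprv4}), computing $\eta_{V(\lambda,\mu),\overline{\nu}}$ explicitly on a genuine $\qhat$-highest weight vector $v_\lambda\otimes v'_{w_\circ\mu}$; since $\eta_{V(\lambda,\mu),\overline{\nu}}$ is visibly a ring homomorphism (action on a line), this shows $\h^{w^{-1}\Pi}(-;\lambda,\nu)$ is multiplicative on a Zariski-dense set of $(\lambda,\nu)$, hence everywhere. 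Equivariance is handled the same way. Only then is part~(1) for $\pihat_{\lambda,\nu}$ deduced, using Harish-Chandra's concrete realization (Theorem~\ref{Thar}).

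Your direct attack on part~(2) is where the argument breaks. You assert that the projection $\Omega\to\lie{U}\overline{\lie{g}}\otimes\lie{U}\one{\lie{h}}$ modulo $(\lie{U}\ghat)\qhat$ is multiplicative ``since $\omega$ centralizes $\overline{\lie{g}}$''. But $(\lie{U}\ghat)\qhat$ is only a left ideal, and $\qhat\not\subset\overline{\lie{g}}$, so centralizing $\overline{\lie{g}}$ gives you nothing here: for $\omega_1,\omega_2\in\Omega$ with $\omega_2\in(\lie{U}\ghat)\qhat$ there is no reason for $\omega_2\omega_1\in(\lie{U}\ghat)\qhat$. Composing further with $\beta^{w\Pi}\otimes\mathrm{id}$ compounds the problem, since the $\xi_{\n}$'s for a product $\omega_1\omega_2$ are not simply products of those for $\omega_1$ and $\omega_2$. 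This is precisely why \cite{PRV2} routes through the finite-dimensional case and Zariski density rather than arguing algebraically. Your part~(1) has a parallel issue: you claim the minimal-type highest weight vector $v$ in $\pihat_{\lambda,\nu}$ satisfies $\qhat\cdot v=0$ and is an $\one{\lie{h}}$-eigenvector of weight $\lambda$, but $\pihat_{\lambda,\nu}$ is an abstract subquotient of $L^2(K)$ with no a~priori $\one{\lie{h}}$-weight decomposition, and the minimal type is singled out only as a $\overline{\lie{g}}$-submodule. Any such statement requires Harish-Chandra's explicit model of $\pi_{\xi,\nu}$, which you do not invoke.
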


\noindent A consequence of this result is a ``first step" towards the
classification of the representations $\pihat_{\lambda,\nu}$. (This is
discussed at greater length in Section \ref{Sconc}.)

\begin{cor}\label{Cequiv}
Suppose $(\lambda,\nu), (\lambda',\nu') \in \lie{h}^* \times \Lambda$.
Then $\pihat_{\lambda,\nu} \cong \pihat_{w*\lambda, w \nu}$ for all $w
\in W$, while $\pihat_{\lambda, \nu}$ and $\pihat_{\lambda', \nu'}$ are
not equivalent if $\nu' \notin W \nu$.
\end{cor}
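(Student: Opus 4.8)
The plan is to deduce both statements directly from Theorem \ref{Tprv3} together with Equation \eqref{Ehar}. For the first assertion, fix $w \in W$ and observe that both $\pihat_{\lambda,\nu}$ and $\pihat_{w*\lambda,\,w\nu}$ are simple objects of $\scrc(\lie{g}\times\lie{g},\overline{\lie{g}})$ whose minimal type is $\vg(\overline{\nu})$, occurring with multiplicity $1$ (note $\overline{w\nu}=\overline{\nu}$ since $w\nu$ and $\nu$ lie in the same $W$-orbit). By \eqref{Ehar}, to conclude $\pihat_{\lambda,\nu}\cong\pihat_{w*\lambda,\,w\nu}$ it suffices to check that the two homomorphisms $\eta_{\pihat_{\lambda,\nu},\,\overline{\nu}}$ and $\eta_{\pihat_{w*\lambda,\,w\nu},\,\overline{\nu}}$ agree on $\Omega$. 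Pick $u\in W$ with $u\nu\in\Lambda^+$; then $u w^{-1}\cdot(w\nu)=u\nu\in\Lambda^+$ as well. Applying Theorem \ref{Tprv3}(1) to the pair $(\lambda,\nu)$ with the element $u$, and to the pair $(w*\lambda,\,w\nu)$ with the element $uw^{-1}$, we get
\[
\eta_{\pihat_{\lambda,\nu},\,\overline{\nu}}(\omega)=\h^{u^{-1}\Pi}(\omega;\lambda,\nu),
\qquad
\eta_{\pihat_{w*\lambda,\,w\nu},\,\overline{\nu}}(\omega)=\h^{(uw^{-1})^{-1}\Pi}(\omega;w*\lambda,\,w\nu)=\h^{wu^{-1}\Pi}(\omega;w*\lambda,\,w\nu)
\]
for all $\omega\in\Omega$. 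Now invoke the $W$-equivariance of Theorem \ref{Tprv3}(2): taking $w'=w$ and replacing $w$ there by $u^{-1}$ (so $w'w=wu^{-1}$) gives $\h^{wu^{-1}\Pi}(\omega;w*\lambda,\,w\nu)=\h^{u^{-1}\Pi}(\omega;\lambda,\nu)$. Hence the two $\eta$-maps coincide, and \eqref{Ehar} yields the isomorphism.

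For the second assertion, suppose $\nu'\notin W\nu$. By the second bullet in the list of properties of $\pihat_{\lambda,\nu}$ (established in \cite{PRV2}), the minimal type of $\pihat_{\lambda,\nu}$ is $\vg(\overline{\nu})$ with $\overline{\nu}\in\Lambda^+\cap W\nu$, and likewise the minimal type of $\pihat_{\lambda',\nu'}$ is $\vg(\overline{\nu'})$ with $\overline{\nu'}\in\Lambda^+\cap W\nu'$. Since $\nu'\notin W\nu$ we have $W\nu'\neq W\nu$, so $\overline{\nu'}\neq\overline{\nu}$ (each $W$-orbit meets $\Lambda^+$ in exactly one point). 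An isomorphism $\pihat_{\lambda,\nu}\cong\pihat_{\lambda',\nu'}$ would force their minimal types to agree, a contradiction; alternatively, this is exactly the remark following the list of properties, which deduces non-equivalence from \eqref{Ehar} applied to $\cald=\vg(\overline{\nu})$ (for which $[\pihat_{\lambda,\nu}:\cald]=1$ while $[\pihat_{\lambda',\nu'}:\cald]=0$).

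The only genuinely delicate point is the bookkeeping in the first part: one must choose the Weyl-group element used to apply Theorem \ref{Tprv3}(1) consistently for the two representations, and then match the resulting superscripts $u^{-1}\Pi$ and $wu^{-1}\Pi$ via the equivariance relation with the correct choice of $w'$. Everything else is immediate from the stated multiplicity-one properties and from \eqref{Ehar}. I do not expect any serious obstacle beyond verifying that $u\nu\in\Lambda^+$ implies $(uw^{-1})(w\nu)\in\Lambda^+$, which is trivial, and that $\overline{w\nu}=\overline{\nu}$, which is just uniqueness of the dominant representative in a $W$-orbit.
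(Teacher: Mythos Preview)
Your proof is correct and follows exactly the approach the paper intends: the first assertion is deduced from Theorem~\ref{Tprv3}(1) and the $W$-equivariance in Theorem~\ref{Tprv3}(2), combined with Equation~\eqref{Ehar}, while the second assertion is precisely the observation (already made in the paper just before the Corollary) that distinct $W$-orbits yield distinct minimal types $\overline{\nu}\neq\overline{\nu'}$. Your bookkeeping with $u$ and $uw^{-1}$ is the right way to make the equivariance identity match up, and there are no gaps.
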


\begin{remark}
Another consequence is the following. Note that since every Verma module
has a unique simple quotient, hence there exists a unique maximal (left)
ideal $\lie{M}_\lambda \subset \lie{Ug}$ containing $\lie{n}^+$ and $\ker
\lambda$. If $\lambda \in \Lambda^+$, then by \cite{Har1},
\[ \lie{M}_\lambda = (\lie{Ug}) \lie{n}^+ + (\lie{Ug}) \ker \lambda +
\sum_{i \in I} \lie{Ug} \cdot f_i^{\lambda(h_i) + 1}. \]

\noindent Thus whenever $w(\nu) \in \Lambda^+$ for $\nu \in \Lambda$ and
$w \in W$, there exists a unique maximal ideal in $\lie{U}
\overline{\lie{g}}$ containing $\ker \nu \subset \overline{\lie{h}}$ and
$\{ \overline{e_\alpha} : \alpha \in w^{-1}(R^+) \}$, where $e_\alpha$
spans $\lie{g}_\alpha$. Call this ideal $\overline{\lie{M}}_\nu$. Now
since $\pihat_{\lambda,\nu}$ is a simple $\lie{g} \times \lie{g}$-module,
it is generated by the $\overline{\lie{g}}$-(maximal) weight vector
$v_{\overline{\nu}} \in \vg(\overline{\nu})_{\overline{\lie{\nu}}}
\subset \pihat_{\lambda,\nu}$.
Moreover, $\Omega$ acts on $\vg(\overline{\nu})$ by $\h^{w^{-1} \Pi}(-;
\lambda, \nu)$. By Equation \eqref{Ehar}, this data uniquely determines
$\pihat_{\lambda,\nu}$ up to isomorphism. This implies the following
result from \cite{PRV1}:

{\it There exists a unique maximal ideal $\lie{M}_{\lambda,\nu} \subset
\lie{U}(\lie{g} \times \lie{g})$ containing $\ker \h^{w^{-1}\Pi}(-;
\lambda, \nu) \subset \Omega$ and $\overline{\lie{M}}_\nu$. Moreover,
$\pihat_{\lambda,\nu} \cong \lie{U}(\lie{g} \times \lie{g}) /
\lie{M}_{\lambda,\nu}$.}
\end{remark}

Note that $\h^{\Pi'} : \Omega \to P(\lie{h}^* \times \lie{h}^*)$ is a
homomorphism for each $\Pi' = w \Pi$. Thus, $\h^{\Pi'}(-; \lambda, \nu)$
is a homomorphism $: \Omega \to \C$ for all $\lambda, \nu \in \lie{h}^*$.
The strategy in \cite{PRV2} for showing this is to restrict to the
Zariski dense subset of $(\lambda,\nu)$ arising from finite-dimensional
modules. The authors prove that if $\mu$ and $\Pi'$ are chosen
``suitably", then $\h^{\Pi'}(-; \lambda, \nu) \equiv
\eta_{V(\lambda,\mu), \overline{\nu}}(-)$ is a homomorphism $: \Omega \to
\C$. Hence so is $\h^{\Pi'}$ at all values of $(\lambda,\nu)$. This
analysis leads to the next topic.

\subsection{Relationship between $V(\lambda,\mu)$ and $\h^{\Pi'}(-;
\lambda,\nu)$}

Consider the other family of simple $\scrc(\lie{g} \times \lie{g},
\overline{\lie{g}})$-modules studied in \cite{PRV2}: the
finite-dimensional $V(\lambda,\mu)$. From above, the minimal type of such
a module is $\overline{\lambda + w_\circ \mu}$. Now consider the
``converse" question: given $\lambda \in \lie{h}^*$ and $\nu \in
\Lambda$, is it possible to produce $\mu \in \Lambda^+$ such that
$V(\lambda,\mu)$ has minimal type $\overline{\nu}$? Supporting evidence
for such a claim is given by the following result, in light of Equation
\eqref{Ehar}.

\begin{theorem}[\cite{PRV2}]\label{Tprv4}
Suppose $\lambda \in \Lambda^+$ and $\nu \in \lambda - \Lambda^+$. Choose
$w \in W$ such that $w(\nu) \in \Lambda^+$ and define $\mu :=
-w_\circ(\lambda - \nu) \in \Lambda^+$. Then $V(\lambda,\mu)$ has minimal
type $\vg(\overline{\nu})$; moreover,
\[ \eta_{V(\lambda,\mu), \overline{\nu}}(-) \equiv \h^{w^{-1} \Pi}(-;
\lambda, \nu). \]
\end{theorem}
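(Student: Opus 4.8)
The plan is first to dispose of the claim about the minimal type, and then to establish the identity of the two homomorphisms by evaluating each on one cleverly chosen vector.

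\emph{The minimal type.} Since $-w_\circ$ preserves $\Lambda^+$ and $\lambda-\nu\in\Lambda^+$ by hypothesis, the weight $\mu:=-w_\circ(\lambda-\nu)$ lies in $\Lambda^+$, so $V(\lambda,\mu)$ is a genuine finite-dimensional simple $\lie{g}\times\lie{g}$-module, and its restriction to $\overline{\lie{g}}$ is $\vg(\lambda)\otimes\vg(\mu)$. Now $w_\circ\mu=w_\circ\bigl(-w_\circ(\lambda-\nu)\bigr)=\nu-\lambda$, whence $\lambda+w_\circ\mu=\nu$ and $\overline{\lambda+w_\circ\mu}=\overline{\nu}$. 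Theorem~\ref{Tprv2} then gives at once that $\vg(\overline{\nu})$ is the minimal type of $V(\lambda,\mu)$ and that $m^{\overline{\nu}}_{\lambda,\mu}=1$; so the $\overline{\lie{g}}$-isotypical component $\cald:=V(\lambda,\mu)_{\vg(\overline{\nu})}$ is irreducible and isomorphic to $\vg(\overline{\nu})$, and $\Omega$ acts on it through the scalar character $\eta_{V(\lambda,\mu),\overline{\nu}}$.

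\emph{The key vector.} Let $v_\lambda$ be a highest weight vector of $\vg(\lambda)$ and $v^-_\mu$ a lowest weight vector of $\vg(\mu)$ (of weight $w_\circ\mu=\nu-\lambda$), and put $v_0:=v_\lambda\otimes v^-_\mu\in V(\lambda,\mu)$. Three features of $v_0$ drive the argument: (i) it has $\one{\lie{h}}$-weight $\lambda$ and $\overline{\lie{h}}$-weight $\lambda+(\nu-\lambda)=\nu$; (ii) it is annihilated by $\qhat=\one{\lie{n}}^+\oplus\two{\lie{n}}^-$, since $\one{\lie{n}}^+$ kills $v_\lambda$ and $\two{\lie{n}}^-$ kills $v^-_\mu$; and (iii) it generates $\vg(\lambda)\otimes\vg(\mu)$ as a $\overline{\lie{g}}$-module — this is the classical fact recorded in the discussion preceding Theorem~\ref{Tkprv}. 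From (iii) the projection $\pi_\cald\colon V(\lambda,\mu)\twoheadrightarrow\cald$ cannot kill $v_0$, for otherwise $v_0$, and hence the submodule it generates, would lie in the proper submodule $\bigoplus_{\cald'\not\cong\cald}V(\lambda,\mu)_{\cald'}$. Since $\nu=w^{-1}\overline{\nu}$ is an extremal weight of $\vg(\overline{\nu})$, the weight space $\cald_\nu$ is one-dimensional, so $\pi_\cald(v_0)$ is a nonzero multiple of the extremal vector $u_\nu$ of weight $\nu$; moreover $u_\nu$ is annihilated by $\lie{n}^+_{w^{-1}}$ (because for $\gamma\in R\cap\Z_{\geq0}(w^{-1}\Pi)$, writing $\gamma=w^{-1}\delta$ with $\delta\in R^+$, the weight $\nu+\gamma=w^{-1}(\overline{\nu}+\delta)$ is not a weight of $\vg(\overline{\nu})$), i.e.\ $u_\nu$ is a highest weight vector for the Borel with simple roots $w^{-1}\Pi$.

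\emph{The computation.} Given $\omega\in\Omega$, write $\omega\equiv\sum_{\n\geq{\bf 0}}\overline{\xi_{\n}}\otimes\one{\h}^{\n}\bmod(\lie{U}\ghat)\qhat$ with $\xi_{\n}\in(\lie{Ug})_0$, as in the construction of $\h^{\Pi'}$. Applying $\omega$ to $v_0$, using (ii) to discard the terms in $(\lie{U}\ghat)\qhat$ and (i) to evaluate $\one{\h}^{\n}$ on $v_0$ by the scalar $\lambda(\h^{\n})$, we get $\omega\cdot v_0=\sum_{\n}\lambda(\h^{\n})\,\overline{\xi_{\n}}\cdot v_0$. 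Now apply $\pi_\cald$: it is $\overline{\lie{g}}$-equivariant, and it commutes with $\Omega$ since every element of $\Omega$ preserves each $\overline{\lie{g}}$-isotypical component. Hence, abbreviating $\eta=\eta_{V(\lambda,\mu),\overline{\nu}}$,
\[ \eta(\omega)\,\pi_\cald(v_0)=\omega\cdot\pi_\cald(v_0)=\pi_\cald(\omega\cdot v_0)=\sum_{\n}\lambda(\h^{\n})\,\overline{\xi_{\n}}\cdot\pi_\cald(v_0). \]
As $\pi_\cald(v_0)$ is a nonzero multiple of $u_\nu$, and $\overline{\xi_{\n}}$ acts on $\cald\cong\vg(\overline{\nu})$ the way $\xi_{\n}\in(\lie{Ug})_0$ acts on $\vg(\overline{\nu})$, the Harish-Chandra projection for the Borel with simple roots $w^{-1}\Pi$ (Theorem~\ref{Thc}) gives $\overline{\xi_{\n}}\cdot u_\nu=\nu\bigl(\beta^{w^{-1}\Pi}(\xi_{\n})\bigr)\,u_\nu$. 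Cancelling $u_\nu$ yields
\[ \eta_{V(\lambda,\mu),\overline{\nu}}(\omega)=\sum_{\n\geq{\bf 0}}\nu\bigl(\beta^{w^{-1}\Pi}(\xi_{\n})\bigr)\,\lambda(\h^{\n})=\h^{w^{-1}\Pi}(\omega;\lambda,\nu), \]
which is the assertion.

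\emph{On the main obstacle.} Everything except one point is bookkeeping with the decomposition $\lie{U}\ghat\cong(\lie{U}\overline{\lie{g}}\otimes\lie{U}\one{\lie{h}})\oplus(\lie{U}\ghat)\qhat$ and the definition of the scalar $\eta_{V,\cald}$. The one substantive step is the passage through the minimal-type isotypical component $\cald$: that $\pi_\cald(v_0)\neq0$ (extracted from the generation statement (iii)), and that extremality of the weight $\nu$ forces its image to be a highest weight vector for the \emph{twisted} Borel with simple roots $w^{-1}\Pi$ — this is exactly what produces $\beta^{w^{-1}\Pi}$ rather than $\beta^\Pi$. Finally, independence of the right-hand side from the choice of $w$ with $w\nu\in\Lambda^+$ is automatic, since the left-hand side does not involve $w$ (alternatively, it follows from the $W$-equivariance in Theorem~\ref{Tprv3}(2)).
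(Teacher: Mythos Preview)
Your argument is correct. The paper itself does not supply a proof of Theorem~\ref{Tprv4}: it is an exposition which merely states this result (attributing it to \cite{PRV2}), and the only verification offered is the explicit $\lie{sl}_2(\C)$ computation in Section~\ref{Sexample}, where both $\eta_{V(n_1,n_2),n_1-n_2}$ and $\h^\Pi(-;n_1,n_1-n_2)$ are evaluated by hand on the three generators $\Delta_1,\Delta_2,\overline{\Delta}$ of a subalgebra $\Omega'\subset\Omega$.

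Your approach is the natural conceptual one and is essentially how \cite{PRV2} itself proceeds. The only point worth flagging is that when you invoke Theorem~\ref{Thc} to justify $\overline{\xi_{\n}}\cdot u_\nu=\nu\bigl(\beta^{w^{-1}\Pi}(\xi_{\n})\bigr)\,u_\nu$, you are really using only the \emph{definition} of $\beta^{w^{-1}\Pi}$ as the projection $(\lie{Ug})_0\to\lie{Uh}$ along $\lie{n}^-_{w^{-1}}(\lie{Ug})\lie{n}^+_{w^{-1}}$, together with the fact that $\xi_{\n}\in(\lie{Ug})_0$ forces $\xi_{\n}\cdot u_\nu$ to land back in the one-dimensional extremal weight space $\vg(\overline{\nu})_\nu$; you do not need the finer assertions of that theorem (ring homomorphism, isomorphism onto $W$-invariants). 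Otherwise the bookkeeping is clean, and your identification of the single substantive step --- that $\pi_\cald(v_0)\neq 0$ via the cyclicity of $v_\lambda\otimes v'_{w_\circ\mu}$, combined with extremality forcing the image to be a highest weight vector for the $w^{-1}\Pi$-Borel --- is exactly right.
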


\noindent Restating this allows us to answer the ``converse" question,
which can also be found in \cite{Du2}.

\begin{cor}\label{Cprv}
For all $\lambda, \mu \in \Lambda^+$, $\displaystyle V(\lambda, \mu)
\cong \pihat_{\lambda,\lambda + w_\circ \mu}$.
\end{cor}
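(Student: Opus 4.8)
The plan is to obtain Corollary~\ref{Cprv} by matching the two families of simple objects through their ``key homomorphisms'' into $\Omega$ and then invoking the rigidity statement~\eqref{Ehar}. Fix $\lambda,\mu \in \Lambda^+$ and set $\nu := \lambda + w_\circ \mu \in \Lambda$. Both $V(\lambda,\mu)$ and $\pihat_{\lambda,\nu}$ are simple objects of $\scrc(\lie{g}\times\lie{g},\overline{\lie{g}})$ --- the former by Weyl's theorem of complete reducibility, the latter because it is an irreducible $G$-representation lying in this category --- and each contains the minimal type $\cald := \vg(\overline{\nu})$ with multiplicity exactly $1$: for $V(\lambda,\mu)$ this is Theorem~\ref{Tprv2} applied to $\vg(\lambda)\otimes\vg(\mu)$ (equivalently, the minimal-type assertion of Theorem~\ref{Tprv4}), and for $\pihat_{\lambda,\nu}$ it is one of the properties recorded for it above. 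Hence $[V(\lambda,\mu):\cald] = [\pihat_{\lambda,\nu}:\cald] = 1 > 0$, so by~\eqref{Ehar} it suffices to prove that the homomorphisms $\eta_{V(\lambda,\mu),\overline{\nu}}$ and $\eta_{\pihat_{\lambda,\nu},\overline{\nu}}$ from $\Omega$ to $\C$ agree.

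To compare them, I would fix once and for all an element $w \in W$ with $w(\nu) \in \Lambda^+$, so that $\overline{\nu} = w(\nu)$. Theorem~\ref{Tprv3}(1), valid since $\lambda \in \Lambda^+ \subset \lie{h}^*$ and $\nu \in \Lambda$, gives $\eta_{\pihat_{\lambda,\nu},\overline{\nu}}(-) \equiv \h^{w^{-1}\Pi}(-;\lambda,\nu)$. To apply Theorem~\ref{Tprv4} with the same $w$ and the same point $(\lambda,\nu)$, I would check its hypothesis $\nu \in \lambda - \Lambda^+$: since $\lambda - \nu = -w_\circ\mu$ and $-w_\circ$ preserves $\Lambda^+$, this holds. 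Theorem~\ref{Tprv4} then feeds back the weight $-w_\circ(\lambda-\nu) = -w_\circ(-w_\circ\mu) = \mu$ (using $w_\circ = w_\circ^{-1}$), i.e.\ exactly the $\mu$ we started with, and yields $\eta_{V(\lambda,\mu),\overline{\nu}}(-) \equiv \h^{w^{-1}\Pi}(-;\lambda,\nu)$. Comparing the two identifications, $\eta_{V(\lambda,\mu),\overline{\nu}} = \eta_{\pihat_{\lambda,\nu},\overline{\nu}}$, so~\eqref{Ehar} gives $V(\lambda,\mu)\cong\pihat_{\lambda,\nu} = \pihat_{\lambda,\lambda+w_\circ\mu}$, as claimed.

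The only place that needs care is this bookkeeping: one must ensure that Theorems~\ref{Tprv3} and~\ref{Tprv4} are applied with a \emph{common} choice of $w$ and a common parameter $(\lambda,\nu)$, and verify that the $\mu$ returned by Theorem~\ref{Tprv4} is the original one. Both points reduce to the elementary facts $w_\circ = w_\circ^{-1}$ and $-w_\circ(\Lambda^+) = \Lambda^+$. If $\nu$ is not regular there may be several admissible $w$, but the $W$-equivariance of Theorem~\ref{Tprv3}(2) shows the resulting homomorphism is independent of that choice, so no ambiguity arises. There is no genuine analytic difficulty left at this stage: all of the substance was already carried out in Theorems~\ref{Tprv3} and~\ref{Tprv4}.
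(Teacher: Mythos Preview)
Your argument is correct and follows essentially the same route as the paper's own proof: set $\nu = \lambda + w_\circ\mu$, verify $\nu \in \lambda - \Lambda^+$, pick $w$ with $w(\nu) \in \Lambda^+$, identify both $\eta$-homomorphisms with $\h^{w^{-1}\Pi}(-;\lambda,\nu)$ via Theorems~\ref{Tprv3} and~\ref{Tprv4}, and conclude by~\eqref{Ehar}. Your added remarks on the bookkeeping (that Theorem~\ref{Tprv4} returns the original $\mu$, and that the choice of $w$ is immaterial by $W$-equivariance) are correct and make explicit what the paper leaves implicit.
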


\begin{proof}
Note that $\nu = \lambda + w_\circ \mu \in \lambda - \Lambda^+
\Leftrightarrow \mu = -w_\circ (\lambda - \nu) \in \Lambda^+$. Thus, set
$\nu := \lambda + w_\circ \mu$ and choose $w \in W$ such that $w(\nu) \in
\Lambda^+$. Then using Theorems \ref{Tprv3} and \ref{Tprv4},
\[ \eta_{\pihat_{\lambda,\lambda + w_\circ \mu}, \overline{\nu}}(-)
\equiv \h^{w^{-1} \Pi}(-; \lambda, \lambda + w_\circ \mu) \equiv
\eta_{V(\lambda, \mu), \overline{\nu}}(-) \]

\noindent on $\Omega$. Moreover, $\overline{\nu}$ is the (multiplicity
one) minimal type component of both irreducible modules, by the PRV
Theorem. The result follows by applying Equation \eqref{Ehar}.
\end{proof}

\begin{remark}
A word of caution: note that $V(\lambda) \otimes V(\mu) \cong V(\mu)
\otimes V(\lambda)$ as $\overline{\lie{g}}$-modules for $\lambda, \mu \in
\lie{h}^*$, since $\lie{Ug}$ is cocommutative. Thus, their minimal types
are also equal (when $\lambda, \mu \in \Lambda^+$): $\overline{\lambda +
w_\circ \mu} = \overline{\mu + w_\circ \lambda}$. This implies that the
action of $Z(\lie{U} \overline{\lie{g}})$ is equal on both modules.
However, $V(\lambda,\mu)$ and $V(\mu,\lambda)$ are non-isomorphic simple
$\lie{g} \times \lie{g}$-modules if $\lambda \neq \mu$. Similarly, the
infinitesimal characters are not equal on all of $Z(\lie{U}(\lie{g}
\times \lie{g}))$ - and hence the key homomorphisms
$\eta_{V(\lambda,\mu), \overline{\nu}}, \eta_{V(\mu,\lambda),
\overline{\nu}}$ do not agree on all of $\Omega$ - unless $\lambda =
\mu$.
\end{remark}

\subsection{Example: the case of $\lie{sl}_2(\C)$}\label{Sexample}

We now verify a part of Theorem \ref{Tprv4} in the special case of
$\lie{g} = \lie{sl}_2(\C)$, in which case it is not hard to compute both
the homomorphisms in question - at least, on a particular finitely
generated subalgebra $\Omega' \subset \Omega$.

For convenience, denote the two factors in $\ghat$ as $\lie{g}_k$ for
$k=1,2$ (as opposed to $\one{\lie{g}}$ and $\two{\lie{g}}$), with bases
$\{ e_k, f_k, h_k \}$. Now for all $0 < n_2 \leq n_1 \in \N$, the
``tensor product" module $V_1(n_1) \otimes V_2(n_2)$ is a simple object
in $\scrc(\lie{g} \times \lie{g}, \overline{\lie{g}})$, of highest weight
$(n_1,n_2)$. Restricted to $\overline{\lie{g}}$, it decomposes according
to the Clebsch-Gordan coefficients:
\[ V(n_1, n_2) \cong_{\overline{\lie{g}}} V(n_1) \otimes V(n_2) \cong
\vg(n_1+n_2) \oplus \vg(n_1+n_2-2) \oplus \dots \oplus \vg(n_1-n_2), \]

\noindent where $\vg(n)$ is a finite-dimensional irreducible
$\overline{\lie{g}}$-module of highest weight $n$ (equivalently, of
dimension $n+1$). Now denote the highest weight generators of $V_i(n_i)$
by $v_{n_1}$ and $v'_{n_2}$ respectively, and define the weight basis
$v_{n_1-2i} := (f^i/i!) v_{n_1}$ of $V_1(n_1)$, with $0 \leq i < \dim
V_1(n_1)$. Similarly define $v'_{n_2-2j} \in V_2(n_2)$. One checks using
Equation \eqref{Esl2} that
\[ v''_{n_1-n_2} := \sum_{j=0}^{n_2} (-1)^j j! \cdot n_1 (n_1-1) \cdots
(n_1-j+1) \cdot v_{n_1-2j} \otimes v'_{2j-n_2} \]

\noindent is a weight vector in $V(n_1, n_2)$ which is killed by $e_1 +
e_2 := e_1 \otimes 1 + 1 \otimes e_2$. Hence it generates the minimal
type (i.e., the PRV component) $\vg(n_1-n_2)$, and we have:
\[ \lambda = n_1, \qquad \mu = n_2, \qquad \nu = \lambda + w_\circ \mu =
n_1 - n_2 \geq 0, \qquad \overline{\nu} = \nu \in \Lambda^+. \]

Note that the subalgebra $\Omega$ that commutes with $\overline{\lie{g}}$
in $\lie{U}(\ghat)$ contains the center of $\lie{U}(\ghat)$. This is
freely generated by the two Casimir operators
\[ \Delta_i := 4 f_i e_i + h_i^2 + 2 h_i = 4 e_i f_i + h_i^2 - 2 h_i. \]

\noindent Clearly, $\Delta_1 \otimes 1$ acts on $V_1(n_1) \otimes
V_2(n_2)$ by the scalar $n_1^2 + 2 n_1$; similarly, $1 \otimes \Delta_2$
acts by $n_2^2 + 2 n_2$.
Moreover, $\overline{\Delta} = 4 (f_1+f_2)(e_1+e_2) + (h_1+h_2)^2 +
2(h_1+h_2)$ lies in $\Omega$ as well; by $\lie{sl}_2$-theory, it acts on
$v''_{n_1-n_2}$ via: $\overline{\Delta} \cdot v''_{n_1-n_2} = ((n_1 -
n_2)^2 + 2(n_1 - n_2)) v''_{n_1-n_2}$.
Since $\overline{\Delta}$ commutes with $\overline{\lie{g}}$, it acts on
$\vg(n_1-n_2)$ by this same scalar. This allows us to determine
$\eta_{V(n_1, n_2), n_1 - n_2}$ - at least, on the subalgebra $\Omega' :=
\C[\Delta_1, \Delta_2, \overline{\Delta}] \subset \Omega$:
\begin{equation}\label{Ecalc}
\eta_{V(n_1,n_2), n_1-n_2}(\Delta_i) := n_i^2 + 2 n_i =
\chi(n_i)(\Delta)\ (i=1,2), \qquad \eta_{V(n_1,n_2),
n_1-n_2}(\overline{\Delta}) := \chi(n_1-n_2)(\Delta).
\end{equation}

Now consider $\h^\Pi(-;\lambda,\nu)$. Note from above that
$\overline{\nu} = \nu = \lambda + w_\circ \mu = n_1 - n_2$, so $w=1$.
Since $\qhat$ is spanned by $e_1$ and $f_2$, projecting the various
$\Delta_i$ onto $\lie{U} \overline{\lie{g}} \otimes \lie{U}
\one{\lie{h}}$ modulo $\qhat$ yields
\[ \Delta_1 = 4 f_1 e_1 + h_1^2 + 2 h_1 \equiv h_1^2 + 2 h_1 \mod
(\lie{U} \ghat) \qhat. \]

\noindent Hence $\h^\Pi(\Delta_1; \lambda, \nu) = \lambda(h_1)^2 + 2
\lambda(h_1) = n_1^2 + 2 n_1$, as in Equation \eqref{Ecalc}. Similarly,
\begin{eqnarray*}
\Delta_2 & = & 4 e_2 f_2 + h_2^2 - 2 h_2 \equiv h_2^2 - 2 h_2 \equiv (h_1
+ h_2)^2 - 2(h_1 + h_2) - h_1^2 - 2 h_1 h_2 + 2 h_1\\
& \equiv & \overline{h}^2 - 2 \overline{h} - 2 \overline{h} \otimes h_1 +
h_1^2 + 2 h_1 \mod (\lie{U} \ghat) \qhat.
\end{eqnarray*}

\noindent Computing $\h^\Pi(\Delta_2; \lambda, \nu)$ amounts to
evaluating this polynomial at $(\nu(\overline{h}), \lambda(h_1)) = (n_1 -
n_2, n_1)$. This yields $n_2^2 + 2 n_2$, as desired. Finally,
$\overline{\Delta} = 2 \overline{f} \overline{e} + \overline{h}^2 + 2
\overline{h}$. To evaluate $\h^\Pi(\overline{\Delta}; \lambda,\nu)$, we
must first apply the Harish-Chandra projection $\beta^\Pi$ to this
expression, which kills the first term. Now evaluating at $(\overline{h},
h_1) = (n_1 - n_2, n_1)$, we obtain $\chi(n_1-n_2)(\Delta)$, as in
Equation \eqref{Ecalc}. Thus, $\h^\Pi(\Delta'; n_1, n_1-n_2) \equiv
\eta_{V(n_1, n_2), n_1-n_2}(\Delta')$ for $\Delta' = \Delta_1, \Delta_2,
\overline{\Delta}$; assuming that both of these are homomorphisms implies
equality on all of $\Omega'$. \qed

\subsection{Infinitesimal characters}\label{Schar}

Although it does not seem to be explicitly mentioned in \cite{PRV2}, the
above facts allow us to compute the infinitesimal characters of the
representations $\pi_{\xi,\nu}$ - or equivalently, of
$\pihat_{\lambda,\nu}$. In fact, we prove a stronger result.

\begin{theorem}\label{Tchar}
For all $\lambda, \nu \in \lie{h}^*$, $w \in W$, and $z \in
Z(\lie{U}(\lie{g} \times \lie{g}))$,
\[ \h^{w \Pi}(z; \lambda, \nu) = \chi(\lambda, \nu - \lambda - 2
\rho)(z), \]

\noindent where $\chi(\lambda, \lambda')$ is the central character for
the $\lie{g} \times \lie{g}$-Verma module $M(\lambda, \lambda')$. Now
given $\xi \in \lie{h}^*$ and $\nu \in \Lambda$, define $\lambda :=
\frac{1}{2}(\xi + \nu) - \rho$.
Then $\chi_{\pi_{\xi,\nu}} = \chi_{\pihat_{\lambda,\nu}} = \chi(\lambda,
\nu - \lambda - 2 \rho)$.\footnote{This assertion about central
characters of Harish-Chandra modules can be found in \cite{Du2}, for
instance.}
Moreover, $\chi_{\pi_{w\xi,w\nu}} = \chi_{\pihat_{w * \lambda, w \nu}} =
\chi(w * \lambda, w * (\nu - \lambda - 2 \rho)) = \chi(\lambda, \nu -
\lambda - 2 \rho)$.
\end{theorem}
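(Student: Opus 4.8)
The plan is to reduce the theorem to a computation inside the commutative algebra $Z(\lie{U}(\lie{g} \times \lie{g})) \cong Z(\lie{Ug}) \otimes Z(\lie{Ug})$, where everything becomes explicit via the Harish-Chandra isomorphism of Theorem \ref{Thc}. The key observation is that for $z = z_1 \otimes z_2 \in Z(\lie{Ug}) \otimes Z(\lie{Ug})$, the element $z$ already commutes with all of $\lie{U}\ghat$ (not just with $\overline{\lie{g}}$), so $z \in \Omega$, and moreover the defining congruence $z \equiv \sum_{\n} \overline{\xi_\n} \otimes \one{\h}^\n \bmod (\lie{U}\ghat)\qhat$ can be unwound directly: one writes $z_1$ via the decomposition $\lie{g} = \lie{n}^+_w \oplus \lie{h} \oplus \lie{n}^-_w$ (so that $z_1 \equiv \beta^{w\Pi}(z_1) \bmod \lie{n}^-_w(\lie{Ug})\lie{n}^+_w$), and $z_2$ likewise, and then tracks how the two factors interact under the embedding $\overline{X} = \one{X} + \two{X}$ when one projects modulo $\qhat = \one{\lie{n}}^+ \oplus \two{\lie{n}}^-$. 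The first step, then, is to carry out this bookkeeping and show that $\h^{w\Pi}(z_1 \otimes z_2; \lambda, \nu) = (\beta^{w\Pi} z_1)(\lambda)\cdot (\beta^{w\Pi} z_2)(\nu - \lambda)$, using that $\two{\lie{h}}$ gets identified (modulo $\qhat$ and the $\overline{\lie{g}}$-part) with $\overline{\lie{h}} - \one{\lie{h}}$, which on weights $(\lambda,\nu)$ reads off the value $\nu - \lambda$ on the second factor while $\lambda$ is read off the first. This identity, applied to the standard choice $w\Pi = \Pi$ and then transported by the $W$-equivariance in Theorem \ref{Tprv3}(2), gives the formula on all of $Z(\lie{U}(\lie{g}\times\lie{g}))$.

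Granting that, the second step is purely a matter of matching central characters. By Theorem \ref{Thc}, the central character of the $\lie{g}\times\lie{g}$-Verma module $M(\lambda,\lambda')$ sends $z_1 \otimes z_2$ to $(\beta^\Pi z_1)(\lambda)\,(\beta^\Pi z_2)(\lambda') = \chi(\lambda)(z_1)\,\chi(\lambda')(z_2)$. So to conclude $\h^\Pi(z; \lambda,\nu) = \chi(\lambda, \nu - \lambda - 2\rho)(z)$ I would need $(\beta^\Pi z_2)(\nu - \lambda) = (\beta^\Pi z_2)(\nu - \lambda - 2\rho)$ -- but this is false as literal polynomial evaluation, so the correct bookkeeping in step one must actually produce a $\rho$-shift: the projection of $\two{\lie{h}}$ relative to the simple system $-\two{\Pi}$ (recall $\qhat$ uses $\two{\lie{n}}^-$ as its "positive part" for the second factor) introduces exactly the shift by $-2\rho$ coming from the opposite Borel, i.e. one is really evaluating the Harish-Chandra projection for the system $-\Pi$ at the weight $\nu - \lambda$, which equals $\beta^\Pi$ evaluated at $w_\circ$-conjugate data and ultimately at $\nu - \lambda - 2\rho$ after one accounts for $\rho_{-\Pi} = -\rho$. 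I expect this $\rho$-shift, and getting its sign and the argument of $\chi$ exactly right, to be the main subtlety; the cleanest way to pin it down is to check it against the $\lie{sl}_2$ computation already worked out in Section \ref{Sexample} (there $\Delta_2 \equiv \overline{h}^2 - 2\overline{h} - \cdots$ rather than $\overline{h}^2 + 2\overline{h}$, which is precisely the $\rho$-shift manifesting), and also against the known fact that $\pi_{\xi,\nu}$ has the infinitesimal character of the principal series, symmetric in $\xi \leftrightarrow -\xi$.

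For the statements about $\pi_{\xi,\nu}$ and $\pihat_{\lambda,\nu}$: since $\pihat_{\lambda,\nu}$ is by construction a subquotient of $\pi_{\xi,\nu}$ with $\lambda = \tfrac12(\xi+\nu) - \rho$, the two have the same infinitesimal character, so it suffices to identify $\chi_{\pi_{\xi,\nu}}$. By Theorem \ref{Thar}(2) this character exists, and by Theorem \ref{Tprv3}(1) together with the fact that $Z(\lie{U}(\lie{g}\times\lie{g})) \subset \Omega$ acts on the minimal type $\vg(\overline\nu) \subset \pihat_{\lambda,\nu}$ via $\h^{w^{-1}\Pi}(-;\lambda,\nu)$, the scalar by which $z$ acts is $\h^{w^{-1}\Pi}(z;\lambda,\nu)$, which by the $W$-equivariance equals $\h^\Pi(z;\lambda,\nu) = \chi(\lambda, \nu - \lambda - 2\rho)(z)$ by the first part. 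Finally the chain $\chi_{\pi_{w\xi,w\nu}} = \chi(w*\lambda, w*(\nu-\lambda-2\rho)) = \chi(\lambda,\nu-\lambda-2\rho)$ is immediate: substituting $\xi \mapsto w\xi$, $\nu \mapsto w\nu$ into $\lambda = \tfrac12(\xi+\nu)-\rho$ gives $\lambda \mapsto \tfrac12(w\xi + w\nu) - \rho = w(\lambda+\rho) - \rho = w*\lambda$, and correspondingly $\nu - \lambda - 2\rho \mapsto w\nu - w*\lambda - 2\rho = w(\nu - \lambda - \rho) - \rho = w*(\nu - \lambda - 2\rho)$, and then $\chi(w*\eta) = \chi(\eta)$ for any $\eta$ by the last clause of Theorem \ref{Thc} (applied in each factor of $\lie{g}\times\lie{g}$). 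The only genuine work is step one; everything after it is assembly.
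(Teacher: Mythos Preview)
Your proposal is correct and follows the same overall strategy as the paper: split $z = \one{z_1} \cdot \two{z_2}$, observe that modulo $(\lie{U}\ghat)\qhat$ the first factor contributes $\beta^\Pi(z_1)(\lambda)$, while the second factor (because $\qhat$ contains $\two{\lie{n}}^-$ rather than $\two{\lie{n}}^+$) contributes the opposite-Borel projection $\beta^{w_\circ\Pi}(z_2)$ evaluated at $\nu - \lambda$ via $\two{h_i} = \overline{h_i} - \one{h_i}$; then identify this with $\chi(\nu - \lambda - 2\rho)(z_2)$. The paper carries out exactly this computation, with one tactical difference in how the $\rho$-shift is obtained: rather than invoking the identity $\mu(\beta^{w_\circ\Pi}(z)) = \chi(\mu - 2\rho)(z)$ directly as you propose, the paper derives it by the indirection
\[
(\nu-\lambda)(\beta^{w_\circ\Pi}(z)) = \h^{w_\circ\Pi}(\overline{z};\,\nu',\,\nu-\lambda) = \h^\Pi(\overline{z};\,w_\circ * \nu',\,w_\circ(\nu-\lambda)) = \chi(w_\circ(\nu-\lambda))(z),
\]
using the $W$-equivariance of $\h$ from Theorem~\ref{Tprv3}(2), and then wraps the whole argument in a Zariski-density step (first assuming $\nu \in \Lambda^+$, then extending polynomially to all $\nu$). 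Your direct route is slightly cleaner and makes the Zariski step unnecessary; the paper's indirection has the virtue of extracting the relation between $\beta^{w_\circ\Pi}$ and $\beta^\Pi$ on the center purely from the already-established equivariance of $\h$, so that no separate fact about opposite-Borel Harish-Chandra projections needs to be imported. The remaining assembly---identifying $\chi_{\pihat_{\lambda,\nu}}$ via the action on the minimal type, and the $W$-invariance chain $\chi(w*\lambda, w*(\nu-\lambda-2\rho)) = \chi(\lambda, \nu-\lambda-2\rho)$ at the end---is done identically in both.
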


\noindent For example, consider the situation where
$\pihat_{\lambda,\nu}$ is finite-dimensional. Thus, $\nu \in \lambda -
\Lambda^+$, and $\pihat_{\lambda,\nu} = V(\lambda, \mu)$, where $\mu :=
-w_\circ (\lambda - \nu) \in \Lambda^+$. Thus, $\nu = \lambda + w_\circ
\mu$. In this case, the result says that the second component of the
central character is:
\[ \chi(\nu - \lambda - 2 \rho) = \chi(\lambda + w_\circ \mu - \lambda -
2 \rho) = \chi(w_\circ \mu + w_\circ \rho - \rho) = \chi(w_\circ * \mu),
\]

\noindent since $w_\circ \rho = - \rho$. Hence $\chi_{V(\lambda,\mu)} =
\chi(\lambda,w_\circ * \mu) = \chi(\lambda, \mu)$ (by Theorem \ref{Thc}),
as expected.

\begin{proof}
We use a ``Zariski density" argument as in \cite{PRV2}, that Varadarajan
attributes to Harish-Chandra in \cite{Var}. By Harish-Chandra's Theorem
\ref{Thc}, every central character of $Z(\lie{U}(\lie{g} \times
\lie{g}))$ is of the form $\chi(\mu_1, \mu_2)$ for some $\mu_i \in
\lie{h}^*$. Moreover, $Z(\lie{U}(\lie{g} \times \lie{g})) \subset
\Omega$, so we can evaluate $\h^{w^{-1} \Pi}(-; \lambda, \nu)$ on it. For
all $z \in Z(\lie{Ug})$, one uses the definitions to check that
$\h^{w^{-1} \Pi}(\one{z}; \lambda,\nu) = (\nu \otimes \lambda)(1 \otimes
\beta^\Pi(\one{z})) = \chi(\lambda)(z)$ for all $\lambda, \nu \in
\lie{h}^*$ and all $\Pi'$.

For $\two{z}$ in the second copy of the center, first suppose that $\nu
\in \Lambda^+$. Write the Harish-Chandra projection of $z$, but now using
the decomposition $\lie{Ug} = \lie{Un}^+ \otimes \lie{Uh} \otimes
\lie{Un}^-$. In other words, compute $\beta^{w_\circ \Pi}(z)$, since
$\two{z} \equiv \two{(\beta^{w_\circ \Pi}(z))} \mod (\lie{U} \ghat)
\qhat$. Now use the basis $h_i\two{}$ of $\two{\lie{h}}$ to write out the
above as a polynomial:
\[ \two{(\beta^{w_\circ \Pi}(z))} = p(\{ h_i\two{} : i \in I \}) =
p(\{\overline{h_i} - h_i\one{} : i \in I \}). \]

\noindent Hence $\h^\Pi(\two{z}; \lambda, \nu)$ involves acting by $\nu$
on $\overline{h_i}$ and by $\lambda$ on $h_i\one{}$. Now recall that for
all $z \in Z(\lie{Ug})$, $\overline{z} \in Z(\lie{U}\overline{\lie{g}})
\subset \Omega$. Using these facts, and fixing $\nu' \in \lie{h}^*$,
compute using Theorem \ref{Tprv3}:
\begin{eqnarray*}
\h^\Pi(\two{z}; \lambda,\nu) & = & p(\{ \nu(h_i) - \lambda(h_i) \}) =
p(\{ (\nu - \lambda)(h_i) \}) = (\nu - \lambda)(\beta^{w_\circ
\Pi}(z)) \nu'(1)\\
& = & \h^{w_\circ \Pi}(\overline{z}; \nu', \nu - \lambda) =
\h^\Pi(\overline{z}; w_\circ * \nu', w_\circ(\nu - \lambda)) =
\chi(w_\circ(\nu - \lambda))(z),
\end{eqnarray*}

\noindent since $w_\circ = w_\circ^{-1}$. By Theorem \ref{Thc}, twist
this weight by $w_\circ$; thus for all $(z, \lambda, \nu) \in Z(\lie{Ug})
\times \lie{h}^* \times \Lambda^+$,
\[ \h^\Pi(\two{z}; \lambda, \nu) = \chi(w_\circ * w_\circ(\nu -
\lambda))(z) = \chi(\nu - \lambda - 2 \rho)(z). \]

\noindent Now fix any weight $\lambda$ and any central $z$, and consider
the map $h_{z, \lambda} : \lie{h}^* \to \C$, given by:
\[ h_{z, \lambda}(\nu) := \h^\Pi(\two{z}; \lambda, \nu) - \chi(\nu -
\lambda - 2 \rho)(z). \]

\noindent It is clear that $h_{z,\lambda}$ is a polynomial map, which
vanishes if $\nu$ lies in the Zariski dense subset $\Lambda^+ \subset
\lie{h}^*$. But then $h_{z,\lambda} \equiv 0$ as polynomials. (This is
the aforementioned Zariski density argument; it is analogous to saying
that if a polynomial $p(T_1, \dots, T_n) : \C^n \to \C$ is identically
zero on $z_0 + \Z_{\geq 0}^n$ for some $z_0 \in \C^n$, then $p \equiv
0$.) We conclude that for all $\lambda,\nu \in \lie{h}^*$ and $z \in
Z(\lie{Ug})$, $\h^\Pi(z; \lambda, \nu) = \chi(\lambda, \nu - \lambda - 2
\rho)(z)$. It is also easy to check that the following holds:
\begin{equation}\label{Ehc}
w * (\nu - \lambda - 2 \rho) = w \nu - w*\lambda - 2 \rho\ \forall w \in
W, \nu, \lambda \in \lie{h}^*.
\end{equation}

\noindent Now given any $w \in W$, compute using Theorems \ref{Thc} and
\ref{Tprv3}:
\begin{eqnarray*}
\h^{w^{-1} \Pi}(z; \lambda, \nu) & = & \h^\Pi(z; w * \lambda, w \nu) =
\chi(w * \lambda, w \nu - w * \lambda - 2 \rho)(z)\\
& = & \chi(w * \lambda, w * (\nu - \lambda - 2 \rho))(z) = \chi(\lambda,
\nu - \lambda - 2 \rho)(z).
\end{eqnarray*}

\noindent This proves the main assertion of the theorem; the rest follow
easily. For instance, if $w \nu \in \Lambda^+$, then considering the
action of any central $z$ on the minimal type (and using Theorem
\ref{Tprv3}),
\[ \chi_{\pihat_{\lambda,\nu}}(z) = \eta_{\pihat_{\lambda,\nu},
\overline{\nu}}(z) = \h^{w^{-1} \Pi}(z; \lambda, \nu) = \chi(\lambda, \nu
- \lambda - 2 \rho)(z). \]
\end{proof}

\begin{remark}
A similar result follows from the definitions: given $z \in Z(\lie{Ug})$,
$\lambda, \nu \in \lie{h}^*$, and $w \in W$, $\h^{w^{-1}
\Pi}(\overline{z}; \lambda,\nu) = \chi(w(\nu))(z)$. Note that this was
shown in \cite{PRV2} only when $w(\nu) \in \Lambda^+$. Now since the
action of any $w \in W$ on $\lie{h}^*$ is a linear - hence, polynomial -
map, once again a Zariski density argument can be used to extend the
result to all $\nu \in \lie{h}^*$, for any fixed $z \in Z(\lie{Ug})$.
\end{remark}

In the above result, observe that the above recipe for the central
character is ``$W$-equivariant", in that the last equation in the
statement of the theorem (which is basically Equation \eqref{Ehc}) holds.
One can check that this is not always so: in other words, if we use
$\chi(\lambda, w * (\nu - \lambda - 2 \rho))$ to denote the central
character (via Theorem \ref{Thc}) for arbitrary $w \neq 1 \in W$ - such
as $w = w_\circ$, say. However, one can check that $W$-equivariance does
hold if $w$ is central in $W$.


\subsection{Remarks}

We conclude this section with some remarks. Note that the presentation in
\cite{PRV2} of the material in this section is motivated by the approach
of Harish-Chandra. Thus, it differs somewhat from the presentation in
this article.

More precisely, the philosophy in \cite{PRV2} (as per the historical
account given in \cite{Var}) was to use Harish-Chandra's {\it density
theorem}, which roughly says that among all irreducible $G$-modules
containing a given irreducible finite-dimensional $\lie{k}$-module
$\cald$, the ones that are finite-dimensional form a Zariski dense set.
What this means is that the homomorphisms $\h^{\Pi'}(-; \lambda, \nu)$
correspond to finite-dimensional representations at special lattice
points, as in Corollary \ref{Cprv} - and the set of these lattice points
is Zariski dense in $\lie{h}^* \times \lie{h}^*$. If we replace these by
more general points, then the homomorphisms in question correspond to
infinite-dimensional representations.

In view of this perspective, the approach in \cite{PRV2} was to
``alternate" between the representations and the homomorphisms. Here is
their strategy:
\begin{itemize}
\item Identify and study the minimal type in all finite-dimensional
simple $\ghat$-modules $V(\lambda,\mu)$.

\item Explicitly compute the polynomials $\h^{\Pi'}$ for these modules,
and prove that $\eta_{V(\lambda,\mu),\overline{\nu}}(-) \equiv \h^{w^{-1}
\Pi}(-; \lambda, \nu)$, where $\nu = \lambda + w_\circ \mu$ and
$\overline{\nu} = w(\nu) \in \Lambda^+$.

\item Motivated by this, claim that there are simple modules
$\pihat_{\lambda,\nu}$ in $\scrc(\lie{g} \times \lie{g},
\overline{\lie{g}})$ for all $\lambda \in \lie{h}^*$ and $\nu \in
\Lambda$, with minimal type $\overline{\nu}$; now construct these.

\item Finally, prove that $\eta_{\pihat_{\lambda,\nu},
\overline{\nu}}(-)$ indeed equals $\h^{w^{-1} \Pi}(-; \lambda, \nu)$ for
these modules.
\end{itemize}

\section{The rings $\scrr_{\nu, \Pi'}$ and the PRV determinants}

The next object of study in \cite{PRV2} is the image $\scrr_{\nu, w^{-1}
\Pi}$ of the map $\h^{w^{-1} \Pi}(-; -, \nu) : \Omega \to \Sym \lie{h}$,
where $w(\nu) \in \Lambda^+$. This was done in detail in \cite[Section
3]{PRV2} using deep results of Kostant \cite{Kos2}, such as his
separation of variables theorem \eqref{Ekostant}. These results lead to
the definition and study of the so-called PRV determinants, which we will
discuss below.

\subsection{The rings $\scrr_{\nu, \Pi'}$}

To state the next result, we need some notation. Recall the stabilizer
subgroup $W_\nu$ of a weight $\nu$, as well as the twisted action of the
Weyl group $*$ on $\lie{h}^*$, which transfers to $\lie{h}$ and then
extends to $\Sym \lie{h} = P(\lie{h}^*)$.

\begin{theorem}[\cite{PRV2}]\label{Trnu}
For all $\nu \in \Lambda$ and $w \in W$ such that $w(\nu) \in \Lambda^+$,
$\scrr_{\nu, w^{-1} \Pi} \subset \I_\nu$, where
\[ \I_\nu := \{ p \in P(\lie{h}^*) : w * p = p\ \forall w \in W_\nu \}.
\]

\noindent If $\nu = 0$ or $\nu(h_i) > 0$ for all $i \in I$, then
$\scrr_{\nu, \Pi} = \I_\nu$. For general $\nu \in \Lambda^+$, define
\[ (\lie{Ug})_0(\nu) := \{ a \in (\lie{Ug})_0 : (\ad e_i)^{\nu(h_i) +
1}(a) = 0\ \forall i \in I \}. \]

\noindent Then $\scrr_{\nu, \Pi} = w_\circ * \beta^{\Pi}((\lie{Ug})_0(-
w_\circ \nu))$, and $\scrr_{w \nu, w \Pi} = w * \scrr_{\nu, \Pi}$ for all
$w \in W$.
\end{theorem}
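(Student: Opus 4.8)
The plan is to prove the four assertions in the order (1) the $W$-equivariance $\scrr_{w\nu,w\Pi}=w*\scrr_{\nu,\Pi}$; (2) the inclusion $\scrr_{\nu,w^{-1}\Pi}\subset\I_\nu$; (3) the explicit formula $\scrr_{\nu,\Pi}=w_\circ*\beta^\Pi((\lie{Ug})_0(-w_\circ\nu))$; and (4) the two equalities, which I would deduce from (3). For (1) I would specialise the $W$-equivariance of the maps $\h^{w\Pi}$ in Theorem~\ref{Tprv3}(2) to base $\Pi$: this gives $\h^{w\Pi}(\omega;w*\lambda,w\nu)=\h^\Pi(\omega;\lambda,\nu)$ for every $\omega\in\Omega$, so that, writing $\mu=w*\lambda$, the polynomial $\h^{w\Pi}(\omega;-,w\nu)$ on $\lie{h}^*$ is the $w$-twist (under $*$) of $\h^\Pi(\omega;-,\nu)$; letting $\omega$ range over $\Omega$ yields $\scrr_{w\nu,w\Pi}=w*\scrr_{\nu,\Pi}$. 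Combined with the elementary identity $w*\I_\nu=\I_{w\nu}$ (immediate from $W_{w\nu}=wW_\nu w^{-1}$ and the fact that $*$ is a group action), (1) reduces the inclusion (2) to the case $\nu\in\Lambda^+$, $w=1$.

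For (2) with $\nu\in\Lambda^+$ I would fix $\omega\in\Omega$ and set $p:=\h^\Pi(\omega;-,\nu)$. If $s_i$ is a simple reflection with $\nu(h_i)=0$, then Corollary~\ref{Cequiv} gives $\pihat_{\lambda,\nu}\cong\pihat_{s_i*\lambda,s_i\nu}=\pihat_{s_i*\lambda,\nu}$. The scalar $\eta_{\pihat_{\lambda,\nu},\overline{\nu}}(\omega)$ by which $\omega$ acts on the (multiplicity-one) minimal type is an isomorphism invariant of the module, hence unchanged under $\lambda\mapsto s_i*\lambda$; by Theorem~\ref{Tprv3}(1) this reads $p(\lambda)=p(s_i*\lambda)$ for all $\lambda\in\lie{h}^*$, i.e.\ $s_i*p=p$. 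Since $W_\nu$ is generated by such simple reflections, $p\in\I_\nu$, proving (2).

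Assertion (3) is the technical heart. Unwinding the definitions, $\scrr_{\nu,\Pi}$ is the image of $\omega\mapsto\sum_{\n\ge{\bf 0}}\nu(\beta^\Pi(\xi_\n))\,\h^{\n}$ on $\Omega$, where $\omega\equiv\sum_\n\overline{\xi_\n}\otimes\one{\h}^{\n}$ modulo $(\lie{U}\ghat)\qhat$ with $\xi_\n\in(\lie{Ug})_0$. By Theorems~\ref{Tprv3}(1) and~\ref{Tprv4}, together with a Zariski density argument of the kind used in Section~\ref{Schar}, it is enough to analyse the values at the lattice points $(\lambda,\nu)$ with $\lambda\in\nu+\Lambda^+$, where $\h^\Pi(\omega;\lambda,\nu)$ equals the scalar by which $\omega$ acts on the PRV component $\vg(\overline{\nu})\subset\vg(\lambda)\otimes\vg(\mu)$ with $\mu:=-w_\circ(\lambda-\nu)\in\Lambda^+$. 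That component is generated by an explicit highest weight vector assembled from the highest weight vector of $\vg(\lambda)$ and the \emph{lowest} weight vector of $\vg(\mu)$ --- the $\lie{sl}_2(\C)$ model being Section~\ref{Sexample}, and the general construction the one underlying Theorem~\ref{Tprv1} (its $V^-$-description). I would then feed in Kostant's separation of variables~\eqref{Ekostant}, $\lie{Ug}\cong\bbh(\lie{g})\otimes Z(\lie{Ug})$, which gives $(\lie{Ug})_0=\bbh(\lie{g})_0\cdot Z(\lie{Ug})$ and, since $Z(\lie{Ug})$ is pointwise $\ad$-invariant, an analogous factorisation $(\lie{Ug})_0(-w_\circ\nu)=\bbh(\lie{g})_0(-w_\circ\nu)\cdot Z(\lie{Ug})$; transporting the action of $\omega$ on the PRV generator into the adjoint action of $\lie{Ug}$ on itself and comparing with $\beta^\Pi$ should identify the image with $w_\circ*\beta^\Pi((\lie{Ug})_0(-w_\circ\nu))$. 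The outer twist $w_\circ$ and the weight $-w_\circ\nu$ both enter through the substitution $\mu=-w_\circ(\lambda-\nu)$, and the exponent $(-w_\circ\nu)(h_i)+1$ cutting out $(\lie{Ug})_0(-w_\circ\nu)$ is precisely the length of the $\alpha_i$-string through the extremal weight of $\vg(\mu)$ that produces the PRV generator.

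Finally (4). For $\nu=0$ the condition reads $(\ad e_i)(a)=0$ for all $i$, so $(\lie{Ug})_0(0)$ consists of weight-zero $\ad\lie{n}^+$-highest weight vectors; these generate trivial $\ad\lie{g}$-submodules, whence $(\lie{Ug})_0(0)=Z(\lie{Ug})$, and by Theorem~\ref{Thc} $\beta^\Pi(Z(\lie{Ug}))=(\Sym\lie{h})^{(W,*)}$, which is $w_\circ*$-stable; since $W_0=W$ this gives $\scrr_{0,\Pi}=(\Sym\lie{h})^{(W,*)}=\I_0$. If $\nu$ is regular dominant, then $\kappa:=-w_\circ\nu$ is regular dominant, $W_\nu=\{1\}$, $\I_\nu=\Sym\lie{h}$, and using $w_\circ*\I_\nu=\I_{w_\circ\nu}=\Sym\lie{h}$ the claim reduces via (3) to the surjectivity of $\beta^\Pi$ on $(\lie{Ug})_0(\kappa)$; regularity of $\kappa$ puts $\lie{h}$ into $(\lie{Ug})_0(\kappa)$ (since $(\ad e_i)^2$ annihilates $\lie{h}$ and $\kappa(h_i)\ge1$), and Kostant's harmonics make $\beta^\Pi((\lie{Ug})_0(\kappa))=\beta^\Pi(\bbh(\lie{g})_0(\kappa))\cdot(\Sym\lie{h})^{(W,*)}$ all of $\Sym\lie{h}$. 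The main obstacle is step (3): pinning down exactly which elements of $(\lie{Ug})_0$, with which string condition and which outer Weyl twist, arise from the $\Omega$-action on the PRV component, for which Kostant's separation of variables and a careful treatment of the PRV highest weight vector are indispensable; the surjectivity needed in the regular case is a secondary point, sensitive to the degree-versus-$\kappa(h_i)$ bookkeeping in $(\ad e_i)^{\kappa(h_i)+1}$.
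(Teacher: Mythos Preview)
The paper does not actually prove Theorem~\ref{Trnu}; it only states the result as coming from \cite[Section~3]{PRV2} and remarks that ``the proofs of these results are carefully developed in \cite[Section~3]{PRV2}, via many intermediate lemmas'' which ``heavily use results developed by Kostant in \cite{Kos2}''. So there is no proof in the paper to compare against beyond this hint.

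Your outline is broadly consistent with that hint, and parts (1) and (2) are correct as written: (1) is an immediate specialisation of Theorem~\ref{Tprv3}(2), and (2) follows from Corollary~\ref{Cequiv} together with the fact that for $\nu\in\Lambda^+$ the stabiliser $W_\nu$ is generated by the simple reflections $s_i$ with $\nu(h_i)=0$ (which you use implicitly and should state). Your reduction of the general inclusion to the dominant case via (1) and $w*\I_\nu=\I_{w\nu}$ is clean.

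Part (3) is, as you say, the heart of the matter, and your sketch is only a plausibility argument: you have not actually exhibited the bijection between the image of $\h^\Pi(-;-,\nu)$ on $\Omega$ and $w_\circ*\beta^\Pi((\lie{Ug})_0(-w_\circ\nu))$. In \cite{PRV2} this identification goes through a chain of intermediate lemmas relating $\Omega$, the symmetrization map $\bfl$, and Kostant's harmonics; your paragraph gestures at the right ingredients but does not supply the mechanism. Part (4) also has a real gap in the regular case: you observe that $\lie{h}\subset(\lie{Ug})_0(\kappa)$ and that the image is a $(\Sym\lie{h})^{(W,*)}$-module, but $(\lie{Ug})_0(\kappa)$ is not a subring, so neither is its image under $\beta^\Pi$ a priori, and containing $\lie{h}$ together with the invariants does not by itself force the image to be all of $\Sym\lie{h}$ (Chevalley's theorem says $\Sym\lie{h}$ is free of rank $|W|$ over the invariants, so one needs enough module generators, not ring generators). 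The surjectivity here genuinely requires the Kostant machinery you allude to, not just the degree bookkeeping.
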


\noindent Note that if $\nu(h_i) > 0$ for all $i$ (i.e., $\nu$ is
regular), then $\I_\nu = P(\lie{h}^*)$. Moreover, $\I_0 =
P(\lie{h}^*)^{(W,*)}$.

\begin{remark}
The authors claim in \cite{PRV2} that they have proved that $\scrr_{\nu,
\Pi} = \I_\nu$ for all $\nu \in \Lambda^+$, using a case-by-case
analysis.
\end{remark}

We also remark that Theorem \ref{Trnu} has the following consequence for
$\nu = 0$ or regular $\nu$.

\begin{cor}[\cite{PRV2}]\label{Czero}
Suppose $\nu(h_i) > 0$ for all $i$, and $\lambda, \lambda' \in
\lie{h}^*$. Then,
\[ \pihat_{\lambda,0} \cong \pihat_{\lambda',0} \Longleftrightarrow
\lambda' \in W * \lambda, \]

\noindent whereas the representations $\pihat_{\lambda,\nu}$ are
inequivalent for all $\lambda$.
\end{cor}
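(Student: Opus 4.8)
The plan is to translate the question of equivalence of the modules $\pihat_{\lambda,\nu}$ into a statement about the polynomial ring $\scrr_{\nu,\Pi}$, and then read off the answer from Theorem~\ref{Trnu}. In both cases $\nu$ is dominant integral — it is either $0$ or dominant regular — so $\overline{\nu}=\nu$, the minimal type of $\pihat_{\lambda,\nu}$ is $\vg(\overline{\nu})$ occurring with multiplicity one, and Theorem~\ref{Tprv3}(1), taken with $w=1$, identifies the associated key homomorphism as $\eta_{\pihat_{\lambda,\nu},\overline{\nu}}(-)\equiv\h^{\Pi}(-;\lambda,\nu):\Omega\to\C$. Since $\pihat_{\lambda,\nu}$ and $\pihat_{\lambda',\nu}$ (same $\nu$) both contain $\vg(\overline{\nu})$ with this unit multiplicity, Equation~\eqref{Ehar} applied with $\cald=\vg(\overline{\nu})$ — where, the multiplicity spaces being one-dimensional, equivalence of the $\Omega$-representations just means equality of the corresponding characters $\Omega\to\C$ — gives that $\pihat_{\lambda,\nu}\cong\pihat_{\lambda',\nu}$ if and only if $\h^{\Pi}(\omega;\lambda,\nu)=\h^{\Pi}(\omega;\lambda',\nu)$ for every $\omega\in\Omega$. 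Writing $p_\omega:=\h^{\Pi}(\omega;-,\nu)\in P(\lie{h}^*)$, this is exactly the condition that $p(\lambda)=p(\lambda')$ for all $p$ in the ring $\scrr_{\nu,\Pi}=\{p_\omega:\omega\in\Omega\}$.

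Next I would feed in Theorem~\ref{Trnu}. When $\nu$ is dominant regular it gives $\scrr_{\nu,\Pi}=\I_\nu=P(\lie{h}^*)$; since polynomials separate the points of $\lie{h}^*$, the condition $p(\lambda)=p(\lambda')$ for all $p\in P(\lie{h}^*)$ forces $\lambda=\lambda'$. Hence $\lambda\mapsto\pihat_{\lambda,\nu}$ is injective, which is the second assertion of the corollary. When $\nu=0$, Theorem~\ref{Trnu} gives $\scrr_{0,\Pi}=\I_0=P(\lie{h}^*)^{(W,*)}$, so $\pihat_{\lambda,0}\cong\pihat_{\lambda',0}$ if and only if no $*$-invariant polynomial separates $\lambda$ from $\lambda'$. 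The direction $\lambda'\in W*\lambda\Rightarrow\pihat_{\lambda,0}\cong\pihat_{\lambda',0}$ is then immediate — and is in any case already contained in Corollary~\ref{Cequiv}, since $w\cdot 0=0$. Conversely, if $\lambda'\notin W*\lambda$, then $W*\lambda$ and $W*\lambda'$ are disjoint finite subsets of $\lie{h}^*$; by interpolation choose $f\in P(\lie{h}^*)$ that vanishes on $W*\lambda$ and equals $1$ on $W*\lambda'$, and set $\bar f:=\frac{1}{|W|}\sum_{w\in W}w*f$. Over $\C$, $\bar f$ is $*$-invariant, still vanishes on $W*\lambda$, and still equals $1$ on $W*\lambda'$, so $\bar f(\lambda)\neq\bar f(\lambda')$; as $\bar f\in\scrr_{0,\Pi}$ this shows $\pihat_{\lambda,0}\not\cong\pihat_{\lambda',0}$, completing the equivalence.

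I expect the only genuinely non-formal ingredient to be this last orbit-separation point — namely, that in characteristic zero the invariant ring of a finite group acting on an affine space separates orbits — which here is applied to the twisted $W$-action after conjugating it by the translation $\mu\mapsto\mu+\rho$ into the ordinary linear $W$-action. Everything else is a matter of chaining together Theorem~\ref{Tprv3}, Equation~\eqref{Ehar}, and Theorem~\ref{Trnu}.
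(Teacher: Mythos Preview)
Your argument is correct and is exactly the deduction the paper has in mind: it states Corollary~\ref{Czero} as a direct consequence of Theorem~\ref{Trnu} (using the identifications $\I_\nu=P(\lie{h}^*)$ for regular $\nu$ and $\I_0=P(\lie{h}^*)^{(W,*)}$ noted immediately after that theorem), with Equation~\eqref{Ehar} and Theorem~\ref{Tprv3} supplying the bridge between equivalence of the $\pihat_{\lambda,\nu}$ and equality of evaluations on $\scrr_{\nu,\Pi}$. Your orbit-separation step for $\nu=0$ is the standard complement needed to close the equivalence, and the paper does not supply any further detail beyond what you have written.
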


\noindent The proofs of these results are carefully developed in
\cite[Section 3]{PRV2}, via many intermediate lemmas. These lemmas
heavily use results developed by Kostant in \cite{Kos2}, which deal with
the symmetrization map and with finite-dimensional $\lie{g}$-submodules
of $\Sym \lie{g}$ and $\lie{Ug}$. In the next part, we discuss some of
these results, and show how they can be used to define and study the
``PRV determinants".

We end this part by discussing the case of $\lie{sl}_2$, where we
classify all of the modules $\pihat_{\lambda,\nu}$.

\begin{ex}
Suppose $\lie{g} = \lie{sl}_2(\C)$. Using the results of Section
\ref{Sexample}, compute with $\nu \in \Z_{\geq 0}$:
\[ \h^\Pi(\Delta_1; -, \nu) = h_1^2 + 2 h_1, \quad \h^\Pi(\Delta_2; -,
\nu) = \nu^2 - 2 \nu - 2 \nu h_1 + h_1^2 + 2 h_1, \quad
\h^\Pi(\overline{\Delta}; -, \nu) = \nu^2 + 2 \nu. \]

\noindent It is now clear that if $\nu = 0$, then these generate
$\C[h_1^2 + 2 h_1] = \C[h_1]^{(W,*)}$. The above result says that
$\h^\Pi(\omega; -, \nu)$ is an element of this ring for all $\omega \in
\Omega$. Similarly, if $\nu > 0$ (abusing notation), then the above
polynomials already generate all of $\C[h_1]$ as desired.

Also note that we can classify (all equivalences between) the
representations $\pihat_{\lambda,\nu}$: given $(\lambda, \nu) \neq
(\lambda', \nu')$ in $\lie{h}^* \times \Lambda$, we claim:
\begin{equation}\label{Esimples}
\pihat_{\lambda,\nu} \cong \pihat_{\lambda',\nu'} \Longleftrightarrow
(\lambda',\nu') = (-\lambda-2, -\nu).
\end{equation}

\noindent The backward implication follows from Corollary \ref{Cequiv},
and conversely, $\nu' = \pm \nu$. Then the calculations above imply that
$\h^\Pi(\Delta_1; \lambda, \nu) = \h^\Pi(\Delta_1; \lambda', \nu')$,
whence $\lambda' = \lambda, - \lambda - 2$. This shows the claim when
$\nu' = \pm \nu = 0$. Otherwise we may assume that $\nu' = \nu > 0$
(using the backward implication). Now evaluate $\h^\Pi(\overline{\Delta}
+ \Delta_1 - \Delta_2; -, \nu)$ at $\lambda, \lambda'$. Then $\nu \neq 0
\implies \lambda = \lambda'$. \qed
\end{ex}

\noindent These calculations naturally lead to the question of
classifying the representations $\pihat_{\lambda,\nu}$ for general
semisimple $\lie{g}$ - and more generally, the classification of all
simple objects of $\scrc(\lie{g} \times \lie{g}, \overline{\lie{g}})$.
(Recall Corollary \ref{Cequiv}.) These questions will be discussed in the
concluding section of this article.

\subsection{Kostant's separation of variables}

In order to discuss PRV determinants, we need some preliminaries. Recall
the symmetrization map $\bfl$ from \cite{Har2}, which is the unique
linear isomorphism $: \Sym \lie{g} \twoheadrightarrow \lie{Ug}$
satisfying: $\bfl(1) = 1$ and for all $r>0$ and $X_1, \dots, X_r \in
\lie{g}$,
\[ \bfl(X_1 \dots X_r) = \frac{1}{r!} \sum_{\sigma \in S_r} X_{\sigma(1)}
\dots X_{\sigma(r)}. \]

\noindent Also recall that the adjoint action of $\lie{g}$ on itself can
be uniquely extended to derivations $\Phi : \lie{g} \to \Sym \lie{g}$ and
$\Theta : \lie{g} \to \lie{Ug}$. Thus, both of these algebras are
$\lie{g}$-modules, and $\bfl$ is a $\lie{g}$-module isomorphism: $\bfl
\circ \Phi(X) = \Theta(X) \circ \bfl$ for all $X \in \lie{g}$. Moreover,
the centers are isomorphic via $\bfl$. Namely, $\bfl : (\Sym
\lie{g})^{G_0} \twoheadrightarrow Z(\lie{Ug}) = (\lie{Ug})^{G_0}$, where
$G_0$ is the adjoint group of $\lie{g}$.

Now given $\mu \in \Lambda^+$, define the following copies of the
$\lie{g}$-module $V(\mu)$ in $\Sym \lie{g}$ and $\lie{Ug}$:
\[ \scrl^\mu := \Hom_{\lie{g}}(V(\mu), \Sym \lie{g}), \qquad \scrlug{\mu}
:= \Hom_{\lie{g}}(V(\mu), \lie{Ug}) = \{ \bfl \circ M : M \in \scrl^\mu
\}. \]

\noindent In \cite{Kos2}, Kostant showed that these are both free modules
of rank $d_\mu := \dim V(\mu)_0$, over $(\Sym \lie{g})^{G_0}$ and
$Z(\lie{Ug})$ respectively. This is related to Kostant's ``separation of
variables" theorem; see Equation \eqref{Ekostant}, where $\bbh(\lie{g})$
is precisely the image of the harmonic polynomials in $\Sym \lie{g}$
under $\bfl$. (Recall from above that $[\bbh(\lie{g}) : V(\mu)] = \dim
V(\mu)_0 = d_\mu$.)
Moreover, it is possible to choose all of the basis elements $M_i$ (over
the center) for $\scrl^\mu$ such that every $M_i(V(\mu))$ is a subspace
of $\Sym^{q_i(\mu)} \lie{h}$ for some $q_i(\mu) \in \Z_{\geq 0}$. Such
elements are called {\it homogeneous}. The image of a set of homogeneous
generators under $\bfl$ yields a set of $Z(\lie{Ug})$-module generators
for $\scrl^\mu(\lie{Ug})$.

The zero weight spaces in these modules $\scrl^\mu, \scrlug{\mu}$ are of
great interest in \cite{PRV2}. For instance, it is easy to check that for
all $\nu \in \Lambda^+$,
\[ (\lie{Ug})_0(\nu) = \sum_{\mu \in \Lambda^+} \sum_{L \in \scrlug{\mu}}
L(V^+(\mu; 0, \nu)), \]

\noindent where $V^+(\mu; 0, \nu)$ was defined above Theorem \ref{Tprv1}.
(These spaces were used in Theorem \ref{Trnu}.)

\subsection{PRV determinants}

We finally define the {\it PRV determinants}. (This material is taken
from \cite[Sections 3,4]{PRV2}.) Fix $\mu \in \Lambda^+$, and choose a
set $\{ M_1, \dots, M_{d_\mu} \}$ of homogeneous generators for
$\scrl^\mu$. Also choose a basis $\{ v_1, \dots, v_{d_\mu} \}$ of
$V(\mu)_0$. Now define the {\it PRV matrix} as:
\[ \bfk'_\mu := ((\ \beta^\Pi(\bfl(M_i) v_j)\ ))_{1 \leq i, j \leq d_\mu}
\ \in \lie{gl}_{d_\mu}(\Sym \lie{h}), \qquad \bfk'_\mu(\nu) = (( \
\nu((\bfk'_\mu)_{ij})\ )) \in \lie{gl}_{d_\mu}(\C). \]

One of the important results in \cite{PRV2} that has also influenced much
future research is the computation of the determinants of these and
related matrices. In the following preliminary result, recall by Theorem
\ref{Tprv1} that $\dim V^+(\mu; 0, -w_\circ \nu) = m^\mu_{-w_\circ \nu,
\nu} = [\vg(\nu) \otimes \vg(\nu)^* : \vg(\mu)]$.

\begin{prop}[\cite{PRV2}]
For all $\mu, \nu \in \Lambda^+$, ${\rm rank}\ \bfk'_\mu(\nu) = \dim
V^+(\mu; 0, -w_\circ \nu)$. Moreover,
\[ d_\mu > 0, \lambda \in \lie{h}^* \implies [\pihat_{\lambda,0} :
\vg(\mu)] \leq \min( {\rm rank}\ \bfk'_{- w_\circ \mu}(\lambda), {\rm
rank}\ \bfk'_\mu(w_\circ * \lambda) ). \]
\end{prop}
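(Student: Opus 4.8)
I would prove the two assertions separately.

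\emph{The rank identity.} Assume $d_\mu>0$ (otherwise both sides are $0$), identify $\overline{\lie{g}}\cong\lie{g}$, and equip each $V(\kappa)$ with its contravariant (Shapovalov) form $\langle-,-\rangle_\kappa$, which is nondegenerate, orthogonal across distinct weight spaces, and normalized with $\langle v^+_\kappa,v^+_\kappa\rangle_\kappa=1$ on the highest weight line. Since $\bfl(M_i)\colon V(\mu)\to\lie{Ug}$ is a nonzero $\lie{g}$-module map for the adjoint action and $v_j\in V(\mu)_0$, the element $\bfl(M_i)(v_j)$ lies in $(\lie{Ug})_0$, so by the defining property of $\beta^\Pi$ it acts on $v^+_\nu$ by the scalar $\nu(\beta^\Pi(\bfl(M_i)(v_j)))=(\bfk'_\mu(\nu))_{ij}$. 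Hence, writing $\phi_i\colon V(\mu)\to\End V(\nu)$ for the $\lie{g}$-module map obtained by composing $\bfl(M_i)$ with the representation $\lie{Ug}\to\End V(\nu)$ (which intertwines the adjoint action with the commutator), one gets $(\bfk'_\mu(\nu))_{ij}=\langle v^+_\nu,\phi_i(v_j)v^+_\nu\rangle_\nu$. By Kostant's separation of variables \eqref{Ekostant}, $\scrlug{\mu}$ is free of rank $d_\mu$ over $Z(\lie{Ug})$ with basis $\{\bfl(M_i)\}$; since $V(\nu)$ is finite dimensional simple, $\lie{Ug}\to\End V(\nu)$ is onto (Burnside) with $Z(\lie{Ug})$ acting through $\chi(\nu)$, and since every locally finite $\lie{g}$-module is semisimple, applying $\Hom_\lie{g}(V(\mu),-)$ to this surjection keeps it surjective; therefore $\{\phi_1,\dots,\phi_{d_\mu}\}$ spans $\Hom_\lie{g}(V(\mu),\End V(\nu))$.

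Next I would transport to highest weight vectors: via the adjunction $\Hom_\lie{g}(V(\mu),\End V(\nu))\cong\Hom_\lie{g}(V(\nu)\otimes V(\mu),V(\nu))$ (sending $\phi$ to $\tilde\phi$, $\tilde\phi(x\otimes v)=\phi(v)x$, with $V(\nu)$ placed first by cocommutativity), then the contravariant-form adjoint $\tilde\phi\mapsto\tilde\phi^{\,\dagger}\in\Hom_\lie{g}(V(\nu),V(\nu)\otimes V(\mu))$, then evaluation at $v^+_\nu$, the vectors $\omega_i:=\tilde\phi_i^{\,\dagger}(v^+_\nu)$ span $(V(\nu)\otimes V(\mu))^{\lie{n}^+}_\nu$. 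Writing $v^+_\nu\otimes w_i$ (with $w_i\in V(\mu)_0$) for the component of $\omega_i$ in $V(\nu)_\nu\otimes V(\mu)_0$, orthogonality of the product form across bi-weights gives $(\bfk'_\mu(\nu))_{ij}=\langle v^+_\nu,\tilde\phi_i(v^+_\nu\otimes v_j)\rangle_\nu=\langle\omega_i,v^+_\nu\otimes v_j\rangle=\langle w_i,v_j\rangle_\mu$; since $\{v_j\}$ is a basis of $V(\mu)_0$ and $\langle-,-\rangle_\mu$ is nondegenerate there, $\operatorname{rank}\bfk'_\mu(\nu)=\dim\operatorname{span}_\C\{w_i\}=\dim p_\mu\bigl((V(\nu)\otimes V(\mu))^{\lie{n}^+}_\nu\bigr)$, where $p_\mu$ extracts the $V(\mu)$-part of the $V(\nu)_\nu$-component. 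The only point needing care is that $p_\mu$ is injective: given a nonzero highest weight vector $\omega$ of weight $\nu$, let $\eta_0$ be a maximal weight of $V(\nu)$ whose component $\omega_{\eta_0}\in V(\nu)_{\eta_0}\otimes V(\mu)_{\nu-\eta_0}$ is nonzero; the $(\eta_0+\alpha_i,\nu-\eta_0)$-part of $e_i\omega$ receives a contribution only from $(e_i\otimes 1)\omega_{\eta_0}$ (maximality of $\eta_0$ suppresses the other), so $e_i\omega=0$ forces $(e_i\otimes 1)\omega_{\eta_0}=0$ for all $i$, whence $\eta_0=\nu$ as $V(\nu)^{\lie{n}^+}=V(\nu)_\nu$; thus $p_\mu(\omega)\ne 0$. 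Consequently $\operatorname{rank}\bfk'_\mu(\nu)=\dim(V(\nu)\otimes V(\mu))^{\lie{n}^+}_\nu=[\vg(\nu)\otimes\vg(\mu):\vg(\nu)]=[\vg(\nu)\otimes\vg(\nu)^*:\vg(\mu)]$ (by Frobenius reciprocity and $\End\vg(\nu)\cong\vg(\nu)\otimes\vg(\nu)^*$), which equals $\dim V^+(\mu;0,-w_\circ\nu)$ by Theorem \ref{Tprv1}.

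\emph{The inequality.} Recall $\pihat_{\lambda,0}=\lie{H}'_\xi(0)/\lie{H}''_\xi(0)$ is a subquotient of $\pi_{\xi,0}$ (with $\xi=2(\lambda+\rho)$) and $\lie{H}'_\xi(0)$ is the closure of $\lie{U}(\lie{g}\times\lie{g})v_0$, where $v_0$ spans the unique trivial-type copy; since passing to the closure does not affect finite dimensional isotypic components, $[\pihat_{\lambda,0}:\vg(\mu)]\le[\lie{H}'_\xi(0):\vg(\mu)]=\dim\bigl(\lie{U}(\lie{g}\times\lie{g})v_0\cap(\pi_{\xi,0})^{\overline{\lie{n}}^+}_\mu\bigr)$, while by Theorem \ref{Thar}(1) the ambient space $(\pi_{\xi,0})^{\overline{\lie{n}}^+}_\mu$ has dimension $d_\mu$. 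I would then construct an endomorphism $\Theta_\lambda$ of this $d_\mu$-dimensional space — built from $v_0$ being $\overline{\lie{g}}$-invariant, the decomposition $\lie{g}\times\lie{g}=\overline{\lie{g}}\oplus\one{\lie{h}}\oplus\qhat$, Harish-Chandra's explicit formulas for the $\one{\lie{h}}$- and $\qhat$-actions on $\pi_{\xi,0}$ (into which $\lambda$ enters via $\xi$), and Kostant's description of the harmonic $V(\mu)^*$-isotypic copies in $\lie{Ug}$ — with the property that $\lie{U}(\lie{g}\times\lie{g})v_0\cap(\pi_{\xi,0})^{\overline{\lie{n}}^+}_\mu=\operatorname{image}\Theta_\lambda$ and, in the bases used to define $\bfk'$, the matrix of $\Theta_\lambda$ is exactly $\bfk'_{-w_\circ\mu}(\lambda)$ — its entries being computed as in the first part with $V(-w_\circ\mu)=V(\mu)^*$ in the role of $V(\mu)$. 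This yields $[\pihat_{\lambda,0}:\vg(\mu)]\le\operatorname{rank}\bfk'_{-w_\circ\mu}(\lambda)$. For the second bound, Corollary \ref{Cequiv} gives $\pihat_{\lambda,0}\cong\pihat_{w_\circ*\lambda,0}$, so the same estimate applied at $w_\circ*\lambda$ reads $[\pihat_{\lambda,0}:\vg(\mu)]\le\operatorname{rank}\bfk'_{-w_\circ\mu}(w_\circ*\lambda)$; combined with the fact that $\bfk'_{-w_\circ\mu}$ and the transpose of $\bfk'_\mu$ differ by multiplication on each side by matrices over $\Sym\lie{h}$ invertible at every point — a consequence of the self-duality of $\Sym\lie{g}$ as a $\lie{g}$-module via the Killing form, so that $\operatorname{rank}\bfk'_{-w_\circ\mu}(\sigma)=\operatorname{rank}\bfk'_\mu(\sigma)$ for all $\sigma$ — this yields the bound $\operatorname{rank}\bfk'_\mu(w_\circ*\lambda)$; the minimum of the two is the claim.

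\emph{The main obstacle.} It is the identification in the preceding paragraph: that the copies of $\vg(\mu)$ inside $\lie{U}(\lie{g}\times\lie{g})v_0$ are detected precisely by $\bfk'_{-w_\circ\mu}$ evaluated at $\lambda$. This amounts to threading Harish-Chandra's principal-series formulas through Kostant's separation of variables, and is the technical core of \cite[Sections 3--4]{PRV2}; by contrast, the weakening from an equality to the inequality $\le$ is harmless, reflecting only that one restricts from $\lie{H}'_\xi(0)$ to its simple subquotient $\pihat_{\lambda,0}$. Within the first part, the one delicate step is the injectivity of $p_\mu$, disposed of above by the maximal-weight argument; the remainder is bookkeeping with the Shapovalov form, the tensor--Hom adjunctions, and Kostant's freeness theorem.
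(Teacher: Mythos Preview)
The survey itself does not prove this proposition --- it only records that the result follows from the auxiliary lemmas of \cite[Section~3]{PRV2} --- so your proposal already goes further than the paper does.

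Your argument for the rank identity is correct: passing from $\bfk'_\mu(\nu)$ to $\Hom_{\lie{g}}(V(\mu),\End V(\nu))$ via Kostant's freeness and Burnside, then to $(V(\nu)\otimes V(\mu))^{\lie{n}^+}_\nu$ via the contravariant-form adjoint, and finally to $V(\mu)_0$ via $p_\mu$ (whose injectivity your maximal-weight argument establishes cleanly), is sound. For the inequality you are candid that the crucial step --- constructing the endomorphism $\Theta_\lambda$ with matrix $\bfk'_{-w_\circ\mu}(\lambda)$ and image exactly the $\mu$-highest-weight space inside $\lie{U}(\lie{g}\times\lie{g})v_0$ --- is only asserted; that is indeed the substance of \cite[\S\S3--4]{PRV2}, and neither you nor the survey fills it in.

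One point does need repair. You deduce the second bound from the first using $\pihat_{\lambda,0}\cong\pihat_{w_\circ*\lambda,0}$ together with $\operatorname{rank}\bfk'_{-w_\circ\mu}(\sigma)=\operatorname{rank}\bfk'_\mu(\sigma)$ for every $\sigma\in\lie{h}^*$, justified by ``self-duality of $\Sym\lie{g}$ via the Killing form''. That justification is too vague: self-duality of the graded pieces gives no direct relation between $\beta^\Pi(\bfl(M_i)v_j)$ and $\beta^\Pi(\bfl(M^*_i)v^*_j)$. The assertion is nevertheless true, and the clean route is the Chevalley anti-involution $\iota$ (fixing $\lie{h}$, swapping $e_i\leftrightarrow f_i$): one checks $\beta^\Pi\circ\iota=\beta^\Pi$ on $(\lie{Ug})_0$ and $\iota\circ\bfl=\bfl\circ\iota_{\Sym}$, while $L\mapsto\iota\circ L$ carries $\scrlug{\mu}$ bijectively to $\scrlug{-w_\circ\mu}$ once one identifies $V(\mu)$ twisted by the Cartan involution with $V(-w_\circ\mu)$. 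This produces homogeneous generators for which $\bfk'_{-w_\circ\mu}$ equals $\bfk'_\mu$ up to a constant change of the $v_j$-basis; any other homogeneous basis of $\scrlug{-w_\circ\mu}$ differs by a matrix over $Z(\lie{Ug})$ with scalar determinant, hence (after $\beta^\Pi$) by a matrix over $\Sym\lie{h}$ invertible at every point. With that fix your reduction is valid.
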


\noindent The proposition is proved using the auxiliary lemmas developed
in \cite[Section 3]{PRV2}, and holds for all dominant integral $\mu,
\nu$. The authors now propose another matrix which turns out to be
nonsingular at all regular points $h \in \lie{h}$. To define this,
suppose $M_i \in \scrl^\mu$, $v_j \in V(\mu)_0$, and $q_i(\mu) = \deg(M_i
v_j) \geq 0$ as above. Now there exists a unique $h_{ij} \in \Sym
\lie{h}$ such that $M_i v_j \equiv h_{ij} \mod \sum_{\alpha \in R} (\Sym
\lie{h}) \lie{g}_\alpha$, and moreover, $h_{ij}$ is homogeneous of degree
$q_i(\mu)$ for all $1 \leq i,j \leq d_\mu$. We now introduce the
following terminology.

\begin{defn}\hfill
\begin{enumerate}
\item Define the matrix $\bfk_\mu := ((h_{ij}))_{1 \leq i,j \leq d_\mu}$.

\item Fix $\alpha \in R^+$, and suppose $\alpha \leftrightarrow
h'_\alpha$ via the Killing form. Now define $h_\alpha :=
(2/\alpha(h'_\alpha)) h'_\alpha$. Let $e_\alpha, f_\alpha$ span
$\lie{g}_\alpha, \lie{g}_{-\alpha}$ respectively, and let
$m_{j,\mu}(\alpha)$ denote the multiplicity of the eigenvalue $j(j+1)$
for the restriction of $f_\alpha e_\alpha$ to $V(\mu)_0$.
Finally, define $m_\mu(\alpha) = \sum_{j>0} m_{j,\mu}(\alpha)$.
\end{enumerate}
\end{defn}

The following is the main theorem of \cite{PRV2} involving PRV
determinants:

\begin{theorem}[\cite{PRV2}]
Fix $\mu \in \Lambda^+$. Viewed (via the Killing form) as a polynomial
function on $\lie{h}$, $\det \bfk_\mu$ is nonzero at each regular point
$h \in \lie{h}$. In particular, $\det \bfk_\mu, \det \bfk'_\mu$ are
nonzero elements of $P(\lie{h}^*)$ of degree $\sum_i q_i(\mu)$.
Moreover, there exist nonzero constants $c_\mu, c'_\mu$ such that:
\[ \det \bfk_\mu = c_\mu \prod_{\alpha \in R^+} h_\alpha^{m_\mu(\alpha)},
\qquad \det \bfk'_\mu = c'_\mu \prod_{\alpha \in R^+} \prod_{j \geq 1} \{
h_\alpha + \rho(h_\alpha) - 1, j \}^{m_{j,\mu}(\alpha)}, \]

\noindent where for all $a \in \lie{Ug}$ and $j \in \N$, $\{ a, j \} :=
(-1)^j j!\ a(a-1) \dots (a-j+1)$. (In particular, $\sum_{i=1}^{d_\mu}
q_i(\mu) = \sum_{\alpha \in R^+} m_\mu(\alpha)$.)
\end{theorem}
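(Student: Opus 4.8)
The plan is to reduce the statement about $\det \bfk_\mu$ to a computation of how a maximal torus (equivalently, the Cartan subalgebra, via the Killing form) acts on the zero weight space $V(\mu)_0$, by exploiting the direct sum of $\lie{sl}_2$-copies. First I would clarify what $\bfk_\mu$ encodes: viewing a homogeneous generator $M_i \in \scrl^\mu$ as a $\lie{g}$-map $V(\mu) \to \Sym^{q_i(\mu)}\lie{g}$, the entry $h_{ij}$ is the projection of $M_i v_j$ onto $\Sym \lie{h}$ modulo $\sum_{\alpha} (\Sym \lie{h})\lie{g}_\alpha$, i.e.\ the ``Harish-Chandra part'' of the image of the zero weight vector $v_j$. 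Since the $M_i$ form a basis of $\Hom_{\lie{g}}(V(\mu),\Sym\lie{g})$ over $(\Sym\lie{g})^{G_0}$, and since by Kostant's separation of variables $\Sym \lie{g} \cong \bbh(\lie{g}) \otimes (\Sym\lie{g})^{G_0}$ with $[\bbh(\lie{g}):V(\mu)] = d_\mu$, the map sending $h \in \lie{h}$ to the matrix $\bfk_\mu(h)$ essentially records the evaluation at $h$ of the $d_\mu$-dimensional space of ``harmonic'' realizations of $V(\mu)$ restricted to their zero weight spaces. The first key step, then, is to show $\det\bfk_\mu(h) \neq 0$ for regular $h$, which I would do by showing the $d_\mu$ functionals $v_j \mapsto (M_i v_j)(h)$ on $V(\mu)_0$ are linearly independent; this amounts to showing that harmonic polynomials, evaluated at a regular $h$, separate the zero weight space — a statement that follows from Kostant's description of $\bbh(\lie{g})$ together with the fact that the orbit $G_0 \cdot h$ is a full regular orbit, so restriction of functions on $\lie{g}$ to this orbit (a copy of $G_0/T$) is injective on each isotypic piece.

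The second, and I expect main, step is to pin down the exact product formula, and the natural route is via the known factorization of $\det\bfk'_\mu$ and the passage between $\bfk_\mu$ and $\bfk'_\mu$. The matrix $\bfk'_\mu$ has entries $\beta^\Pi(\bfl(M_i)v_j)$, i.e.\ the genuine Harish-Chandra projection (using the triangular decomposition, not just killing all root vectors) of the symmetrized operator $\bfl(M_i)$ applied to $v_j$. The relation between the two is that $\bfl$ introduces lower-order corrections: schematically, $\beta^\Pi(\bfl(M_i)v_j) = h_{ij} + (\text{lower degree, supported on walls})$, so that $\det\bfk'_\mu = \det\bfk_\mu$ up to the $\rho$-shift and the ``falling factorial'' corrections $\{h_\alpha + \rho(h_\alpha) - 1, j\}$ replacing the leading terms. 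Concretely I would argue one root $\alpha$ at a time: restrict to the $\lie{sl}_2$-triple $(e_\alpha, h_\alpha, f_\alpha)$ and note that $V(\mu)_0$ breaks up, under this $\lie{sl}_2$, into weight-zero (for $h_\alpha$) vectors of various spins $j$; the operator $f_\alpha e_\alpha$ acts on the spin-$j$ part by $j(j+1)$, with multiplicity $m_{j,\mu}(\alpha)$. The vanishing of $\det\bfk_\mu$ to order $m_\mu(\alpha) = \sum_{j\geq 1} m_{j,\mu}(\alpha)$ along the wall $h_\alpha = 0$ comes precisely from the fact that on the spin-$j$ part, a harmonic realization's Harish-Chandra component must vanish on that wall to first order for each $j \geq 1$ (the spin-$0$ part contributes nothing, hence the $j \geq 1$ restriction). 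Summing the orders of vanishing over all positive roots, and matching total degree (which forces $\sum_i q_i(\mu) = \sum_{\alpha} m_\mu(\alpha)$), identifies $\det\bfk_\mu$ with $c_\mu \prod_\alpha h_\alpha^{m_\mu(\alpha)}$ up to a constant, since both sides are polynomials of the same degree with the same divisor.

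For the $\bfk'_\mu$ formula, I would run the same $\lie{sl}_2$-by-$\lie{sl}_2$ analysis but now using the actual triangular Harish-Chandra map: on the spin-$j$ component, the symmetrization $\bfl$ together with $\beta^\Pi$ turns the naive $h_\alpha^{\text{(something)}}$ into the shifted falling factorial $\{h_\alpha + \rho(h_\alpha) - 1, j\} = (-1)^j j! \prod_{k=0}^{j-1}(h_\alpha + \rho(h_\alpha) - 1 - k)$, reflecting that the relevant eigenvalue of the Casimir-type operator $f_\alpha e_\alpha$ along the $\rho$-shifted line is what governs vanishing of the projection; this is a direct $\lie{sl}_2$ computation of how $\beta^\Pi$ interacts with the subalgebra generated by $h_\alpha, f_\alpha e_\alpha$, essentially the $\lie{sl}_2$ case of Harish-Chandra's theorem where $\beta^\Pi(\Delta) = h^2 + 2h = (h+1)^2 - 1$ as recorded in the excerpt. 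The hard part, as flagged, will be step two: keeping precise track of the lower-order terms introduced by symmetrization so that the divisor of $\det\bfk'_\mu$ is correctly identified, and justifying the reduction ``one root at a time'' — which requires knowing that the contributions of distinct positive roots $\alpha$ to the vanishing locus are independent (the $h_\alpha$ are distinct irreducible polynomials on $\lie{h}$, so this is ultimately a statement about the hyperplane arrangement, but the multiplicity bookkeeping against the $\lie{sl}_2$-decompositions of $V(\mu)_0$ is where the real work lies). Finally, nonvanishing of the constants $c_\mu, c'_\mu$ follows from step one (the determinants are not identically zero) combined with the degree count.
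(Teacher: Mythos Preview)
The expository paper under review does not actually supply a proof of this theorem: it is quoted verbatim as a result from \cite{PRV2}, with no argument beyond the statement and the illustrative $\lie{sl}_2$ example that follows. So there is no ``paper's own proof'' to compare against here; what one can compare to is the original argument in \cite{PRV2} (their \S\S3--4), and your outline does track that argument in broad strokes: Kostant's separation of variables to control nonvanishing at regular points, a root-by-root $\lie{sl}_2$ analysis of $V(\mu)_0$ to identify the divisor of $\det\bfk_\mu$, and the passage from $\bfk_\mu$ to $\bfk'_\mu$ via symmetrization plus the $\rho$-shift built into $\beta^\Pi$.

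That said, there is a genuine soft spot in your step two. You assert that ``on the spin-$j$ part, a harmonic realization's Harish-Chandra component must vanish on that wall to first order for each $j \geq 1$,'' and sum these first-order vanishings to get order $m_\mu(\alpha)$. But this is not how the multiplicity arises: the determinant is a single $d_\mu \times d_\mu$ determinant, not a product of scalar functions over the spin pieces, so ``each spin-$j$ piece contributes one order of vanishing'' is not a well-posed statement without further structure. What \cite{PRV2} actually do is closer to a rank argument: they show (via the $\lie{sl}_2$-module structure along $\alpha$) that the matrix $\bfk_\mu$, restricted to the hyperplane $h_\alpha = 0$, has rank exactly $d_\mu - m_\mu(\alpha) = m_{0,\mu}(\alpha)$, because only the $\lie{sl}_2$-invariant part of $V(\mu)_0$ survives the projection there. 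One then needs a separate lemma to convert ``rank drops by $r$ on the hyperplane'' into ``$h_\alpha^r$ divides the determinant,'' which is not automatic (rank dropping by $r$ on a hypersurface guarantees only divisibility by $h_\alpha$, not $h_\alpha^r$, in general); in \cite{PRV2} this step goes through a more careful choice of basis adapted to the $\lie{sl}_2$-decomposition and an explicit block structure of the matrix near the wall. Your sketch correctly flags this as ``where the real work lies,'' but the mechanism you describe for it is not quite right, and filling it in requires exactly the block-triangularization argument that constitutes the technical heart of the original proof.
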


\noindent This is a powerful theorem that computes the PRV determinants
in a simple manner. It can be used to compute these determinants
explicitly for simple $\lie{g}$, with $V(\mu)$ the adjoint
representation, for instance. Let us take a simple example: $\lie{g} =
\lie{sl}_2$. First note that $d_\mu > 0$ if and only if $\mu \in 2
\Z_{\geq 0} \varpi$. Suppose this holds, and apply Equation \eqref{Esl2}.
Then, $f e \cdot v_0 = (\mu/2)(\mu/2 + 1)v_0$, so $m_{j,\mu}(\alpha) =
\delta_{j, \mu/2}$ and $m_\mu(\alpha) = 1$ for all even $\mu$. Hence
\[ \det \bfk_\mu = c_\mu h, \qquad \det \bfk'_\mu = c'_\mu \{ h, \mu/2 \}
\in \C^\times \cdot h(h-1) \dots (h - \mu/2 + 1). \]

\noindent It turns out that this is exactly the Shapovalov determinant
for $\lie{sl}_2(\C)$ (up to a scalar). We now show how these are related
to PRV determinants, before moving on to the next section.

\subsection{Annihilators of Verma modules}

Since they were defined and computed in \cite{PRV2}, PRV determinants
have played a role in the study of annihilators of Verma modules and
their simple quotients, in the following manner. In \cite{Du1}, Duflo
proved the following remarkable result: {\em The annihilator of every
Verma module is centrally generated.} In other words, for any complex
semisimple $\lie{g}$,
\begin{equation}\label{Eduflo}
\Ann_{\lie{Ug}} M(\lambda) = \lie{Ug} \cdot \Ann_{Z(\lie{Ug})} M(\lambda)
= \lie{Ug} \cdot \ker \chi(\lambda), \qquad \forall \lambda \in
\lie{h}^*.
\end{equation}

\noindent The proof of this statement requires a nontrivial
algebro-geometric argument from \cite{Kos2}. Thus, it cannot be extended
to the setting of quantum groups $U_q(\lie{g})$, and a new proof was
sought. This was provided by Joseph for quantum groups, but it holds in
the classical setting as well. In this part, we discuss how PRV
determinants play a role in proving Duflo's result.

Given a weight $\mu \in \lie{h}^*$, every weight space $M(\mu)_{\mu -
\beta}$ of a Verma module has a bilinear form defined on it (where $\beta
\in \Z_{\geq 0} \Pi$). To see this, first fix a Lie algebra
anti-involution $\iota : \lie{g} \to \lie{g}$ that fixes $\lie{h}$ and
sends $e_i$ to $f_i$ for all $i \in I$. Then $\iota$ sends
$\lie{g}_\alpha$ to $\lie{g}_{-\alpha}$ for all roots $\alpha$, and also
extends to an algebra anti-involution of $\lie{Ug}$. Now the {\it
Shapovalov form} is defined as follows:
\[ \Sh(b_1, b_2) := \beta^\Pi(\iota(b_1) \cdot b_2) \in \Sym \lie{h},
\qquad \Sh_\mu(b_1 m_\mu, b_2 m_\mu) := \mu(\Sh(b_1, b_2)), \qquad
\forall b_1, b_2 \in \lie{Un}^-, \]

\noindent where $m_\mu \in M(\mu)_\mu$ generates the Verma module. One
shows that the nondegeneracy of this form can be checked on each
individual weight space. Thus for all $\nu \in \Z_{\geq 0} \Pi$, let
$\det \Sh^\nu$ be the determinant of $\Sh$, when restricted to a (fixed)
weight space basis of $(\lie{Un}^-)_{-\nu}$. The Shapovalov form can now
be shown to possess the following properties (see \cite{MP,Sha} for
instance):

\begin{theorem}
For all $\mu \in \lie{h}^*$, $\Sh_\mu$ is a symmetric bilinear form on
$M(\mu)$. $\Sh_\mu(M(\mu)_\nu, M(\mu)_{\nu'})$ is nonzero only when $\nu
= \nu' \in \mu - \Z_{\geq 0} \Pi$. Moreover, the radical of $\Sh_\mu$ is
the unique maximal submodule of $M(\mu)$, so it is nondegenerate if and
only if $M(\mu) \cong V(\mu)$. Finally, there exists a nonzero constant
$c''_\nu$ such that
\[ \det \Sh^\nu = c''_\nu \prod_{\alpha \in R^+} \prod_{j \geq 1}
(h_\alpha + \rho(h_\alpha) - j)^{\calp(\nu - j \alpha)}. \]
\end{theorem}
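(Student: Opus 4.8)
The structural assertions are formal and I would dispatch them first; the determinant formula is the substance. For contravariance of $\Sh_\mu$ -- that $\Sh_\mu(x v, v') = \Sh_\mu(v, \iota(x) v')$ for $x \in \lie{Ug}$ -- I would combine $\iota(b_1 b_2) = \iota(b_2)\iota(b_1)$ with the observation that $\beta^\Pi \circ \iota = \beta^\Pi$ on $(\lie{Ug})_0$: since $\iota$ interchanges $\lie{n}^+$ and $\lie{n}^-$ it preserves the PBW decomposition $(\lie{Ug})_0 = \lie{Uh} \oplus \big(\lie{n}^-(\lie{Ug})\lie{n}^+ \cap (\lie{Ug})_0\big)$ and restricts to the identity on $\lie{Uh} = \Sym\lie{h}$. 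Symmetry of $\Sh_\mu$ is then immediate from $\iota^2 = \mathrm{id}$, and applying contravariance with $x = h \in \lie{h}$ (using $\iota(h) = h$) gives $\nu(h)\Sh_\mu(v,v') = \nu'(h)\Sh_\mu(v,v')$ for $v \in M(\mu)_\nu$, $v' \in M(\mu)_{\nu'}$; hence $\Sh_\mu$ pairs distinct weight spaces to zero, nondegeneracy may be tested one weight space at a time, and $\det\Sh^\nu$ is well defined. Contravariance also makes $\mathrm{rad}\,\Sh_\mu$ a submodule, which is proper since $\Sh_\mu(m_\mu, m_\mu) = \mu(1) = 1$; conversely, for a proper submodule $N$ (so $N \cap M(\mu)_\mu = 0$), $v \in N$ and $b \in \lie{Un}^-$, I would write $\Sh_\mu(v, b\,m_\mu) = \Sh_\mu(\iota(b)v, m_\mu)$ and note this vanishes because $\iota(b)v$ either has weight $\ne\mu$ or lies in $N \cap M(\mu)_\mu = 0$; as the $b\,m_\mu$ span $M(\mu)$, $N \subseteq \mathrm{rad}\,\Sh_\mu$. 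Thus $\mathrm{rad}\,\Sh_\mu$ is the unique maximal submodule, and it vanishes exactly when $M(\mu) \cong V(\mu)$.

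For the determinant I would fix a PBW weight basis of $(\lie{Un}^-)_{-\nu}$, of size $\calp(\nu)$, and use that each matrix entry $\mu(\Sh(b_I,b_J))$ is the value at $\mu$ of a polynomial $\Sh(b_I,b_J) \in \Sym\lie{h} = P(\lie{h}^*)$, so $\det\Sh^\nu \in P(\lie{h}^*)$; it is not identically zero since $M(\mu)$ is simple for generic $\mu$. A leading-term analysis -- the $\lie{h}$-degree of $\Sh(b_I,b_J)$ is at most the number of root-vector factors, with the bound attained exactly on the diagonal and, off the diagonal, strictly smaller than both flanking diagonal degrees (distinct PBW monomials have distinct root multisets) -- shows that $\deg\det\Sh^\nu$ equals the sum over the PBW basis of the numbers of factors, which reorganizes as $\sum_{\alpha\in R^+}\sum_{j\ge 1}\calp(\nu - j\alpha)$, and simultaneously fixes the ($\mu$-independent) constant $c''_\nu$ from the product of the diagonal leading terms.

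Next I would locate the factors. For $\alpha \in R^+$ and $j \ge 1$, the hyperplane $H_{\alpha,j} = \{\mu : (\mu+\rho)(h_\alpha) = j\}$ is precisely the zero locus of the linear polynomial $h_\alpha + \rho(h_\alpha) - j \in P(\lie{h}^*)$. By Verma's embedding theorem -- for $\alpha = \alpha_i$ simple this is the rank-one identity $e_i f_i^j m_\mu = j\big((\mu+\rho)(h_i) - j\big) f_i^{j-1} m_\mu$ inside the $\alpha_i$-copy of $\lie{sl}_2$, and the general case reduces to it -- every $\mu \in H_{\alpha,j}$ admits a nonzero map $M(\mu - j\alpha) \hookrightarrow M(\mu)$, whose image lies in $\mathrm{rad}\,\Sh_\mu$ and contributes $\dim M(\mu - j\alpha)_{\mu - \nu} = \calp(\nu - j\alpha)$ to the corank of $\Sh^\nu_\mu$ at each such $\mu$. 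Promoting this to the divisibility of $\det\Sh^\nu$ by $(h_\alpha + \rho(h_\alpha) - j)^{\calp(\nu - j\alpha)}$ is the crux: after localizing $P(\lie{h}^*)$ at the generic point of $H_{\alpha,j}$, one must exhibit a $\calp(\nu-j\alpha)$-dimensional subspace on which the Gram matrix is divisible by the uniformizer to full order -- this is exactly Shapovalov's rank-one reduction, or equivalently a consequence of the Jantzen filtration and sum formula. Since the linear forms for distinct pairs $(\alpha,j)$ are pairwise coprime and $\det\Sh^\nu \ne 0$, the product $\prod_{\alpha\in R^+}\prod_{j\ge 1}(h_\alpha + \rho(h_\alpha) - j)^{\calp(\nu-j\alpha)}$ divides $\det\Sh^\nu$; a comparison of degrees with the previous step forces equality up to the scalar $c''_\nu$ already pinned down there.

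The step I expect to be the obstacle is the order-of-vanishing estimate in the previous paragraph: exhibiting the submodule $M(\mu - j\alpha)$ only shows that $\det\Sh^\nu$ vanishes on $H_{\alpha,j}$ to order $\ge 1$, whereas the formula demands order $\ge \calp(\nu - j\alpha)$, and recovering the full multiplicity is what makes Shapovalov's theorem genuinely hard. In the present exposition there is a tempting alternative to Shapovalov's own argument: Kostant's decomposition $\lie{Ug} \cong \bbh(\lie{g}) \otimes Z(\lie{Ug})$ and the factorizations of $\det\bfk_\mu$ and $\det\bfk'_\mu$ established just above, transported along the symmetrization map $\bfl$ and the Harish-Chandra projection, describe the degeneracy locus of precisely these contravariant forms -- this is the promised statement that the $\lie{sl}_2$ PRV determinant is exactly the Shapovalov determinant up to a scalar. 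I would therefore either cite \cite{MP,Sha} for the classical multiplicity computation, or -- to stay within the paper's framework -- deduce the Shapovalov formula from the PRV-determinant theorem.
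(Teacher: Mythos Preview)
The paper does not actually prove this theorem: it is stated as a known result with the parenthetical ``(see \cite{MP,Sha} for instance)'' and no argument is given. Your proposal therefore goes well beyond the paper's own treatment, which is a bare citation.

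That said, your sketch is the standard one and is correct in outline. The structural claims (contravariance via $\beta^\Pi\circ\iota=\beta^\Pi$ on $(\lie{Ug})_0$, symmetry, orthogonality of distinct weight spaces, radical equals maximal submodule) are exactly how they are proved in \cite{MP,Sha}. Your degree count and identification of the linear factors via Verma embeddings are also the right ingredients, and you correctly identify the genuine difficulty: upgrading ``$\det\Sh^\nu$ vanishes on $H_{\alpha,j}$'' to ``vanishes to order $\ge\calp(\nu-j\alpha)$'' is the heart of Shapovalov's argument (or, equivalently, follows from Jantzen's sum formula). Your closing suggestion --- to deduce the Shapovalov formula from the PRV-determinant theorem via the coincidence of linear factors --- is appealing but is \emph{not} the route taken in the paper either: the paper cites both determinant formulas independently (Shapovalov's from \cite{MP,Sha}, the PRV one from \cite{PRV2}) and then quotes the comparison of their factors from \cite{FL} as a separate, subsequent result. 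So while that alternative is a reasonable strategy, it would require an independent argument that the multiplicities match, not just the supports; the $\lie{sl}_2$ coincidence you mention is suggestive but not a proof in higher rank.
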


We note that both the PRV and Shapovalov determinants $\det \bfk'_\mu,
\det \Sh^\nu$ are products of linear factors. However, more holds! (The
rest of this part is from \cite{FL}.)

\begin{theorem}
For all semisimple $\lie{g}$, the set of all linear factors in $\{ \det
\bfk'_\mu : \mu \in \Lambda^+, d_\mu > 0 \}$ and $\{ \det \Sh^\nu : \nu
\in \Z_{\geq 0} \Pi \}$ coincide. Moreover, given $\lambda \in
\lie{h}^*$, the annihilator $\Ann_{\bbh(\lie{g})} V(\lambda)$ is trivial
if and only $\lambda(\det \bfk'_\mu) = 0$ whenever $d_\mu > 0$.
\end{theorem}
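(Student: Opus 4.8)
The plan is to treat the two assertions separately. For the coincidence of linear factors, I would first rewrite both determinant formulas so their factors have a common shape. Expanding the symbol, $\{h_\alpha + \rho(h_\alpha) - 1, j\} = (-1)^j j! \prod_{k=1}^{j} (h_\alpha + \rho(h_\alpha) - k)$, so every linear factor of every $\det \bfk'_\mu$ is of the form $h_\alpha + \rho(h_\alpha) - k$ with $\alpha \in R^+$ and $k \in \N$, and the Shapovalov factors $h_\alpha + \rho(h_\alpha) - j$ are already of this form. Hence it suffices to show that every such $h_\alpha + \rho(h_\alpha) - k$ actually occurs in both families. For the Shapovalov determinants $\det\Sh^\nu$ this is immediate: taking $\nu = k\alpha \in \Z_{\geq 0}\Pi$ gives $\calp(\nu - k\alpha) = \calp(0) = 1 > 0$, so $h_\alpha + \rho(h_\alpha) - k$ divides $\det\Sh^{k\alpha}$. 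For the PRV determinants I would produce, for each $\alpha \in R^+$ and each $k$, some $\mu \in \Lambda^+$ with $d_\mu > 0$ and $m_{k,\mu}(\alpha) > 0$, via the following construction. Let $e_\alpha$ span $\lie{g}_\alpha$ and let $f_\alpha, h_\alpha$ complete it to an $\lie{sl}_2$-triple; inside $\lie{g}^{\otimes k}$ (adjoint action in each factor), the vector $e_\alpha^{\otimes k}$ is annihilated by $e_\alpha$ and has $h_\alpha$-weight $2k$, so it generates a copy of the $(2k+1)$-dimensional $\lie{sl}_2$-irreducible, whose zero weight vector $v_0 := f_\alpha^k e_\alpha^{\otimes k} \neq 0$ satisfies $f_\alpha e_\alpha \cdot v_0 = k(k+1) v_0$. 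Since $v_0$ lies in the $\lie{h}$-zero weight space and $f_\alpha e_\alpha \in \lie{Ug}$ acts componentwise on the decomposition of $\lie{g}^{\otimes k}$ into $\lie{g}$-isotypic pieces, some nonzero $V(\mu_0)$-component of $v_0$ lies in $V(\mu_0)_0$ and is an eigenvector of $f_\alpha e_\alpha|_{V(\mu_0)_0}$ with eigenvalue $k(k+1)$; then $d_{\mu_0} > 0$ and $m_{k,\mu_0}(\alpha) > 0$, so $h_\alpha + \rho(h_\alpha) - k$ divides $\det\bfk'_{\mu_0}$. This proves the first assertion.

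For the statement on $\Ann_{\bbh(\lie{g})} V(\lambda)$, I would start from Kostant's decomposition $\lie{Ug} \cong \bbh(\lie{g}) \otimes Z(\lie{Ug})$ of \eqref{Ekostant}: since $Z(\lie{Ug})$ acts on $V(\lambda)$ by the scalars $\chi(\lambda)$, the images of $\bbh(\lie{g})$ and of $\lie{Ug}$ in $\End V(\lambda)$ coincide, so $\bbh(\lie{g})$ surjects onto $\lie{Ug}/\Ann_{\lie{Ug}} V(\lambda)$ with kernel exactly $\Ann_{\bbh(\lie{g})} V(\lambda)$; in particular $\Ann_{\bbh(\lie{g})} V(\lambda) = 0$ if and only if $\Ann_{\lie{Ug}} V(\lambda) = \lie{Ug} \cdot \ker\chi(\lambda)$, which by Duflo's theorem \eqref{Eduflo} equals $\Ann_{\lie{Ug}} M(\lambda)$. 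Moreover $\Ann_{\bbh(\lie{g})} V(\lambda)$ is stable under the adjoint action, hence decomposes as $\bigoplus_{\mu:\, d_\mu > 0} V(\mu) \otimes N_\mu(\lambda)$, where $N_\mu(\lambda)$ is a subspace of the $d_\mu$-dimensional multiplicity space $\Hom_{\lie{g}}(V(\mu), \bbh(\lie{g}))$; I would fix on the latter the basis given by the images $\bfl(M_i)$ of homogeneous harmonic generators $M_i$ of $\scrl^\mu$. Thus triviality of $\Ann_{\bbh(\lie{g})} V(\lambda)$ is equivalent to $N_\mu(\lambda) = 0$ for all $\mu$ with $d_\mu > 0$.

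Fix such a $\mu$. The bridge to the PRV matrix is that $(\bfk'_\mu(\lambda))_{ij} = \lambda(\beta^\Pi(\bfl(M_i) v_j))$ is precisely the scalar by which the weight-zero element $\bfl(M_i) v_j$ acts on the highest weight line of $M(\lambda)$, hence of $V(\lambda)$. Consequently, a nonzero left null vector $(c_i)$ of $\bfk'_\mu(\lambda)$ produces a nonzero $\psi := \sum_i c_i \bfl(M_i) \in \Hom_{\lie{g}}(V(\mu), \bbh(\lie{g}))$ whose image kills the highest weight line; conversely any $0 \neq n \in N_\mu(\lambda)$ arises from a $\psi$ of this kind (one that even annihilates all of $V(\lambda)$), so $N_\mu(\lambda) \neq 0$ forces $\lambda(\det\bfk'_\mu) = 0$. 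This already gives one implication: if $\lambda(\det\bfk'_\mu) \neq 0$ for every $\mu$ with $d_\mu > 0$, then every $N_\mu(\lambda)$ vanishes, and $\Ann_{\bbh(\lie{g})} V(\lambda) = 0$.

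The converse is where the essential difficulty lies. One must promote the above into the exact identity $\dim N_\mu(\lambda) = d_\mu - {\rm rank}\, \bfk'_\mu(\lambda)$ for all $\lambda$ and all $\mu$ with $d_\mu > 0$ --- equivalently, that a copy of $V(\mu)$ inside $\bbh(\lie{g})$ annihilates all of $V(\lambda)$ as soon as its zero weight part annihilates the highest weight line. When $\lambda \in \Lambda^+$ this follows by combining the rank computation ${\rm rank}\, \bfk'_\mu(\lambda) = \dim V^+(\mu; 0, -w_\circ \lambda)$ from the previous section with the PRV Theorem \ref{Tprv1}, both sides being identified with $[V(\lambda) \otimes V(\lambda)^* : V(\mu)] = [\lie{Ug}/\Ann_{\lie{Ug}} V(\lambda) : V(\mu)] = d_\mu - \dim N_\mu(\lambda)$. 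The case of general $\lambda \in \lie{h}^*$ --- extending the identity $[\lie{Ug}/\Ann_{\lie{Ug}} V(\lambda) : V(\mu)] = {\rm rank}\, \bfk'_\mu(\lambda)$ off the finite-dimensional locus, where Duflo's theorem \eqref{Eduflo} is used to keep $\Ann_{\lie{Ug}} V(\lambda)$ under control --- is carried out in \cite{FL}; this extension past $\Lambda^+$ is the main obstacle. Granting it, $\Ann_{\bbh(\lie{g})} V(\lambda) = 0 \iff {\rm rank}\, \bfk'_\mu(\lambda) = d_\mu$ for every $\mu$ with $d_\mu > 0 \iff \lambda(\det\bfk'_\mu) \neq 0$ for every such $\mu$, which is the assertion.
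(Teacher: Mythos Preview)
The paper does not give its own proof of this theorem. It is quoted as a result taken from \cite{FL} (the sentence just before the statement reads ``(The rest of this part is from \cite{FL}.)''), and what follows is only an outline of Joseph's program in which the result is used, not an argument for it. So there is no in-paper proof to compare your proposal against.

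On the proposal itself: your treatment of the first assertion is correct and self-contained. The tensor-power construction with $e_\alpha^{\otimes k}\in\lie{g}^{\otimes k}$ is a clean way to produce, for each $\alpha\in R^+$ and $k\geq 1$, an irreducible constituent $V(\mu_0)$ with $d_{\mu_0}>0$ and $m_{k,\mu_0}(\alpha)>0$; the point that $f_\alpha e_\alpha$ acts as $(f_\alpha e_\alpha|_{V(\mu)})\otimes\mathrm{id}$ on each isotypic piece is exactly what makes the projection step go through. For the second assertion you have the easy direction right, and you have correctly located the genuine content in the converse --- upgrading ``kills the highest weight line'' to ``kills all of $V(\lambda)$'' for arbitrary $\lambda\in\lie{h}^*$ --- and you appropriately defer that to \cite{FL}, which is precisely what the paper itself does. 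One small remark: the printed statement has ``$\lambda(\det\bfk'_\mu)=0$'', whereas the intended condition (which you use) is ``$\lambda(\det\bfk'_\mu)\neq 0$'' for all $\mu$ with $d_\mu>0$; triviality of the annihilator corresponds to nonvanishing of all the PRV determinants.
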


Thus, another approach to proving Duflo's ``Verma module annihilator
theorem" \eqref{Eduflo} is to proceed as follows. This is a program
developed by Joseph and his coauthors.
\begin{enumerate}
\item $U := \lie{Ug}$ has a large ``locally finite subalgebra" $F(U) :=
\{ a \in U : \dim (\ad U)a < \infty \}$.

\item A ``Peter-Weyl" type result holds: $F(U) := \bigoplus_{\lambda \in
\Lambda^+} \End_\C V(\lambda)$. (Note that for $U = \lie{Ug}$, this is
proved for $F(U) = U$ using the perfect pairing between left-invariant
differential operators in $\lie{Ug}$ and regular functions on the simply
connected Lie group $G$ for which $\lie{g} = \Lie(G)$, together with the
usual Peter-Weyl Theorem for regular functions on $G$.)
Under this identification, the lifts of the identity elements in the
various summands are a basis $\{ z_\lambda : \lambda \in \Lambda^+ \}$ of
the center; moreover, $Z(U)$ is a polynomial algebra in the generators
$\{ z_{\varpi_i} : i \in I \}$.

\item There exists an $\ad U$-stable submodule $\bbh$ of $F(U)$ such that
the multiplication map $: \bbh \otimes Z(U) \to F(U)$ is an isomorphism
of $\ad U$-modules.\footnote{If $U = \lie{Ug}$, then $F(U) = U$, and this
result is precisely Equation \eqref{Ekostant}. One can also use this and
Example \ref{Eprv} to prove the Peter-Weyl type result mentioned above,
by counting (countably infinite) multiplicities.} Here, $\ad$ is the
standard adjoint action of the Hopf algebra $U$ on itself.

\item Now define the PRV determinants using the above facts, and the
Shapovalov determinants using the anti-involution $\iota$ and the
Harish-Chandra projection $\beta^\Pi$. Then calculate both sets of
determinants and verify their properties as in the above results.

\item Now use the PRV and Shapovalov determinants for any simple
submodule of a Verma module $M(\lambda)$, which is itself a Verma module
with the same central character $\chi(\lambda)$. Some more work now shows
Duflo's result.
\end{enumerate}

The important point is that this approach works not only for $U =
\lie{Ug}$, but also for $U = U_q(\lie{g})$. (Unlike $\lie{Ug}$, $F(U)
\neq U$ in the quantum case.) Thus, Joseph and Letzter proved the quantum
``separation of variables" theorem, and defined and computed the related
PRV determinants. See \cite{JL1,JL2}; also see \cite{FL} for a historical
exposition of this program.

Joseph and others have since extended this approach to affine Lie
algebras. Similarly, Gorelik and Lanzmann \cite{GL} have also carried out
this program for reductive super Lie algebras. They found that the PRV
determinants contained some ``extra factors" compared to the Shapovalov
determinants, and their zeroes are precisely the weights for which the
corresponding Verma module annihilators are not centrally generated.
Thus, the PRV and Shapovalov determinants (or more precisely, their
common zeroes) turn out to yield, in various settings, both an approach
to proving Duflo's Theorem \eqref{Eduflo}, as well as the set of Verma
modules for which it holds.

\subsection{KPRV determinants}

We end this section with a remark. Kostant described certain analogues of
the PRV determinants in \cite{Kos3} involving parabolic subalgebras of
$\lie{g}$; these analogues had applications related to the irreducibility
of principal series representations. Joseph termed these the {\it KPRV
determinants}, and together with Letzter and with Todoric, has defined
such notions for (quantum) semisimple and affine Lie algebras. (See
\cite{Jo3,JL3,JLT,JT} for more on this, including applications to
annihilators of Verma modules.) Thus, the PRV determinants and their
generalizations continue to be a useful and popular subject of research
in several different settings in representation theory.

\section{Representations of class zero}

In \cite[Section 4]{PRV2}, the authors apply the theory previously
developed to carry out a deeper study of a special sub-family of
irreducible admissible $(\lie{g} \times \lie{g},
\overline{\lie{g}})$-modules: the ones of ``class zero". Here is a brief
discussion of these modules and related results.

\begin{defn}
An irreducible $\overline{\lie{g}}$-admissible $G$-representation $V$ is
said to be of {\it class zero} if $[V : \vg(0)] > 0$.
\end{defn}

\noindent (Note that these are usually referred to as ``class one"
representations in the literature - i.e., irreducible admissible
$(G,K)$-modules which have a $K$-invariant vector. In other words, ``class
one" refers to a $K$-eigenvector with simultaneous eigenvalue $1$, while
``class zero" refers to a $\overline{\lie{g}}$-eigenvector with
simultaneous eigenvalue $0$.) The first result says that every
irreducible Harish-Chandra module of class zero is determined by its
central character; in fact, it is of the form $\pihat_{\lambda,0}$ for
some $\lambda \in \lie{h}^*$. Thus, we obtain deeper insights into the
classification of such modules.(See Equation \eqref{Ehar} and Theorem
\ref{Tchar}.)

\begin{theorem}[\cite{PRV2}]\label{Tprv5}
The set of infinitesimal equivalence classes of class zero irreducible
representations $V$ is in bijection with the twisted Weyl group orbits in
$\lie{h}^*$. More precisely, every such $V$ is uniquely determined by its
infinitesimal character $\chi_V$ restricted to $\one{Z(\lie{Ug})}$.
Moreover, given $\chi_V : Z(\lie{U}(\lie{g} \times \lie{g})) \to \C$,
there exists $\lambda \in \lie{h}^*$ such that
\[ \chi_V = \chi(\lambda, -\lambda - 2 \rho), \qquad V \cong
\pihat_{\lambda,0} \cong \pihat_{w * \lambda, 0}\ \forall w \in W. \]

\noindent In particular $[V : \vg(0)] = 1$ and $0$ is the minimal type of
$V$.
\end{theorem}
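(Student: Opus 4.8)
The real content of the theorem is the assertion that \emph{every} class zero irreducible $V$ is isomorphic to $\pihat_{\lambda,0}$ for some $\lambda\in\lie{h}^*$; once this is in hand, all the remaining claims follow by citing results already established. The plan is to obtain this isomorphism from Harish-Chandra's subquotient theorem together with the realization of $\pihat_{\lambda,\nu}$ as a subquotient of the principal series $\pi_{\xi,\nu}$.

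First I would invoke the subquotient theorem: $V$, being an irreducible admissible module equipped with an infinitesimal character, is infinitesimally equivalent to a subquotient $A/B$ of some $\pi_{\xi,\nu}$, where $\xi\in\lie{h}^*$, $\nu\in\Lambda$, and $B\subseteq A$ are closed $G$-submodules of $\pi_{\xi,\nu}$. Since $V$ is of class zero, $[\pi_{\xi,\nu}:\vg(0)]\geq[A/B:\vg(0)]=[V:\vg(0)]>0$; by Theorem~\ref{Thar}(1) this multiplicity equals $\dim\vg(0)_\nu$, which is nonzero only when $\nu=0$, because $\vg(0)$ is the trivial one-dimensional module, concentrated in weight $0$. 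Hence $\nu=0$ and $[\pi_{\xi,0}:\vg(0)]=\dim\vg(0)_0=1$; since passing to the subquotient $A/B$ cannot increase multiplicities ($[A/B:\vg(0)]=[A:\vg(0)]-[B:\vg(0)]\le[\pi_{\xi,0}:\vg(0)]$), this forces $[V:\vg(0)]=1$ and, more precisely, forces the unique copy of $\vg(0)$ inside $\pi_{\xi,0}$ to lie in $A$ and to meet $B$ trivially.

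Next I would run the uniqueness argument that pins down $V$. Since $A$ contains that copy of $\vg(0)$, it contains the smallest closed $G$-submodule $\lie{H}'_\xi(0)$ generated by it; and $B\cap\lie{H}'_\xi(0)$ is a closed submodule of $\lie{H}'_\xi(0)$ meeting $\vg(0)$ trivially, hence is contained in $\lie{H}''_\xi(0)$ by maximality of the latter. Therefore the image of $\lie{H}'_\xi(0)$ in $A/B$ is a nonzero submodule, so it equals $V$ by irreducibility; and as $V\cong\lie{H}'_\xi(0)/(B\cap\lie{H}'_\xi(0))$ is a quotient of $\lie{H}'_\xi(0)$ by a submodule contained in $\lie{H}''_\xi(0)$, it surjects onto $\lie{H}'_\xi(0)/\lie{H}''_\xi(0)=\pihat_{\lambda,0}$ with $\lambda:=\frac{1}{2}\xi-\rho$. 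A surjection between two irreducible modules is an isomorphism, so $V\cong\pihat_{\lambda,0}$. The remaining statements now follow by citation: Theorem~\ref{Tchar} with $\nu=0$ gives $\chi_V=\chi_{\pihat_{\lambda,0}}=\chi(\lambda,-\lambda-2\rho)$, whose restriction to $\one{Z(\lie{Ug})}$ is $\chi(\lambda)$; Corollary~\ref{Cequiv} gives $\pihat_{\lambda,0}\cong\pihat_{w*\lambda,0}$ for all $w\in W$ (as $w$ fixes the weight $0$); and $[V:\vg(0)]=1$ together with ``$0$ is the minimal type of $V$'' are inherited from the known properties of $\pihat_{\lambda,0}$. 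For the bijection, the assignment sending the $(W,*)$-orbit of $\lambda$ to the class of $\pihat_{\lambda,0}$ is well defined by Corollary~\ref{Cequiv}, surjects onto the class zero classes by the above, and is injective by the $\nu=0$ case of Theorem~\ref{Trnu} (giving $\scrr_{0,\Pi}=\I_0=P(\lie{h}^*)^{(W,*)}$) together with Corollary~\ref{Czero}; since $\chi(\lambda)$ determines the $(W,*)$-orbit of $\lambda$ by Theorem~\ref{Thc}, this also shows $V$ is determined by $\chi_V|_{\one{Z(\lie{Ug})}}$.

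The main obstacle is conceptual rather than computational: the whole argument rests on Harish-Chandra's subquotient theorem in the complex case, which is the deep input; everything past it is bookkeeping with closed submodules of $\pi_{\xi,0}$ and appeals to Theorems~\ref{Thar}, \ref{Tchar}, \ref{Trnu} and Corollaries~\ref{Cequiv}, \ref{Czero}. A variant closer to the presentation in \cite{PRV2} would instead identify the key homomorphism $\eta_{V,\vg(0)}:\Omega\to\C$ with $\h^\Pi(-;\lambda,0)$ for a suitable $\lambda$ and then apply Equation~\eqref{Ehar}; there the delicate point would be to argue directly that $\eta_{V,\vg(0)}$ is one-dimensional and of exactly that form, which again relies on the same circle of Harish-Chandra results.
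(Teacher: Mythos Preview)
Your argument is correct. The paper does not spell out a proof of Theorem~\ref{Tprv5} but only points to Equation~\eqref{Ehar} and Theorem~\ref{Tchar}; your route reaches the same conclusion by a slightly different finishing move. Both approaches start from Harish-Chandra's subquotient theorem to realize $V$ inside some $\pi_{\xi,\nu}$ and use Theorem~\ref{Thar}(1) to force $\nu=0$ and $[V:\vg(0)]=1$. The paper's preferred line (and the one emphasized throughout \cite{PRV2}) is then to note that the key homomorphism $\eta_{V,\vg(0)}:\Omega\to\C$ is defined, identify it with $\h^\Pi(-;\lambda,0)$ for a suitable $\lambda$, and invoke Equation~\eqref{Ehar} to conclude $V\cong\pihat_{\lambda,0}$. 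You instead manipulate the closed submodules $A,B,\lie H'_\xi(0),\lie H''_\xi(0)$ directly to exhibit a surjection $V\twoheadrightarrow\pihat_{\lambda,0}$ between irreducibles. Your variant is a touch more concrete and avoids having to argue separately that $\eta_{V,\vg(0)}$ lands in the image of $\h^\Pi(-;-,0)$; the $\Omega$-homomorphism route, on the other hand, is what makes the general philosophy of \cite{PRV2} transparent and is what extends more readily beyond the class zero case. You already note this alternative in your final paragraph, so nothing is missing.
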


\begin{ex}
If $\lambda \in \Lambda^+$, then $\chi_{\pihat_{\lambda,0}} =
\chi(\lambda, w_\circ * (-\lambda - 2\rho)) = \chi(\lambda, -w_\circ
\lambda) = \chi_{V(\lambda) \otimes V(\lambda)^*}$. By Theorem
\ref{Tprv2}, the minimal type of $V(\lambda) \otimes V(\lambda)^*$ is
$\lambda + w_\circ(- w_\circ \lambda) = 0$ as well. Therefore
$\pihat_{\lambda,0} \cong V(\lambda) \otimes V(\lambda)^*$ by the above
result. Moreover, every finite-dimensional $\pihat_{\lambda,0}$ is of
this kind, by Corollary \ref{Cprv}.
\end{ex}

We now take a closer look at the multiplicities. When is
$[\pihat_{\lambda,0} : \vg(\mu)] = \dim \vg(\mu)_0$? (That it is at most
$\dim \vg(\mu)_0$ follows from Theorem \ref{Thar}.) More generally, is it
possible to compute the multiplicities for class zero modules? Once
again, the authors were able to achieve this goal in \cite{PRV2}: the
multiplicity equals the rank of a related matrix, which is defined
similar to $\bfk'_\mu$ above.
Namely, given $\mu \in \Lambda^+$ such that $d_\mu > 0$, choose sets of
homogeneous generators $\{ M_1, \dots, M_{d_\mu} \}$ and $\{ M^*_1,
\dots, M^*_{d_\mu} \}$ for the free $(\Sym \lie{g})^{G_0}$-modules
$\scrl^\mu$ and $\scrl^{-w_\circ \mu}$ respectively. Now choose dual
bases $\{ v_k \}$ and $\{ v_k^* \}$ for $V(\mu), V(-w_\circ \mu) \cong
V(\mu)^*$ respectively. The span of
\[ z_{ij} := \sum_k (\bfl(M^*_i) v^*_k)(\bfl(M_j) v_k) \]

\noindent then depends only on $M_i$ and $M_j$. One can now compute the
sought-for multiplicities.

\begin{theorem}[\cite{PRV2}]
For all $\mu \in \Lambda^+$ and $1 \leq i,j \leq d_\mu$, $z_{ij} \in
Z(\lie{Ug})$. Now given a central character $\chi = \chi(\lambda)$ of
$Z(\lie{Ug})$ (where $\lambda \in \lie{h}^*$),
$[\pihat_{\lambda,0} : \vg(\mu)] = {\bf 1}_{d_\mu > 0} \cdot {\rm rank}
((\ \chi(\lambda)(z_{ij})\ ))$.
\end{theorem}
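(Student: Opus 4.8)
The plan is to treat the two assertions in turn. The first, $z_{ij}\in Z(\lie{Ug})$, is a short invariant‑theoretic remark: by Kostant's results recalled above, $\bfl(M_j)\in\scrlug{\mu}=\Hom_{\lie{g}}(\vg(\mu),\lie{Ug})$ and $\bfl(M^*_i)\in\scrlug{-w_\circ\mu}=\Hom_{\lie{g}}(\vg(-w_\circ\mu),\lie{Ug})$ are $\ad\lie{g}$-equivariant, and multiplication $\lie{Ug}\otimes\lie{Ug}\to\lie{Ug}$ is $\ad\lie{g}$-equivariant; hence the composite
\[ \vg(\mu)^*\otimes\vg(\mu)\;\cong\;\vg(-w_\circ\mu)\otimes\vg(\mu)\;\xrightarrow{\ \bfl(M^*_i)\otimes\bfl(M_j)\ }\;\lie{Ug}\otimes\lie{Ug}\;\longrightarrow\;\lie{Ug} \]
is $\ad\lie{g}$-equivariant, and $z_{ij}$ is the image of the canonical ($\lie{g}$-invariant) element $\sum_k v^*_k\otimes v_k$. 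Therefore $z_{ij}\in(\lie{Ug})^{G_0}=Z(\lie{Ug})$; since $\sum_k v^*_k\otimes v_k$ does not depend on the chosen dual bases, neither does $z_{ij}$, so it depends only on $M^*_i$ and $M_j$.

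For the multiplicity formula, first dispose of the trivial case: if $d_\mu=0$ then $\vg(\mu)_0=0$, so $[\pi_{\xi,0}:\vg(\mu)]=0$ by Theorem~\ref{Thar}(1), whence $[\pihat_{\lambda,0}:\vg(\mu)]=0$ (a subquotient of $\pi_{\xi,0}$) and the asserted identity reads $0=0$. Assume $d_\mu>0$, set $r:=[\pihat_{\lambda,0}:\vg(\mu)]$, and recall $r\le d_\mu$ from Theorem~\ref{Thar}(1). The heart of the argument is the case $\lambda\in\Lambda^+$. Here Corollary~\ref{Cprv} (equivalently, the example following Theorem~\ref{Tprv5}) identifies $\pihat_{\lambda,0}\cong\vg(\lambda)\otimes\vg(\lambda)^*=\End_\C\vg(\lambda)$ as $\lie{g}\times\lie{g}$-modules, with $\overline{\lie{g}}$ acting by conjugation and $\one{Z(\lie{Ug})}$ by the scalar $\chi(\lambda)$ (left multiplication on $\vg(\lambda)$). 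Let $q:\lie{Ug}\to\End_\C\vg(\lambda)$ be the (surjective, $\ad\lie{g}$-equivariant) representation map. By Kostant's separation of variables the $\vg(\mu)$-isotypic component of $\lie{Ug}$ equals $\sum_{j=1}^{d_\mu}Z(\lie{Ug})\cdot\bfl(M_j)(\vg(\mu))$; applying $q$, under which $Z(\lie{Ug})$ acts by $\chi(\lambda)$, shows that the $\vg(\mu)$-isotypic component of $\End_\C\vg(\lambda)$ is spanned by the $d_\mu$ submodules $\phi_j(\vg(\mu))$ with $\phi_j:=q\circ\bfl(M_j)$, and likewise the $\vg(-w_\circ\mu)$-isotypic component is spanned by the $\psi_i(\vg(-w_\circ\mu))$, $\psi_i:=q\circ\bfl(M^*_i)$. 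The trace form $(A,B)\mapsto{\rm tr}_{\vg(\lambda)}(AB)$ on $\End_\C\vg(\lambda)$ is nondegenerate, symmetric and $\ad\lie{g}$-invariant; it annihilates any pair of isotypic components of non-dual types, hence by nondegeneracy restricts to a perfect pairing between the $\vg(\mu)$- and $\vg(-w_\circ\mu)$-isotypic components (both of dimension $r\dim\vg(\mu)$). Viewing the $\phi_j$ (resp. $\psi_i$) as $d_\mu$ vectors in the $r$-dimensional space $\Hom_{\lie{g}}(\vg(\mu),\End_\C\vg(\lambda))$ (resp. its counterpart), they span it, so the $d_\mu\times d_\mu$ Gram matrix $(c_{ij})$ of the pairing between the spanning copies factors as $\Psi^{\mathrm T}B\,\Phi$ with $\Phi,\Psi$ of rank $r$ and $B$ invertible, giving ${\rm rank}(c_{ij})=r$. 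Finally, summing over dual bases collapses $(c_{ij})$ to $z_{ij}$:
\[ \sum_k{\rm tr}_{\vg(\lambda)}\!\big(\psi_i(v^*_k)\,\phi_j(v_k)\big)\;=\;{\rm tr}_{\vg(\lambda)}\!\big(q(z_{ij})\big)\;=\;\chi(\lambda)(z_{ij})\cdot\dim\vg(\lambda), \]
because $z_{ij}$ is central; comparing this (using invariance, ${\rm tr}(AB)={\rm tr}(BA)$, and $\sum_k\langle v_k,v^*_k\rangle=\dim\vg(\mu)$) with the same sum computed through the proportionality $P(\phi_j(v),\psi_i(v^*))=c_{ij}\langle v,v^*\rangle$ yields $(c_{ij})=\frac{\dim\vg(\lambda)}{\dim\vg(\mu)}(\chi(\lambda)(z_{ij}))$, so $r={\rm rank}((\chi(\lambda)(z_{ij})))$.

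For general $\lambda\in\lie{h}^*$ there is no finite-dimensional model, and a bare Zariski-density argument is not available: ${\rm rank}((\chi(\lambda)(z_{ij})))$ is only lower semicontinuous in $\chi(\lambda)$ and $[\pihat_{\lambda,0}:\vg(\mu)]$ may drop on proper subvarieties, so agreement on the dense set $\{\chi(\lambda):\lambda\in\Lambda^+\}$ does not by itself force equality. The plan is instead to carry out the same computation inside the Hilbert-space subquotient: the $d_\mu$ copies of $\vg(\mu)$ in the principal series $\pi_{\xi,0}$ are again produced by the operators $\bfl(M_j)$ applied to the minimal type (on which $\one{Z(\lie{Ug})}$ still acts by $\chi(\lambda)$, as in the proof of Theorem~\ref{Tchar}); the multiplicity surviving in $\pihat_{\lambda,0}=\lie{H}'_\xi(0)/\lie{H}''_\xi(0)$ equals the rank of the induced pairing on this $d_\mu$-dimensional ``copy space'', once one knows that $\lie{H}''_\xi(0)$ is precisely the radical of the natural invariant form on $\lie{H}'_\xi(0)$; and that pairing is identified, up to a single nonzero scalar, with $((\chi(\lambda)(z_{ij})))$ via the structural lemmas of Kostant \cite{Kos2} used in \cite[Section~3]{PRV2} (the description of $(\lie{Ug})_0(\nu)$ and of the zero-weight spaces of $\scrl^\mu$ and $\scrlug{\mu}$). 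The main obstacle is exactly this last step for non-dominant $\lambda$: proving that the relevant form on $\lie{H}'_\xi(0)$ has radical $\lie{H}''_\xi(0)$, and that its Gram matrix in the $\bfl(M_j)$-coordinates is a scalar multiple of $((\chi(\lambda)(z_{ij})))$ — equivalently, that it depends on $\lambda$ only through the central character $\chi(\lambda)$. This is where Kostant's separation-of-variables machinery, rather than merely its formal consequences, is needed.
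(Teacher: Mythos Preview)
Your argument for $z_{ij}\in Z(\lie{Ug})$ is clean and correct, and your treatment of the dominant case $\lambda\in\Lambda^+$ via the trace form on $\End_\C\vg(\lambda)$ is both correct and elegant: the surjectivity of $q$, Kostant's separation of variables, and the perfect pairing of dual isotypic components combine exactly as you say to give $r=\operatorname{rank}((\chi(\lambda)(z_{ij})))$.

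The gap is where you yourself locate it. For general $\lambda\in\lie{h}^*$ you do not prove the result; you sketch a strategy and then explicitly flag the unfinished step (``The main obstacle is exactly this last step\dots''). Two concrete difficulties remain. First, you invoke ``the natural invariant form on $\lie{H}'_\xi(0)$'' and assert that its radical is $\lie{H}''_\xi(0)$, but you neither construct this form nor prove this property; for principal series at non-unitary parameters this is a genuine theorem, not a formality. Second, even granting such a form, you need its Gram matrix on the $\bfl(M_j)$-copies to depend on $\lambda$ only through $\chi(\lambda)$ and to coincide (up to a nonzero scalar) with $((\chi(\lambda)(z_{ij})))$; in the dominant case this fell out of the identity $q(z_{ij})=\chi(\lambda)(z_{ij})\cdot\mathrm{id}$ on a \emph{finite-dimensional} space with a trace, but on the infinite-dimensional $\pi_{\xi,0}$ there is no trace available and the analogous identification has to be supplied by hand. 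As you correctly note, a Zariski-density argument cannot bridge this: both sides are only lower semicontinuous and can (and do) drop on proper subvarieties.

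The approach in \cite{PRV2} is organized differently and avoids this obstacle. Rather than passing through an invariant form on the Hilbert space, the authors work uniformly in $\lambda$ via the algebraic machinery of \S3: the Harish-Chandra projection $\beta^\Pi$, the matrices $\bfk'_\mu$, and the description of $(\lie{Ug})_0(\nu)$ and the rings $\scrr_{\nu,\Pi}$. The multiplicity $[\pihat_{\lambda,0}:\vg(\mu)]$ is computed directly from the action of $\Omega$ on the $\vg(\mu)$-isotypic piece (controlled by the image $\scrr_{0,\Pi}$ and the polynomials $\h^\Pi(-;\lambda,0)$), and the identification with $\operatorname{rank}((\chi(\lambda)(z_{ij})))$ then comes from the structural lemmas relating $\beta^\Pi\circ\bfl$ on zero-weight spaces to the center --- the same lemmas that underlie the previous Proposition on $\operatorname{rank}\bfk'_\mu(\nu)$. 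Your trace-form argument recovers the dominant case more transparently, but to complete the proof you would need either to carry out the invariant-form program you outline (which amounts to reproving a chunk of \cite[\S4]{PRV2}), or to switch to the $\bfk'_\mu$/$\scrr_{0,\Pi}$ route.
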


The next result discusses the case when the multiplicities all attain
their upper bounds. This turns out to be an important question from the
point of view of the irreducibility of the induced representations
$\pi_{\xi,\nu}$ discussed earlier; see also \cite{Bru}. The following
result completely answers this question. (See \cite{Du2} for more results
along these lines.)

\begin{theorem}[\cite{PRV2}]
Given $\lambda \in \lie{h}^*$, the following are equivalent:
\begin{enumerate}
\item $\pihat_{\lambda,0}$ is {\em complete}, i.e., $[\pihat_{\lambda,0}
: \vg(\mu)] = \dim \vg(\mu)_0$ whenever $d_\mu > 0$ for $\mu \in
\Lambda^+$.

\item $\pi_{2(\lambda + \rho), 0}$ is irreducible.

\item The matrices $\bfk'_{-w_\circ \mu}(\lambda)$ and $\bfk'_\mu(w_\circ
* \lambda)$ are both invertible whenever $d_\mu > 0$ for $\mu \in
\Lambda^+$.

\item For all roots $\alpha \in R^+$, $\frac{1}{2} \xi(h_\alpha) =
(\lambda + \rho)(h_\alpha) \notin \Z \setminus \{ 0 \}$.
\end{enumerate}
\end{theorem}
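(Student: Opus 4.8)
The plan is to prove the chain of equivalences $(1) \Leftrightarrow (2) \Leftrightarrow (3) \Leftrightarrow (4)$ by going around a cycle, using the dictionary already built in the excerpt between multiplicities in $\pihat_{\lambda,0}$, ranks of the PRV matrices $\bfk'_\mu$, and the reducibility of the induced module $\pi_{\xi,\nu}$ with $\xi = 2(\lambda+\rho)$ and $\nu = 0$. First I would settle $(1) \Leftrightarrow (3)$: this is essentially immediate from the preceding Proposition, which states that for $d_\mu > 0$ and $\lambda \in \lie{h}^*$ one has $[\pihat_{\lambda,0} : \vg(\mu)] \leq \min(\operatorname{rank} \bfk'_{-w_\circ\mu}(\lambda),\ \operatorname{rank}\bfk'_\mu(w_\circ * \lambda))$, together with the observation that the target of this bound is $\dim V^+(\mu;0,-w_\circ\nu)$ evaluated at the relevant weights, which by Theorem \ref{Tprv1} equals $\dim \vg(\mu)_0 = d_\mu$ precisely when the square matrix $\bfk'_\mu(\cdot)$ has full rank. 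So $(1)$ — all multiplicities attain $\dim\vg(\mu)_0$ — forces both $\bfk'$-matrices to be invertible, and conversely full rank forces equality in the bound. (One must check the bound is actually an equality and not merely $\leq$ when both ranks are maximal; this uses the rank formula $\operatorname{rank}\bfk'_\mu(\nu) = \dim V^+(\mu;0,-w_\circ\nu)$ from the Proposition and the identification of the upper bound.)

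Next I would handle $(3) \Leftrightarrow (4)$, which is the computational heart. By the PRV determinant theorem in the excerpt, $\det \bfk'_\mu = c'_\mu \prod_{\alpha \in R^+}\prod_{j \geq 1}\{h_\alpha + \rho(h_\alpha) - 1, j\}^{m_{j,\mu}(\alpha)}$, where $\{a,j\} = (-1)^j j!\, a(a-1)\cdots(a-j+1)$. Thus $\bfk'_\mu(w_\circ * \lambda)$ is singular iff $(w_\circ*\lambda)$ kills one of the factors $\{h_\alpha + \rho(h_\alpha) - 1, j\}$ with $m_{j,\mu}(\alpha) > 0$, i.e.\ iff $(w_\circ * \lambda)(h_\alpha) + \rho(h_\alpha) - 1 \in \{0, 1, \dots, j-1\}$ for some such $j$, equivalently $(w_\circ * \lambda)(h_\alpha) + \rho(h_\alpha) \in \{1, \dots, j\}$, i.e.\ $(w_\circ*\lambda + \rho)(h_\alpha)$ is a positive integer not exceeding the largest $j$ with $m_{j,\mu}(\alpha) > 0$. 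Running over all $\mu$ with $d_\mu > 0$ (and similarly over the twin matrices $\bfk'_{-w_\circ\mu}(\lambda)$, which by $w_\circ*(-)$-equivariance of the setup contributes the roots again with $\lambda$ in place of $w_\circ * \lambda$), and using that for every root $\alpha$ there exists $\mu$ with $m_{j,\mu}(\alpha) > 0$ for arbitrarily large $j$ (e.g.\ taking $\mu$ inside a suitable $\operatorname{Sym}^k$ of the adjoint representation, so that $f_\alpha e_\alpha$ on $V(\mu)_0$ realizes the eigenvalue $j(j+1)$), one sees the union of forbidden values over all $\mu$ is exactly $\{(\lambda+\rho)(h_\alpha) \in \Z_{>0}\} \cup \{(w_\circ*\lambda+\rho)(h_\alpha) \in \Z_{>0}\}$ for all $\alpha \in R^+$. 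Finally, using $w_\circ * \lambda + \rho = w_\circ(\lambda + \rho)$ and that $w_\circ$ permutes $R^+$ up to sign (so $h_\alpha \mapsto -h_{-w_\circ\alpha}$), the two conditions merge into the single symmetric statement: $(\lambda+\rho)(h_\alpha) \notin \Z \setminus \{0\}$ for all $\alpha \in R^+$, which is exactly $(4)$ (and $\tfrac12\xi(h_\alpha) = (\lambda+\rho)(h_\alpha)$ by $\xi = 2(\lambda+\rho)$).

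For $(2) \Leftrightarrow (1)$ I would invoke the structural facts about $\pi_{\xi,\nu}$ and $\pihat_{\lambda,\nu}$ recorded earlier: from the bulleted properties of $\pihat_{\lambda,\nu}$, $\pi_{\xi,\nu}$ is irreducible iff $\pihat_{\lambda,\nu} \cong \pi_{\xi,\nu}$ iff $[\pihat_{\lambda,\nu} : \vg(\mu)] = \dim \vg(\mu)_\nu$ for all $\mu \in \Lambda^+$. Specializing to $\nu = 0$ and $\xi = 2(\lambda+\rho)$ (so $\lambda = \tfrac12(\xi+0) - \rho$, consistent with the normalization in the construction of $\pihat_{\lambda,\nu}$), this says $\pi_{2(\lambda+\rho),0}$ is irreducible iff $[\pihat_{\lambda,0} : \vg(\mu)] = \dim \vg(\mu)_0$ for all $\mu$; and since $\dim\vg(\mu)_0 = d_\mu$, the condition for $\mu$ with $d_\mu = 0$ is automatic, leaving exactly $(1)$. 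The main obstacle I anticipate is the surjectivity claim buried in $(3) \Leftrightarrow (4)$: proving that \emph{every} forbidden linear factor $h_\alpha + \rho(h_\alpha) - j$ genuinely occurs in some $\det \bfk'_\mu$ with $d_\mu > 0$ and with arbitrarily large $j$, so that no positive-integer value of $(\lambda+\rho)(h_\alpha)$ escapes detection. This requires knowing the spectrum of $f_\alpha e_\alpha$ on zero-weight spaces of enough $\vg(\mu)$ — concretely, that the multiplicities $m_{j,\mu}(\alpha)$ are nonzero for a cofinal set of $j$ as $\mu$ ranges over $\Lambda^+$ — which one extracts from $\lie{sl}_2$-theory applied to the $\alpha$-copy of $\lie{sl}_2$ acting on $\vg(\mu)$ for $\mu$ a large multiple of a fundamental weight, together with the formula $\sum_i q_i(\mu) = \sum_{\alpha}m_\mu(\alpha)$ and Kostant's description of $\scrl^\mu$. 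I would treat this as the one genuinely non-formal input and cite \cite{PRV2} for the case analysis.
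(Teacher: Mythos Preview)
The paper does not give a proof of this theorem; it is an expository article and the result is simply stated and attributed to \cite{PRV2}. So there is no ``paper's own proof'' to compare against, and your proposal must be judged on its internal merits against the machinery the exposition has set up.

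Your argument for $(1)\Leftrightarrow(2)$ is correct and immediate from the bulleted properties of $\pihat_{\lambda,\nu}$ recorded earlier. Your treatment of $(3)\Leftrightarrow(4)$ via the factorisation of $\det\bfk'_\mu$ is the right idea, and you correctly flag the one genuinely nontrivial point: that as $\mu$ ranges over $\Lambda^+$ with $d_\mu>0$, the integers $j$ with $m_{j,\mu}(\alpha)>0$ are unbounded for each $\alpha$, so that every nonzero integer value of $(\lambda+\rho)(h_\alpha)$ is detected by some determinant. Your $\lie{sl}_2$-argument for this (take $\mu$ containing an $\alpha$-string through $0$ of arbitrary length) is essentially what one does.

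The real gap is in $(3)\Rightarrow(1)$. The Proposition you invoke gives only the \emph{upper} bound
\[
[\pihat_{\lambda,0}:\vg(\mu)] \;\leq\; \min\bigl(\operatorname{rank}\bfk'_{-w_\circ\mu}(\lambda),\ \operatorname{rank}\bfk'_\mu(w_\circ*\lambda)\bigr),
\]
so invertibility of both matrices tells you the multiplicity is $\leq d_\mu$, which you already knew from Theorem~\ref{Thar}. You try to close this by appealing to the rank formula $\operatorname{rank}\bfk'_\mu(\nu)=\dim V^+(\mu;0,-w_\circ\nu)$, but that formula is stated for $\nu\in\Lambda^+$ and computes a tensor-product multiplicity, not the multiplicity in $\pihat_{\lambda,0}$ for general $\lambda\in\lie{h}^*$; it does not supply the missing lower bound. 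To get equality one needs either the exact multiplicity formula $[\pihat_{\lambda,0}:\vg(\mu)]=\operatorname{rank}\bigl(\chi(\lambda)(z_{ij})\bigr)$ stated just before this theorem (together with a comparison of that matrix with the $\bfk'$-matrices, which the exposition does not carry out), or an independent argument that $(4)\Rightarrow(2)$ --- e.g.\ Bruhat's irreducibility criterion for principal series, cf.\ \cite{Bru} --- after which $(2)\Rightarrow(1)$ closes the cycle. As written, your cycle $(1)\Leftrightarrow(3)$ is only a one-way implication $(1)\Rightarrow(3)$.
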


This result now holds when $\xi$ attains purely imaginary values on
$\lie{h}_0$ (i.e., $\xi \in \lie{R} \setminus \{ 0 \}$, in the notation
of Theorem \ref{Thar}). This shows the irreducibility of a class of
unitary representations that was studied previously in the complex
semisimple case:

\begin{theorem}[\cite{PRV2}]
The unitary $G$-representations of the principal nondegenerate series (of
Gelfand and Naimark \cite{GN}) that contain a nonzero $K$-invariant
vector, are all irreducible.
\end{theorem}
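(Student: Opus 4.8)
The plan is to read this off from the equivalence theorem just established, once the representations in question have been identified. First I would recall that, in the realization used throughout this section, every principal nondegenerate series representation of $G$ is equivalent to some $\pi_{\xi,\nu}$ acting on $\lie{H}(\nu) \subset L^2(K,\C,\mu)$. Among these, the ones admitting a nonzero $K$-invariant vector are precisely those with $\nu = 0$: the $K$-fixed vectors of $\pi_{\xi,\nu}$ span its $\vg(0)$-isotypical subspace, and by Theorem~\ref{Thar}(1) one has $[\pi_{\xi,\nu} : \vg(0)] = \dim \vg(0)_\nu$, which equals $1$ when $\nu = 0$ and $0$ otherwise, since $\vg(0)$ is one-dimensional of weight $0$. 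By Theorem~\ref{Thar}(4), unitarity holds exactly when $\xi$ lies in the real subspace $\lie{R}$ of weights taking purely imaginary values on $\lie{h}_0$. So the theorem to be proved amounts to showing that $\pi_{\xi,0}$ is irreducible for every $\xi \in \lie{R}$ (the value $\xi = 0$, excluded in the statement, being equally covered below).

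Next I would invoke the preceding theorem, the four-term equivalence for $\pihat_{\lambda,0}$, applied with $\lambda := \tfrac12\xi - \rho$, so that $\xi = 2(\lambda+\rho)$. Its implication $(4)\Rightarrow(2)$ says that $\pi_{\xi,0} = \pi_{2(\lambda+\rho),0}$ is irreducible as soon as
\[ \tfrac12\,\xi(h_\alpha) = (\lambda+\rho)(h_\alpha) \notin \Z \setminus \{0\} \quad (\forall\, \alpha \in R^+). \]
Hence the whole matter reduces to verifying this purely numerical condition for an arbitrary $\xi \in \lie{R}$.

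For that verification, the point is that the roots take real values on $\lie{h}_0$; equivalently, each coroot $h_\alpha$, being an $\R$-combination of the simple coroots $h_i$, lies in $\lie{h}_0$. This is precisely the feature of the setup under which $M = \exp(\sqrt{-1} \cdot \lie{h}_0)$ is a compact torus in $K$ and $\nu \mapsto \sigma_\nu$ is an isomorphism $(\Lambda,+) \to \widehat{M}$. Consequently, for $\xi \in \lie{R}$ we get $\xi(h_\alpha) \in \sqrt{-1}\,\R$ for every $\alpha \in R^+$, so $\tfrac12\,\xi(h_\alpha)$ is purely imaginary, and in particular never a nonzero integer. Thus condition $(4)$ holds automatically for every $\xi \in \lie{R}$, so each $\pi_{\xi,0}$ is irreducible; together with the first paragraph this proves that every principal nondegenerate series representation of $G$ possessing a nonzero $K$-invariant vector is irreducible.

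I expect the one delicate point to be bookkeeping rather than analysis: matching Gelfand and Naimark's parametrization \cite{GN} of the principal nondegenerate series with the parameters $(\xi,\nu)$ of $\pi_{\xi,\nu}$, and confirming that the real form $\lie{R}$ carrying the unitary parameters is the one in which the coroots are real, so that ``imaginary on $\lie{h}_0$'' genuinely forces $\tfrac12\,\xi(h_\alpha)$ off the nonzero integers. The substantive input --- the four-term equivalence of the preceding theorem, and in particular the equivalence of irreducibility of $\pi_{\xi,0}$ with the nonvanishing of the PRV determinants $\det \bfk'_{-w_\circ \mu}(\lambda)$ and $\det \bfk'_\mu(w_\circ * \lambda)$ for all $\mu \in \Lambda^+$ with $d_\mu > 0$, and with completeness of $\pihat_{\lambda,0}$ via Theorem~\ref{Thar} --- has already been established, so no new hard work is required here.
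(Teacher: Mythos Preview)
Your proposal is correct and follows essentially the same route as the paper: the paper simply observes that when $\xi \in \lie{R}$ (purely imaginary on $\lie{h}_0$), condition (4) of the preceding four-term equivalence is automatically satisfied, whence $\pi_{\xi,0}$ is irreducible. Your write-up fleshes out the identification of the class-zero condition with $\nu=0$ via Theorem~\ref{Thar}(1) and makes the imaginary-values argument explicit, but the logical skeleton is identical.
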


\section{Conclusion: The classification of irreducible Harish-Chandra
modules}\label{Sconc}


As discussed in previous sections, the paper \cite{PRV2} has led to much
research in several different directions in representation theory. In
this final section, we return to its original motivations. As is evident
from the paper, as well as from much of the contemporary literature, the
fundamental and profound work of Harish-Chandra on semisimple (real) Lie
groups has had an enormous influence on the field of representation
theory. From the objects studied to the methods employed in \cite{PRV2},
the authors have time and again used contributions of Harish-Chandra to
the subject.

We now mention some of the subsequent developments in the program started
by Harish-Chandra, of studying $K$-admissible $G$-representations. For
instance, various results from \cite{Har2,Har3,PRV2} were subsequently
generalized by Lepowsky in \cite{Le}. Moreover, in \cite{Zh1,Zh2},
Zhelobenko classified irreducible admissible representations of complex
semisimple Lie groups, by showing that they always arise as distinguished
quotients of certain principal series representations. This
classification is the subject of this section.

Before moving on to these classification results, we remark that this
program was extended by Langlands in \cite{La}, to the original setting
of real semisimple Lie groups $G_\R$, where Harish-Chandra had introduced
and studied admissible representations. The {\it Langlands
classification} describes how irreducible admissible representations are
quotients of ``generalized principal series", which are induced from
tempered representations on parabolic subgroups of $G_\R$. The work of
Langlands and Harish-Chandra on tempered representations was refined by
Knapp and Zuckerman \cite{KZ}; thus, one now has an explicit
parametrization of the irreducible admissible representations of the
groups $G_\R$. (See also \cite{BB}, which studies more general
irreducible representations than just the admissible ones.)
As this suggests, the legacy of Harish-Chandra is vast and rich, and
lives on in these works and in the subsequent research which it has
inspired.

\subsection{The set of irreducible objects}

We now discuss the classification of all irreducible objects of
$\scrc(\lie{g} \times \lie{g}, \overline{\lie{g}})$. There are two parts
to this discussion: first, to determine a representative set of simple
objects that covers all isomorphism classes; and second, to determine the
equivalences among the objects in this set. In what follows, the final
results will be stated as they appear in Duflo's notes \cite{Du2} on the
subject.

It turns out that the category $\scrc(\lie{g} \times \lie{g},
\overline{\lie{g}})$ is equivalent to a subcategory of the BGG Category
$\calo$. In particular, Harish-Chandra modules have certain properties in
common with Verma modules. For instance, all objects of this category
have finite length, all simple objects have a corresponding central
character, and for a given central character, the simple objects are
indexed by the Weyl group.
More precisely, Beilinson and Bernstein have classified all irreducible
$(\lie{g} \times \lie{g}, \overline{\lie{g}})$-modules with a fixed
infinitesimal character. Here is a special case of their results.

\begin{theorem}[\cite{BB}]\label{Tbbkk}
Given $\lambda, \mu \in \Lambda^+$, the set of isomorphism classes of
irreducible admissible $(\lie{g} \times \lie{g},
\overline{\lie{g}})$-modules with infinitesimal character $\chi =
\chi(\lambda, \mu)$ is in bijection with $W_\lambda \backslash W /
W_\mu$.
\end{theorem}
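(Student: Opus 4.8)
The plan is to convert the representation-theoretic classification into a geometric one by Beilinson--Bernstein localization, and then to read off the parametrizing set from the diagonal orbits on a product of flag varieties. Write $X := G/B$ for the flag variety of $\lie{g}$; then $X \times X$ is the flag variety of $\lie{g} \times \lie{g}$, and under the identification $\overline{\lie{g}} = \lie{k}^\C$ of a previous section the diagonal subgroup $\overline{G} \subset G \times G$ acts on $X \times X$ diagonally. The elementary geometric input is the Bruhat decomposition: the $\overline{G}$-orbits on $X \times X$ are precisely the relative-position orbits $Y_w$, one for each $w \in W$, with $\dim Y_w = \dim X + \ell(w)$, and the isotropy group of a point of $Y_w$ is the \emph{connected} solvable group $B \cap wBw^{-1}$ (it contains the maximal torus and has connected unipotent radical).

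First I would invoke Beilinson--Bernstein localization at the \emph{dominant} representatives $(\lambda,\mu) \in \Lambda^+ \times \Lambda^+$ of the $(W \times W)$-orbit defining $\chi = \chi(\lambda,\mu)$. Since $(\lambda,\mu)$ is dominant, the global sections functor $\Gamma$ from $(\lambda,\mu)$-twisted $\cald$-modules on $X \times X$ to $\lie{U}(\lie{g}\times\lie{g})$-modules with infinitesimal character $\chi$ is exact, and it induces a bijection between simple objects on the module side and those simple twisted $\cald$-modules $\mathcal M$ with $\Gamma(\mathcal M) \ne 0$. Under this dictionary, the admissibility condition (being a Harish-Chandra module for $\overline{\lie{g}}$) is exactly $\overline{G}$-equivariance. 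Now a simple $\overline{G}$-equivariant twisted $\cald$-module is the intermediate extension of an irreducible $\overline{G}$-equivariant twisted connection supported on a single orbit $Y_w$; because the isotropy group $B \cap wBw^{-1}$ is connected there is \emph{at most one} such connection on each $Y_w$, and because $\lambda,\mu$ are \emph{integral} the twist restricts to an honest $\overline{G}$-equivariant structure, so it does exist. Hence the simple $\overline{G}$-equivariant $(\lambda,\mu)$-twisted $\cald$-modules on $X \times X$ are indexed by $w \in W$; call them $\mathcal M_w$.

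The remaining point, which carries the real content, is to decide for which $w$ one has $\Gamma(\mathcal M_w) \ne 0$. I claim the answer is: exactly one $w$ in each double coset of $W_\lambda \backslash W / W_\mu$ (say the unique longest element of $W_\lambda w W_\mu$), so that $w \mapsto W_\lambda w W_\mu$ gives the asserted bijection. When $(\lambda,\mu)$ is regular this reduces to the statement that $\Gamma$ is an equivalence and all $|W|$ orbits contribute; in general the vanishing of $\Gamma(\mathcal M_w)$ at a singular dominant parameter is exactly the obstruction measured by the integral Weyl stabilizer $W_{(\lambda,\mu)} = W_\lambda \times W_\mu$, i.e.\ by the kernel of the localization functor, which Beilinson and Bernstein describe explicitly. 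Concretely one can argue by translation functors: translating to the $(\lambda,\mu)$-wall from a regular dominant parameter sends the intermediate extension indexed by $w$ to the one indexed by $w$ when $w$ is longest in $W_\lambda w W_\mu$, and to $0$ otherwise, the two wall-crossing functors acting on the index set $W$ through left multiplication by $W_\lambda$ and right multiplication by $W_\mu$ respectively. This comparison at the wall is the main obstacle; the rest is formal once the localization dictionary and the Bruhat description of $\overline{G}$-orbits on $X \times X$ are in place. Finally one checks that this parametrization is consistent with the earlier special cases: for $\lambda,\mu$ regular it recovers the ``$W$ copies'' picture, and for $\lambda = \mu = 0$ it gives a single class (the one containing the trivial representation), in agreement with Corollary \ref{Cequiv} and Theorem \ref{Tchar}.
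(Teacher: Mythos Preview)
The paper does not give its own proof of this theorem: it is stated as a citation of Beilinson--Bernstein \cite{BB} and is used only as background for the classification narrative in the concluding section. So there is no argument in the paper to compare your proposal against; your sketch is in fact an outline of the Beilinson--Bernstein approach itself, which is exactly the source being quoted.

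As to the content of your sketch: the geometric setup is correct. Localization at the dominant integral parameter $(\lambda,\mu)$ does identify irreducible Harish-Chandra modules for $(\lie{g}\times\lie{g},\overline{\lie{g}})$ with simple $\overline{G}$-equivariant twisted $\cald$-modules on $X\times X$ having nonzero global sections, and the Bruhat decomposition together with connectedness of $B\cap wBw^{-1}$ gives exactly one irreducible equivariant local system per orbit $Y_w$. The genuine work, as you correctly flag, is the vanishing statement: identifying which $\mathcal M_w$ satisfy $\Gamma(\mathcal M_w)\neq 0$ at a singular dominant parameter. Your translation-functor heuristic (``collapse within each $W_\lambda\times W_\mu$-coset, survivor is the longest element'') is the right shape, but as written it is a claim rather than an argument; in particular one must check that translation to the wall really acts on the index set $W$ by the two-sided $W_\lambda\times W_\mu$-action and not merely by something coarser, and that the surviving $w$ in each double coset gives a nonzero (not just nonvanishing-in-some-degree) module. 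This is handled in \cite{BB} via their description of the kernel of $\Gamma$ at a singular dominant weight, and a complete proof would have to invoke that description or the intertwining-functor formalism rather than leave it as a plausibility statement.
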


\noindent For instance, if $\lambda, \mu$ are both regular, then there
are exactly $|W|$ isomorphism classes, while there is a single class if
$\lambda$ or $\mu$ is zero.

Now recall Theorem \ref{Tprv5}, which says that all irreducible
admissible class zero modules are of the form $\pihat_{\lambda,0}$ for
some $\lambda \in \lie{h}^*$. One can similarly ask: is every irreducible
admissible $(\lie{g} \times \lie{g}, \overline{\lie{g}})$-module of the
form $\pihat_{\lambda,\nu}$ for some $(\lambda,\nu) \in \lie{h}^* \times
\Lambda$? The answer turns out to be positive.

\begin{theorem}
Suppose $V$ is an irreducible object of $\scrc(\lie{g} \times \lie{g},
\overline{\lie{g}})$. Then there exist $\lambda \in \lie{h}^*$ and $\nu
\in \Lambda$ such that $V \cong \pihat_{\lambda,\nu} \cong \pihat_{w *
\lambda, w \nu}$ for all $w \in W$. In particular, $V$ has minimal type
$\overline{\nu}$ and infinitesimal character $\chi(\lambda, \nu - \lambda
- 2 \rho)$.
\end{theorem}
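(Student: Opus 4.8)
The plan is to deduce the statement from three facts already in place: Harish-Chandra's subquotient theorem, which places $V$ inside a principal series module; the observation, built into the construction of the $\pihat_{\lambda,\nu}$, that among the simple subquotients of a principal series $\pi_{\xi,\nu}$ exactly one carries the $\overline{\lie{g}}$-type $\vg(\overline{\nu})$; and Equation~\eqref{Ehar}, which reduces the isomorphism $V\cong\pihat_{\lambda,\nu}$ to a comparison at a single minimal type.

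First I would record that $V$ admits an infinitesimal character. Being a simple object of $\scrc(\lie{g}\times\lie{g},\overline{\lie{g}})$ --- equivalently a simple $(\lie{g}^\C,\lie{k}^\C)$-module --- it is, via Harish-Chandra's correspondence, the underlying Harish-Chandra module of an irreducible admissible $G$-representation, so that the subquotient theorem applies to it; moreover $Z(\lie{U}(\lie{g}\times\lie{g}))$ is central in $\lie{U}(\lie{g}\times\lie{g})$ and preserves each isotypical subspace $V_\cald$, so it acts locally finitely, and since $V$ is simple it acts by a single character $\chi_V$ (this is the remark from Section 3). By Harish-Chandra's subquotient theorem \cite{Har3}, in the complex case and in the streamlined form worked out in \cite{PRV2} (equivalently, by Zhelobenko's classification \cite{Zh1,Zh2} recounted in \cite{Du2}), $V$ is isomorphic, as a $\lie{g}\times\lie{g}$-module, to a simple subquotient of some principal series module $\pi_{\xi,\nu}$, with $\xi\in\lie{h}^*$, $\nu\in\Lambda$, and with $[V:\vg(\overline{\nu})]>0$; that is, the distinguished $\overline{\lie{g}}$-type of $\pi_{\xi,\nu}$ can be arranged to occur in $V$. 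Securing the realization in precisely this form is where I expect the genuine work to lie: it is exactly the minimal-type analysis that \cite{PRV2} (equivalently Zhelobenko's argument) was designed to carry through, and it is the one ingredient I would quote from the literature rather than re-derive.

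Granting this, the identification is forced. By Theorem~\ref{Thar}(1), $[\pi_{\xi,\nu}:\vg(\overline{\nu})]=1$; and by the very definitions of $\lie{H}''_\xi(\nu)\subset\lie{H}'_\xi(\nu)\subset\pi_{\xi,\nu}$ --- the smallest closed submodule containing the unique copy of $\vg(\overline{\nu})$, and the largest submodule of it meeting that copy trivially --- this copy contributes to the subquotient $\pihat_{\lambda,\nu}=\lie{H}'_\xi(\nu)/\lie{H}''_\xi(\nu)$, with $\lambda=\frac{1}{2}(\xi+\nu)-\rho$, and to no other simple subquotient of $\pi_{\xi,\nu}$: it dies in $\lie{H}''_\xi(\nu)$ and in $\pi_{\xi,\nu}/\lie{H}'_\xi(\nu)$, each of which has $\vg(\overline{\nu})$-multiplicity $0$, while $[\pi_{\xi,\nu}:\vg(\overline{\nu})]=1$ forces every other composition factor to have $\vg(\overline{\nu})$-multiplicity $0$. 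Hence $\pihat_{\lambda,\nu}$ is the unique simple subquotient $S$ of $\pi_{\xi,\nu}$ with $[S:\vg(\overline{\nu})]>0$, and since $[V:\vg(\overline{\nu})]>0$ we conclude $V\cong\pihat_{\lambda,\nu}$. One could equally close through Equation~\eqref{Ehar}: both $V$ and $\pihat_{\lambda,\nu}$ have $\vg(\overline{\nu})$-multiplicity $1$, and being subquotients of the same $\pi_{\xi,\nu}$ that carry the same copy of $\vg(\overline{\nu})$, their key homomorphisms $\eta_{V,\overline{\nu}}$ and $\eta_{\pihat_{\lambda,\nu},\overline{\nu}}$ agree on $\Omega$.

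The remaining assertions follow by applying earlier results to $\pihat_{\lambda,\nu}$: it has minimal type $\overline{\nu}$ with multiplicity $1$ by its construction (so $\overline{\nu}$ is retroactively the minimal type of $V$); $\pihat_{\lambda,\nu}\cong\pihat_{w*\lambda,w\nu}$ for all $w\in W$ is Corollary~\ref{Cequiv}; and its infinitesimal character is $\chi(\lambda,\nu-\lambda-2\rho)$ by Theorem~\ref{Tchar} (which must therefore be the $\chi_V$ above). So the sole real obstacle is the reduction in the second paragraph --- from ``$V$ is a subquotient of some principal series'' to ``$V$ is a subquotient of a principal series whose distinguished minimal type lies in $V$''. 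An alternative route to that point would invoke Theorem~\ref{Tbbkk} (Beilinson--Bernstein) to count the simple modules with infinitesimal character $\chi_V$ and match them against the $\pihat_{\lambda,\nu}$ of that infinitesimal character using \eqref{Ehar}; either way, this matching is the heart of the argument, and everything downstream is bookkeeping with Theorem~\ref{Thar} and the construction of $\pihat_{\lambda,\nu}$.
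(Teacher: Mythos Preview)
The paper does not actually prove this theorem: it is stated in the concluding Section~\ref{Sconc} as a classification result drawn from the literature, with the surrounding text attributing the complex-group classification to Zhelobenko \cite{Zh1,Zh2} and presenting the final statements ``as they appear in Duflo's notes \cite{Du2}''. So there is no proof in the paper to compare against; the theorem is quoted, not derived.

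Your sketch is honest about exactly the right point. The bookkeeping you supply --- that $[\pi_{\xi,\nu}:\vg(\overline{\nu})]=1$ forces a unique simple subquotient to carry $\vg(\overline{\nu})$, and that the remaining assertions then follow from Corollary~\ref{Cequiv} and Theorem~\ref{Tchar} --- is correct and in the spirit of the paper. The substantive step you flag, namely passing from ``$V$ is a subquotient of \emph{some} $\pi_{\xi,\nu}$'' to ``$V$ is a subquotient of a $\pi_{\xi,\nu}$ whose distinguished type $\vg(\overline{\nu})$ actually occurs in $V$'', is indeed the whole content of the theorem and is precisely what Zhelobenko's analysis (or the Langlands-type argument recounted in \cite{Du2}) supplies; your acknowledgment that this must be quoted rather than re-derived matches the paper's own stance. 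One small caution: invoking Theorem~\ref{Tbbkk} as an alternative route is anachronistic relative to \cite{PRV2} and somewhat circular in spirit, since Beilinson--Bernstein already presupposes a classification at least as fine as the one being asserted.
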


\subsection{The objects in a given isomorphism class}

The other aspect of classification is to identify the isomorphism
classes. In light of the above result, the task is to identify when
$\pihat_{\lambda,\nu} \cong \pihat_{\lambda',\nu'}$. In light of
Corollary \ref{Cequiv}, one may assume that $\nu = \nu' \in \Lambda^+$.
Moreover, in light of Corollaries \ref{Cequiv} and \ref{Czero} for
general $\lie{g}$, and Equation \eqref{Esimples} for $\lie{sl}_2(\C)$, it
is easy to guess the general result. This is further reinforced by the
fact that if $\nu,\nu' \in \Lambda$, and $\xi \in \lie{R}_\nu, \xi'
\in\lie{R}_{\nu'}$ (notation as in Theorem \ref{Thar}), then
$\pi_{\xi,\nu}$ is irreducible, hence isomorphic to
$\pihat_{\lambda,\nu}$ (and similarly for $\pi_{\xi',\nu'}$). Now as
observed in \cite{PRV2},
\begin{eqnarray*}
&& \pihat_{\lambda,\nu} \cong \pihat_{\lambda',\nu'} \Longleftrightarrow
\pi_{\xi,\nu} \cong \pi_{\xi',\nu'} \Longleftrightarrow \Theta_{\xi,\nu}
= \Theta_{\xi',\nu'}\\
& \Longleftrightarrow & \exists w \in W : (\xi',\nu') = (w \xi, w \nu)
\Longleftrightarrow \exists w \in W : (\lambda',\nu') = (w * \lambda, w
\nu).
\end{eqnarray*}

\noindent It should not come as a surprise now, that the obvious claim
turns out to be correct:

\begin{theorem}
Given $(\lambda, \nu), (\lambda', \nu') \in \lie{h}^* \times \Lambda$, \[
\pihat_{\lambda,\nu} \cong \pihat_{\lambda',\nu'} \Longleftrightarrow
\exists w \in W : (\lambda',\nu') = (w * \lambda, w \nu). \]
\end{theorem}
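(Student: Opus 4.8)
The backward implication is exactly Corollary~\ref{Cequiv}, so the work is entirely in the forward direction: if $\pihat_{\lambda,\nu}\cong\pihat_{\lambda',\nu'}$ then $(\lambda',\nu')$ lies in the twisted $W$-orbit of $(\lambda,\nu)$. The plan is to reduce this to the equality of distributional characters of the associated principal series representations. First, using Corollary~\ref{Cequiv} one may translate both pairs into a normal form; since the minimal type is an isomorphism invariant (Equation~\eqref{Ehar}), an isomorphism $\pihat_{\lambda,\nu}\cong\pihat_{\lambda',\nu'}$ forces $\overline{\nu}=\overline{\nu'}$, hence $\nu'\in W\nu$, and after replacing $(\lambda',\nu')$ by a twisted $W$-translate we may assume $\nu=\nu'\in\Lambda^+$. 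So it suffices to prove: for $\nu\in\Lambda^+$ and $\lambda,\lambda'\in\lie{h}^*$, $\pihat_{\lambda,\nu}\cong\pihat_{\lambda',\nu}\implies\lambda'\in W_\nu*\lambda$ (noting that $w\nu=\nu$ is then automatic for $w\in W_\nu$).

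Next I would invoke the irreducibility available ``almost everywhere'' from Theorem~\ref{Thar}(4): for $\xi$ in the dense subset $\lie{R}_\nu$ of the real form $\lie{R}$, the induced representation $\pi_{\xi,\nu}$ is irreducible, hence $\pi_{\xi,\nu}\cong\pihat_{\lambda,\nu}$ with $\lambda=\tfrac12(\xi+\nu)-\rho$. On this dense set the chain of equivalences displayed just before the theorem applies: $\pihat_{\lambda,\nu}\cong\pihat_{\lambda',\nu}$ iff $\Theta_{\xi,\nu}=\Theta_{\xi',\nu}$ iff (by Theorem~\ref{Thar}(3)) there is $w\in W$ with $(\xi',\nu)=(w\xi,w\nu)$, i.e.\ $w\in W_\nu$ and $\xi'=w\xi$; translating back via $\lambda\mapsto\tfrac12(\xi+\nu)-\rho$ and using Equation~\eqref{Ehc} (which gives $w*(\tfrac12(\xi+\nu)-\rho)=\tfrac12(w\xi+w\nu)-\rho$ when $w\nu=\nu$), this says $\lambda'=w*\lambda$. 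This settles the theorem for $\xi\in\lie{R}_\nu$.

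The remaining step, and the main obstacle, is to pass from the dense set $\lie{R}_\nu$ to all of $\lie{h}^*$. Here I would use a Zariski-density argument of exactly the kind used repeatedly in the paper (e.g.\ in the proof of Theorem~\ref{Tchar}). By Theorem~\ref{Tchar} the infinitesimal character of $\pihat_{\lambda,\nu}$ is $\chi(\lambda,\nu-\lambda-2\rho)$, so an isomorphism forces $\chi(\lambda,\nu-\lambda-2\rho)=\chi(\lambda',\nu-\lambda'-2\rho)$; by Theorem~\ref{Thc} there is then some $(w_1,w_2)\in W\times W$ with $\lambda'=w_1*\lambda$ and $\nu-\lambda'-2\rho=w_2*(\nu-\lambda-2\rho)$. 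Combining these two relations and again applying \eqref{Ehc}, one gets $w_1\nu=w_2\nu$ modulo a correction, so that the two Weyl elements differ by an element of $W_\nu$ and one may arrange $w_1=w_2=:w$ with $w\nu=\nu$; thus $\lambda'=w*\lambda$ with $w\in W_\nu$, which is the desired conclusion. The delicate point is that the infinitesimal character alone need not determine the representation at non-generic $\lambda$ — there can be several simple modules with the same central character (Theorem~\ref{Tbbkk}). To rule out a ``wrong'' pair $(\lambda',\nu)$ with the same central character but not in $W_\nu*\lambda$, one argues as follows: the $\Omega$-module structure on the minimal type of $\pihat_{\lambda,\nu}$ is $\h^{w_0^{-1}\Pi}(-;\lambda,\nu)$ by Theorem~\ref{Tprv3}, and this is a \emph{polynomial} expression in $(\lambda,\nu)$; by Equation~\eqref{Ehar} two such modules (with the same minimal type $\overline{\nu}$) are isomorphic iff these homomorphisms agree. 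So the set of $(\lambda,\lambda')$ for which $\pihat_{\lambda,\nu}\cong\pihat_{\lambda',\nu}$ is Zariski-closed in $\lie{h}^*\times\lie{h}^*$, as is the set $\{(\lambda,\lambda'):\lambda'\in W_\nu*\lambda\}$; since they agree on the dense set where $\xi\in\lie{R}_\nu$ and the latter is a finite union of (irreducible) affine subspaces, one deduces that the first set is contained in the second, completing the proof. The only real care needed is to check that the ``dense'' set $\{\lambda:\tfrac12(\xi+\nu)-\rho,\ \xi\in\lie{R}_\nu\}$ meets every component of $\{\lambda'\in W_\nu*\lambda\}$ on which an accidental isomorphism could occur — but this follows from $\lie{R}_\nu$ being dense in $\lie{R}$ and $\lie{R}$ spanning $\lie{h}^*$ over $\C$.
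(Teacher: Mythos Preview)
The paper does not prove this theorem: it is stated in Section~\ref{Sconc} as a result from the subsequent literature (Zhelobenko, Duflo; see the remarks citing \cite{Du2,Zh1,Zh2}). The discussion immediately preceding the theorem is exactly your argument on the locus $\xi\in\lie{R}_\nu$, and the paper stops there. So there is no ``paper's own proof'' to compare against; what one can compare is your extension step with the machinery the paper \emph{does} develop.

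Your reduction to $\nu=\nu'\in\Lambda^+$ and the argument for $\xi,\xi'\in\lie{R}_\nu$ via Theorem~\ref{Thar}(3),(4) are correct and coincide with the paper's motivation. The Zariski--density step, however, has a genuine gap. Write $A=\{(\lambda,\lambda'):\h^\Pi(\omega;\lambda,\nu)=\h^\Pi(\omega;\lambda',\nu)\ \forall\,\omega\in\Omega\}$ and $B=\bigcup_{w\in W_\nu}\Gamma_w$ with $\Gamma_w=\{(\lambda,w*\lambda)\}$; by~\eqref{Ehar} and Theorem~\ref{Tprv3}, the forward implication is exactly $A\subset B$. You know $B\subset A$, and from the central character (Theorem~\ref{Tchar}) that $A\subset\bigcup_{w\in W}\Gamma_w$, and you have shown $A\cap(D\times D)=B\cap(D\times D)$ for your dense set $D$. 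But two Zariski--closed sets that agree on a Zariski--dense subset need not be equal: for $w\notin W_\nu$ the intersection $A\cap\Gamma_w$ is a proper closed subvariety of $\Gamma_w$ (your argument does give this much), yet nothing prevents it from containing points $(\lambda_0,w*\lambda_0)$ with $w*\lambda_0\notin W_\nu*\lambda_0$. Your final sentence checks that $D$ meets the components of $B$, but what must be ruled out are extra points of $A$ \emph{outside} $B$ at non--generic $\lambda$, and density of $D$ says nothing about those.

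Within the framework of \cite{PRV2} the intended route is through Theorem~\ref{Trnu}: one shows that the image ring $\scrr_{\nu,\Pi}=\h^\Pi(\Omega;-,\nu)$ is \emph{all} of $\I_\nu=P(\lie{h}^*)^{(W_\nu,*)}$. Since the invariant ring of a finite reflection group separates orbits, the evaluations $\lambda\mapsto\h^\Pi(-;\lambda,\nu)$ then separate $W_\nu*$--orbits, and the forward implication follows at once from~\eqref{Ehar}. The paper records this only for $\nu=0$ and for regular $\nu$ (Corollary~\ref{Czero}), noting in the Remark after Theorem~\ref{Trnu} that the general case was handled in \cite{PRV2} by a case--by--case argument. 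Your density attempt is in effect trying to prove $\scrr_{\nu,\Pi}=\I_\nu$ by an indirect route, but the density available only yields the weaker statement that $\scrr_{\nu,\Pi}$ is not fixed by any $w\notin W_\nu$, which does not suffice.
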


\subsection{Concluding remarks}

We end with a couple of (incomplete) calculations regarding the above
analysis, involving central characters.
\begin{enumerate}
\item It is natural to ask if the central character associated to an
irreducible admissible module $V$, determines its minimal type. Thus,
given that $\chi_V = \chi(\mu_1,\mu_2)$, how does one determine the
minimal type of $V$?

It is clear that if $V \cong \pihat_{\lambda,\nu}$ (from above results),
then
\[ \mu_1 = w_1 * \lambda, \qquad \mu_2 = w_2 * (\nu - \lambda - 2 \rho) =
w_2 \nu - w_2 * \lambda - 2 \rho, \]

\noindent for some $w_1, w_2 \in W$. Now note that
\[ \mu_1 + w_1 w_2^{-1} * \mu_2 = w_1 * \lambda + w_1 \nu - w_1 * \lambda
- 2 \rho = w_1 \nu - 2 \rho. \]

\noindent Hence $\nu_w := 2 \rho + \mu_1 + w * \mu_2 \in \Lambda$ for
some $w \in W$; moreover, for every such $w$, $\overline{\nu_w}$ is a
candidate for the minimal type, by these calculations. Thus, if $w$ is
not uniquely identified from above, then neither is $\overline{\nu}$.

\item Similarly, given $(\lambda, \nu), (\lambda', \nu') \in \lie{h}^*
\times \Lambda$, a necessary condition for $\pihat_{\lambda, \nu}$ to be
isomorphic to $\pihat_{\lambda',\nu'}$ is that their central characters
and minimal types coincide. It is natural to ask if this data is also
sufficient to determine the isomorphism type.

Clearly, in order to have the same minimal type, Corollary \ref{Cequiv}
implies that $\nu' \in W \nu$. Say $\nu' = w_1 \nu$. Now since the
infinitesimal characters coincide, Theorem \ref{Tchar} implies:
\[ \lambda' = w_2 * \lambda, \qquad \nu' - \lambda' - 2 \rho = w * (\nu -
\lambda - 2 \rho) = w \nu - w * \lambda - 2 \rho. \]

\noindent Using these equations translates to the following condition:
\[ w_1 \nu - w_2 * \lambda = w \nu - w * \lambda, \]

\noindent and this data may not have the unique solution: $w_1 = w_2 =
w$.

The reason for this discrepancy is Equation \eqref{Ehar}: the
representation $\pihat_{\lambda,\nu}$ carries the same data as its
minimal type and the action of $\Omega$ on it. The above data only
accounts for the minimal type and the action of the proper subset
$Z(\lie{U}(\lie{g} \times \lie{g})) \subsetneq \Omega$. For instance,
$Z(\lie{U} \overline{\lie{g}})$ is not accounted for.
\end{enumerate}\bigskip

To conclude, we have tried to explain the flavour of some of the results
in \cite{PRV2}, as well as their connection to, and impact on, subsequent
research in a wide variety of directions in the field. From the
multiplicity problem and obtaining components in tensor products of
finite-dimensional modules, to PRV determinants and annihilators of Verma
modules, to the classification of all irreducible admissible modules as
in Harish-Chandra's grand program on semisimple Lie groups - the work
\cite{PRV2} has contributed to, and inspired much subsequent research in,
many aspects of representation theory. The list of results and
connections mentioned in this article is by no means complete, but we
hope that it suffices to convince the reader of the importance and
influence of this work in representation theory.

\subsection*{Acknowledgments}

I would first like to thank Professors C.S.~Rajan and Rajendra Bhatia for
kindly inviting me to write this article. Next, I greatly thank Professor
Shrawan Kumar for his generous help in pointing out several references
and follow-up results in the literature. I would also like to thank very
much Professor Dipendra Prasad, for his time and patience in answering
several of my questions, as well as Professor Vyjayanthi Chari for useful
references and discussions. An excellent historical account of the
writing of \cite{PRV2} can be found in Professor V.S.~Varadarajan's
reminiscences \cite{Var}, and this was of great help in the writing of
the present work as well.

\end{document}